\theoremstyle{plain}
\newtheorem{theorem}{Theorem}[section]
\newtheorem{corollary}[theorem]{Corollary}
\newtheorem{definition}[theorem]{Definition}
\newtheorem{example}[theorem]{Example}
\newtheorem{lemma}[theorem]{Lemma}
\newtheorem{proposition}[theorem]{Proposition}
\newtheorem{remark}[theorem]{Remark}
\numberwithin{equation}  {section}
\begin{document}

\title{Hochschild cohomology of type II$_1$ von Neumann algebras with Property $\Gamma$}

\author{Wenhua Qian}
\address{Wenhua Qian \\
        Demartment of Mathematics \\
         East China University of Science and Technology\\
         Shanghai 200237, China;   Email: whqian86@163.com}
\author{Junhao Shen}
\address{Junhao Shen \\
        Demartment of Mathematics and Statistics \\
         University of New Hampshire\\
         Durham, NH 03824;   Email:  Junhao.Shen@unh.edu}
%\thanks{Partially supported by a grant from the National Science Foundation}

\begin{abstract}
 In this paper, Property $\Gamma$ for a type II$_{1}$ von Neumann
algebra is introduced as a generalization of Murray and von
Neumann's Property $\Gamma$ for a type II$_{1}$ factor. The main
result of this paper is that if a type II$_{1}$ von Neumann algebra
$\mathcal{M}$ with separable predual has Property $\Gamma$, then the
continuous Hochschild cohomology group $H^{k}(\mathcal{M},
\mathcal{M})$ vanishes for every $k \geq 2$. This gives a
generalization of an earlier result in \cite{EFAR2}.

\end{abstract}

\subjclass[2000]{Primary     46L10; Secondary 18G60}
\keywords{Hochschild cohomology, von Neumann algebra, Property $\Gamma$}                           % Activate to display a given date or no date

\maketitle

\section{Introduction}
The continuous Hochschild cohomology of von Neumann algebras was
initialized by Johnson, Kadison and Ringrose in \cite{RJ1},
\cite{RJ2}, \cite{BRJ}, where it was conjectured that the $k$-th
continuous Hochschild cohomology group $H^{k}(\mathcal{M},
\mathcal{M})$ is trivial for any von Neumann algebra $\mathcal{M}$,
$k \geq 1$. In the case $k=1$, this conjecture, which is equivalent
to the problem of whether a derivation of a von Neumann algebra into
itself is inner, had been solved by Kadison and Sakai independently
in \cite{Ka1}, \cite{S1}. In the following we focus on the case when
$k \geq 2$. In \cite{BRJ}, it was shown   that $H^{k}(\mathcal{M},
\mathcal{M})=0$ for $k\ge 2$ if $\mathcal{M}$ is a injective von
Neumann algebra. It follows that if $\mathcal M$ is a type I von
Neumann algebra, then $H^{k}(\mathcal{M}, \mathcal{M})=0$ for $k\ge
2$.

Significant progress was made after the introduction of
completely bounded Hochschild cohomology groups for von Neumann
algebras (\cite{EGA}, \cite{EFAR1}, \cite{EFAR2}, \cite{EA0},
\cite{EA1}, \cite{EA2}, \cite{FR},    \cite{PS}, \cite{AR1},
\cite{AR2}). It was shown in \cite{EA1}, \cite{CS} (see also
\cite{AR3}) that the completely bounded Hochschild cohomology group
$H^{k}_{cb}(\mathcal{M}, \mathcal{M})=0$ for $k \geq 2$. As a consequence of
  results in \cite{EGA}, if $\mathcal M$ is a type II$_\infty$ or type III von
Neumann algebra, then $H^{k}(\mathcal{M}, \mathcal{M})=0$ for $k\ge
2$.  In the case that $\mathcal M$ is a type II$_1$ von Neumann
algebra, many results as listed below have also been obtained.  (We
refer to a wonderful book \cite{AR3} by Sinclair and Smith for a
survey of Hochschild cohomology theory for von Neumann algebras and
proofs of most of the following results.)
\begin{enumerate}
\item [(i)] $H^{k}(\mathcal{M}, \mathcal{M})=0$   for $k \geq 2$ if the
type II$_1$ central summand in the type decomposition
$\mathcal{M}=\mathcal{M}_{1}\oplus \mathcal{M}_{c_{1}} \oplus
\mathcal{M}_{c_{\infty}} \oplus \mathcal{M}_{\infty}$ of the von
Neumann algebra $\mathcal{M}$ satisfies $\mathcal{M}_{c_{1}} \otimes
\mathcal{R} \cong \mathcal{M}_{c_{1}}$, where $\mathcal{R}$ is the
hyperfinite type II$_{1}$ factor  (\cite{EGA}).

\item [(ii)] $H^{k}(\mathcal{M}, \mathcal{M})=0$  for $k \geq 2$  if
$\mathcal{M}$ is a type  II$_{1}$ von Neumann algebra with a Cartan
subalgebra and separable predual (\cite{EFAR1}, \cite{FR}, \cite{AR1}, \cite{AR2}); it was shown later in \cite{Ca} that $H^{k}(\mathcal{M}, \mathcal{M})=0$ if $\mathcal{M}$ is a type II$_{1}$ factor with a Cartan masa.

\item [(iii)] $H^{2}(\mathcal{M}, \mathcal{M})=0$   for $k \geq 2$ if $\mathcal{M}$ is a
type  II$_{1}$ factor satisfying various technical properties
related to its action on $L^{2}(\mathcal{M}, tr)$  (\cite{FR}).

\item [(iv)] $H^{k}(\mathcal{M}, \mathcal{M})=0$  for $k \geq 2$  if
$\mathcal{M}$ is a type  II$_{1}$ factor with Property $\Gamma$
(\cite{EFAR2}).

\item [(v)] $H^{2}(\mathcal{M}_{1}  {\otimes} \mathcal{M}_{2},
\mathcal{M}_{1}  {\otimes} \mathcal{M}_{2})=0$  if both
$\mathcal{M}_{1}$ and $\mathcal{M}_{2}$ are type  II$_{1}$ von
Neumann algebras (\cite{PS}).
\end{enumerate}

A motivation of this paper is to generalize the listed result (iv) in
\cite{EFAR2} for type II$_{1}$ factors with Property $\Gamma$ to
general type II$_{1}$ von Neumann algebras with certain properties.
Recall Murray and von Neumann's Property $\Gamma$ for type II$_{1}$
factors as follows. {\em Suppose $\mathcal{A}$ is a type II$_{1}$
factor with a trace $\tau$. Let $\Vert \cdot \Vert_{2}$ be the
$2$-norm on $\mathcal{A}$ given by $\Vert a
\Vert_{2}=\sqrt{\tau(a^{*}a)}$ for any $a \in \mathcal{A}$. Then
$\mathcal{A}$ has Property $\Gamma$ if, given $\epsilon >0$ and
$a_{1}, a_{2}, \dots, a_{k} \in \mathcal{A}$, there exists a unitary
$u \in \mathcal{A}$ such that

\begin{enumerate}
\item [(a)] $\tau(u)=0$;

\item [(b)] $\Vert ua_{j}-a_{j}u \Vert_{2} < \epsilon, \ \  \forall \  1 \leq j
\leq k$.
\end{enumerate}}
An equivalent definition of Property $\Gamma$ for a type II$_{1}$
factor $\mathcal{A}$ was given by Dixmier in
\cite{Di1}. {\em Suppose $\mathcal{A}$ is a type II$_{1}$ factor with a trace $\tau$. Let
$\Vert \cdot \Vert_{2}$ be the $2$-norm on $\mathcal{A}$ given by
$\Vert a \Vert_{2}=\sqrt{\tau(a^{*}a)}$ for any $a \in
\mathcal{A}$. Then $\mathcal{A}$ has Property $\Gamma$ if, given $n
\in \mathbb{N}$, $\epsilon >0$ and $a_{1}, a_{2}, \dots, a_{k} \in
\mathcal{A}$, there exist $n$ orthogonal equivalent projections $\{
p_{1}, p_{2}, \dots, p_{n} \}$ in $\mathcal{A}$ with sum $I$ such
that
$$\Vert p_{i}a_{j}-a_{j}p_{i} \Vert_{2} < \epsilon, \qquad  \forall \ 1
\leq i \leq n, 1 \leq j \leq k.$$}

In the paper, we extend Dixmier's equivalent definition of Murray
and von Neumann's Property $\Gamma$ to general von Neumann algebras
as follows.

\vspace{0.5cm}

   { Definition \ref{3.1}.} \   {\em  Suppose $\mathcal{M}$ is a type II$_{1}$ von Neumann algebra with a
predual $\mathcal M_{\sharp}$. Suppose that $\sigma (\mathcal M,
\mathcal M_\sharp)$ is the weak-$*$ topology on $\mathcal M$ induced
from $\mathcal M_\sharp$. We say that $\mathcal{M}$ has Property
$\Gamma$ if and only if $ \forall \ a_{1}, a_{2}, \dots, a_{k} \in
\mathcal{M}$ and $\forall \ n\in \Bbb N$, there exist a partially ordered set $\Lambda$ and a family of
projections $$\{ p_{i \lambda}: 1\le i\le n; \lambda \in \Lambda \}\subseteq
\mathcal{M}$$ satisfying
\begin{enumerate}
\item [(i)] For each $\lambda \in \Lambda$,   $\{ p_{1 \lambda}, p_{2 \lambda}, \ldots,
p_{n \lambda} \}$ is a family of orthogonal equivalent projections in
$\mathcal M$ with sum $I$.
\item [(ii)] For each $1\le i\le n$ and $1\le j\le k$,
$$
\lim_{\lambda} (p_{i \lambda}a_{j}-a_{j}p_{i
\lambda})^*(p_{i \lambda}a_{j}-a_{j}p_{i \lambda}) =0\qquad \text {in $\sigma(\mathcal
M, \mathcal M_\sharp)$ topology.}$$
\end{enumerate} }
\vspace{0.5cm}

We note that Definition \ref{3.1} coincides with Dixmier's
definition (and  Murray and von Neumann's definition)  when
$\mathcal{M}$ is a type II$_{1}$ factor (see Corollary \ref{3.3.5}). The
following theorem is our main result of this paper, which gives a
generalization of an earlier result in \cite{EFAR2}.

\vspace{0.5cm}

  {  Theorem  \ref{mainthm}}. \   {\em Suppose $\mathcal{M}$ is a type
II$_{1}$ von Neumann algebra with separable predual. If
$\mathcal{M}$ has Property $\Gamma$, then  the Hochschild cohomology
group
$$H^{k}(\mathcal{M}, \mathcal{M})=0, \qquad \forall \ k\ge 2.$$}

 The proof of Theorem \ref{mainthm} follows the similar line as the
 one in \cite{EFAR2} besides that new tools from direct integral
 theory for von Neumann algebras need to be developed.

 The organization of this paper is as follows. In section 3, we introduce a definition of Property $\Gamma$ for type II$_1$ von Neumann
 algebras.
 In
section 4, by applying the technique of direct integrals to
$\mathcal{M}$, we will construct a hyperfinite subfactor
$\mathcal{R}$ such that the relative commutant of $\mathcal{R}$ is
the center of $\mathcal{M}$ and $\mathcal{R}$ satisfies the
additional property of containing an asymptotically commuting family
of projections for $\mathcal{M}$. In section 5, we will prove a
Grothendick inequality for $\mathcal{R}$-multimodular normal
multilinear maps. Then, in section 6, we combine these results
obtained in section 4 and section 5 to show that for a type II$_{1}$
von Neumann algebra $\mathcal{M}$ with separable predual, if
$\mathcal{M}$ has Property $\Gamma$, then every bounded $k$-linear
$\mathcal{R}$-multimodular separately normal map from
$\mathcal{M}^{k}$ to $\mathcal{M}$ is completely bounded, which
implies the triviality of the cohomology group $H^{k}(\mathcal{M}, \mathcal{M})$ by Theorem 3.1.1 and
Theorem 4.3.1 in \cite{AR3}.

\section{Preliminaries}
\subsection{Hochschild cohomology}
In this subsection, we will recall a definition of continuous
Hochschild cohomology groups (see \cite{AR3}).

Let $\mathcal{M}$ be a von Neumann algebra. We say that a Banach space $\mathcal{X}$ is a Banach $\mathcal{M}$-bimodule if there is a module action of $\mathcal{M}$ on both the left and right of $\mathcal{X}$ satisfying

$$\Vert m \xi \Vert \leq \Vert m \Vert \Vert \xi \Vert$$
and
$$\Vert \xi m \Vert \leq \Vert \xi \Vert \Vert m \Vert$$
for any $m \in \mathcal{M}, \xi \in \mathcal{X}$.

For each integer $k \geq 1$, we denote by $\mathcal{L}^{k}(\mathcal{M},\mathcal{X})$ the Banach space of $k$-linear bounded maps $\phi : \mathcal{M}^{k} \to \mathcal{X}$. For $k=0$, we define $\mathcal{L}^{0}(\mathcal{M}, \mathcal{X})$ to be $\mathcal{X}$. Then we can define coboundary operators $\partial^{k}:\mathcal{L}^{k}(\mathcal{M}, \mathcal{X}) \to \mathcal{L}^{k+1}(\mathcal{M}, \mathcal{X}) $ as follows:
\begin{enumerate}
\item [(i)] when $k \geq 1$,  for any $\phi \in \mathcal{L}^{k}(\mathcal{M}, \mathcal{X}), a_{1}, a_{2}, \dots, a_{k} \in \mathcal{M}$,
$$
\begin{aligned}
\partial^{k} \phi(a_{1}, a_{2}, \dots, a_{k+1})&=a_{1}\phi (a_{2}, \dots, a_{k+1})\\
&+\sum_{i=1}^{k}(-1)^{k}\phi (a_{1}, \dots, a_{i-1}, a_{i}a_{i+1}, \dots, a_{k+1})\\
&+(-1)^{k+1}\phi (a_1, \dots, a_{k})a_{k+1}.
\end{aligned}$$

\item [(ii)] when $k=0$, for any $\xi \in \mathcal{X}, a \in \mathcal{M}$, $\partial^{0}\xi (a)=a \xi -\xi a$.
\end{enumerate}
It's easy to check that $\partial^{k} \partial^{k-1}=0$ for each $k \geq 1$. Thus $Im \partial^{k-1}$(the space of coboundaries) is contained in $Ker \partial^{k}$(the space of cocycles). The continuous Hochschild cohomology groups $H^{k}(\mathcal{M}, \mathcal{X})$ are then defined to be the quotient vector spaces $Ker \partial^{k} / Im \partial^{k-1}, k \geq 1$.

\subsection{Direct integral}
The concepts of direct integrals of separable Hilbert spaces and von
Neumann algebras acting on separable Hilbert spaces were introduced
by von Neumann in \cite{vN}. General knowledge on direct integrals
can be found in \cite{vN}, \cite{KR1}. Here, we list some lemmas
which will be needed in this paper.

\begin{lemma} \label{2.1}
(\cite{KR1}) Suppose $\mathcal{M}$ is a von Neumann algebra acting on
a separable Hilbert space $H$. Let $\mathcal{Z}$ be the center of
$\mathcal{M}$. Then there is a direct integral decomposition of
$\mathcal{M}$ relative to $\mathcal{Z}$, i.e. there exists a locally
compact complete separable metric measure space $(X, \mu)$ such that
\begin{enumerate}
\item [(i)] $H$ is (unitarily equivalent to) the direct integral of $\{ H_{s} : s \in X \}$ over $(X, \mu)$, where each $H_{s}$ is a separable Hilbert space, $s \in X$.
\item [(ii)] $\mathcal{M}$ is (unitarily equivalent to) the direct integral
of $\{ \mathcal{M}_{s} \}$ over $(X, \mu)$, where $\mathcal{M}_{s}$
is a factor in $B(H_{s})$ almost everywhere. Also, if $\mathcal{M}$
is of type $I_{n}$($n$ could be infinite), II$_{1}$, II$_{\infty}$
or $III$, then the components $\mathcal{M}_{s}$ are, almost
everywhere, of type  I$_{n}$, II$_{1}$, II$_{\infty}$ or $III$,
respectively.
\end{enumerate}
Moreover, the center $\mathcal{Z}$ is (unitarily equivalent to) the algebra of diagonalizable operators relative to this decomposition.
\end{lemma}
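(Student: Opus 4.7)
The plan is to execute the classical direct-integral construction, with the center $\mathcal{Z}$ furnishing the base measure space. Since $\mathcal{Z}$ is an abelian von Neumann algebra acting on the separable Hilbert space $H$, it is $*$-isomorphic to some $L^\infty(X,\mu)$; by standard realization theorems for separable abelian von Neumann algebras, $(X,\mu)$ can be taken to be a locally compact complete separable metric measure space. This identification both fixes the candidate parameter space and, by construction, will realize $\mathcal{Z}$ as the algebra of diagonalizable operators once the fiber structure is in place.

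Next I would construct the measurable field of Hilbert spaces. Fix a countable norm-dense sequence $\{\xi_n\} \subseteq H$ and consider the normal sesquilinear forms $z \mapsto \langle z \xi_n, \xi_m\rangle$ on $\mathcal{Z}$; each arises from integration against an $L^1$-function on $(X,\mu)$. A polar-decomposition/Radon--Nikodym argument converts these data into measurable vector fields $s \mapsto \xi_n(s)$ in fibers $H_s$, and an approximation/completion argument yields a unitary equivalence $H \cong \int^\oplus H_s\, d\mu(s)$ under which $\mathcal{Z}$ is the diagonal algebra. Since every $T \in \mathcal{M}$ commutes with $\mathcal{Z}$, $T$ is decomposable; applying this fiberwise to a countable strongly dense $*$-subalgebra of $\mathcal{M}$ and taking fiberwise strong closures produces $\mathcal{M} \cong \int^\oplus \mathcal{M}_s\, d\mu(s)$ as a measurable field of von Neumann algebras.

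It remains to verify that almost every fiber is a factor of the prescribed type. For the factor property I would argue by contradiction: if on a measurable set $E \subseteq X$ of positive measure the center of $\mathcal{M}_s$ contained a nontrivial projection, a measurable selection theorem would furnish a measurable field $s \mapsto p_s$ assembling into a projection $p \in \mathcal{Z}(\mathcal{M}) \setminus \mathcal{Z}$, contradicting the definition of $\mathcal{Z}$. For the preservation of type, each class I$_n$, II$_1$, II$_\infty$, III is characterized by the presence (or absence) of certain systems of projections with prescribed equivalence, finiteness, or comparison relations; such systems live in $\mathcal{M}$ and decompose fiberwise into systems of the same kind, forcing $\mathcal{M}_s$ to have the corresponding type a.e. The main obstacle throughout is the repeated appeal to measurable selection: one needs the Borel/Effros structure on von Neumann algebras acting on a fixed separable Hilbert space in order to turn pointwise statements about fibers into genuinely measurable fields, and this is where the proof becomes technically delicate.
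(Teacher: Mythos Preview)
The paper does not give its own proof of this lemma: it is quoted verbatim as a result from Kadison--Ringrose (\cite{KR1}, Chapter~14), and is simply invoked as background. Your sketch is a faithful outline of the standard construction found there---realizing $\mathcal{Z}\cong L^\infty(X,\mu)$, building the measurable field of Hilbert spaces, decomposing operators commuting with the diagonal algebra, and then using measurable selection to verify that the fibers are factors of the correct type---so there is no discrepancy in approach, only in the level of detail one would need to fill in to make the selection arguments rigorous.
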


The following lemma gives a decomposition of a normal sate on a direct integral of von Neumann algebras.
\begin{lemma} \label{2.2}
(\cite{KR1}) If $H$ is the direct integral of separable Hilbert
spaces $\{ H_{s} \}$ over $(X, \mu)$, $\mathcal{M}$ is a
decomposable von Neumann algebra on $H$ (i.e every operator in $\mathcal{M}$ is decomposable relative to the direct integral decomposition, see Definition 14.1.6 in \cite{KR1}) and $\rho$ is a normal
state on $\mathcal{M}$. There is a positive normal linear functional
$\rho_{s}$ on $\mathcal{M}_{s}$ for every $s \in X$ such that
$\rho(a)=\int_{X} \rho_{s}(a(s))d\mu$ for each $a$ in
$\mathcal{M}$. If $\mathcal{M}$ contains the algebra $\mathcal{C}$
of diagonalizable operators and $\rho \vert_{E\mathcal{M}E}$ is
faithful or tracial, for some projection $E$ in $\mathcal{M}$, then
$\rho_{s} \vert_{E(s)\mathcal{M}_{s}E(s)}$ is, accordingly,
faithful or tracial almost everywhere.
\end{lemma}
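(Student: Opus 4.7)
The plan is to realize $\rho$ as a countable sum of vector states, decompose the defining vectors along the direct integral, and then manufacture the fiberwise functionals by applying the same formula on each $H_s$. The integral identity follows from Fubini, while the preservation of faithfulness and traciality is extracted by testing against diagonalizable central projections.

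First I would invoke the standard fact that every normal state on a von Neumann algebra on $H$ has the form $\rho(a)=\sum_{n=1}^{\infty}\langle a\xi_{n},\xi_{n}\rangle$ for some sequence $\{\xi_{n}\}\subset H$ with $\sum_{n}\|\xi_{n}\|^{2}=1$. Each $\xi_{n}$ is a measurable section $s\mapsto \xi_{n}(s)\in H_{s}$ with $\|\xi_{n}\|^{2}=\int_{X}\|\xi_{n}(s)\|^{2}\,d\mu(s)$, so monotone convergence gives $\sum_{n}\|\xi_{n}(s)\|^{2}<\infty$ for $\mu$-a.e.\ $s$. On this conull set I would set
$$\rho_{s}(b):=\sum_{n=1}^{\infty}\langle b\,\xi_{n}(s),\xi_{n}(s)\rangle_{H_{s}},\qquad b\in \mathcal{M}_{s},$$
which is a positive normal linear functional on $\mathcal{M}_{s}$. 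For decomposable $a=\int^{\oplus}a(s)\,d\mu\in\mathcal{M}$, the identity $\langle a\xi_{n},\xi_{n}\rangle=\int_{X}\langle a(s)\xi_{n}(s),\xi_{n}(s)\rangle_{H_{s}}\,d\mu(s)$ combined with Tonelli on $|a|$ and then Fubini yields the desired formula $\rho(a)=\int_{X}\rho_{s}(a(s))\,d\mu(s)$. Measurability of $s\mapsto \rho_{s}(a(s))$ is inherited from the measurable structure on decomposable fields.

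For the moreover clause I would exploit that every measurable $A\subseteq X$ yields a central projection $\chi_{A}$ inside the diagonalizable algebra $\mathcal{C}\subseteq\mathcal{M}$. Fix a countable $\sigma$-weakly dense $*$-subalgebra $\{a_{n}\}$ of $\mathcal{M}$, available since $H$ is separable. In the tracial case, for each ordered pair $(m,n)$ the identity $\rho(\chi_{A}Ea_{m}Ea_{n}E)=\rho(\chi_{A}Ea_{n}Ea_{m}E)$ holds for every $A$ by centrality of $\chi_{A}$ and the trace property of $\rho$ on $E\mathcal{M}E$; since $A$ is arbitrary this forces $\rho_{s}(E(s)a_{m}(s)E(s)a_{n}(s)E(s))=\rho_{s}(E(s)a_{n}(s)E(s)a_{m}(s)E(s))$ $\mu$-a.e. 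Taking a single conull set off which this holds simultaneously for all countably many pairs, normality of each $\rho_{s}$ and density of the reductions of $\{a_{n}\}$ in the fibers upgrade this to traciality of $\rho_{s}$ on $E(s)\mathcal{M}_{s}E(s)$. The faithful case is parallel: if $N$ denotes the measurable set where some $\rho_{s}(E(s)a^{*}aE(s))$ vanishes while $E(s)a(s)^{*}a(s)E(s)\neq 0$ for some $a$ in the countable dense family, then applying $\chi_{N}\in\mathcal{C}$ and using $\chi_{N}Ea^{*}aE=E(\chi_{N}a)^{*}(\chi_{N}a)E$ together with faithfulness of $\rho$ on $E\mathcal{M}E$ forces $\mu(N)=0$.

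The main obstacle is the null-set bookkeeping. One must arrange that the measurability of $s\mapsto \rho_{s}(a(s))$, the pointwise traciality across all countably many pairs, and the pointwise faithfulness across the countable dense family all hold off a common $\mu$-null set; one must also verify that the fiber functionals are independent (a.e.) of the chosen vector representation of $\rho$. These are standard within direct-integral theory as developed in Chapter~14 of Kadison--Ringrose, but they require handling decomposable operators modulo null sets throughout and using the separability of $H$ to keep all exceptional sets countable in number.
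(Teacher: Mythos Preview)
The paper does not supply its own proof of this lemma: it is quoted directly from Kadison--Ringrose (Lemma~14.1.19) with no argument given. The only additional content the paper provides is Remark~\ref{2.3} immediately following, which records that in the Kadison--Ringrose proof one takes $\rho=\sum_{n}\omega_{y_n}$ and then chooses $\rho_s=\sum_n\omega_{y_n(s)}$.

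Your proposal is correct and reproduces exactly this standard construction: you write $\rho$ as $\sum_n\langle\,\cdot\,\xi_n,\xi_n\rangle$, decompose the $\xi_n$ fiberwise, and define $\rho_s$ by the same formula on each $H_s$, which is precisely the content of Remark~\ref{2.3}. Your handling of the ``moreover'' clause---testing traciality and faithfulness against a countable $\sigma$-weakly dense family and cutting by the diagonalizable projections $\chi_A\in\mathcal{C}$ to localize---is the standard route taken in Chapter~14 of Kadison--Ringrose. So there is nothing to compare: your argument \emph{is} the cited proof, and the paper offers no alternative.
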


\begin{remark} \label{2.3}
From the proof of Lemma 14.1.19 in \cite{KR1}, we obtain that if
$\rho=\sum\limits_{n=1}^{\infty}\omega_{y_{n}}$ on $\mathcal{M}$,
where $\{ y_{n} \}$ is a sequence of vectors in $H$ such that
$\sum\limits_{n=1}^{\infty}\Vert y_{n} \Vert ^{2}=1$ and
$\omega_{y}$ is defined on $\mathcal{M}$ such that
$\omega_{y}(a)=\langle ay, y \rangle$ for any $a \in \mathcal{M}, y
\in H$, then $\rho_{s}$ can be chosen to be
$\sum\limits_{n=1}^{\infty} \omega_{y_{n}(s)}$ for each $s \in X$.
\end{remark}

\begin{remark} \label{2.4}
Let $\mathcal{M}=\int_{X} \bigoplus M_{s} d \mu$ and $H= \int_{X}
\bigoplus H_{s} d \mu$ be the direct integral decompositions of
$(\mathcal{M},H)$ relative to the center $\mathcal{Z}$ of
$\mathcal{M}$. By the argument in section 14.1 in \cite{KR1}, we can
find a separable Hilbert space $K$ and a family of unitaries
$\{U_{s}:H_{s} \to K; s \in X \}$ such that $s \to U_{s}x(s)$ is
measurable (i.e. $s \to \langle U_{s}x(s), y \rangle$ is measurable
for any vector $y$ in $K$) for every $x\in H$ and $s \to
U_{s}a(s)U_{s}^{*}$ is measurable (i.e. $s \to \langle
U_{s}a(s)U_{s}^{* }y, z \rangle$ is measurable for any vectors
$y,z$ in $K$) for every decomposable operator $a \in B(H)$.
\end{remark}

\begin{proposition} \label{2.5}
 Let $\mathcal{M}$ be a type II$_{1}$ von Neumann algebra acting on a separable Hilbert space H. Let $\mathcal{M}=\int_{X} \bigoplus M_{s} d \mu$ and $H= \int_{X} \bigoplus H_{s} d \mu$ be the direct integral decompositions of $\mathcal{M}$ and $H$ relative to the center $\mathcal{Z}$ of $\mathcal{M}$. Suppose $K$ is a Hilbert space and $\{ U_{s}: H_{s} \to K \}$ is a family of unitaries as in Remark \ref{2.4}. Denote by $\mathcal{B}$ the unit ball of $B(K)$ equipped with the $*$-strong operator topology. Suppose $\rho$ is a faithful normal tracial state on $\mathcal{M}$. Then there is a family of positive, faithful, normal, tracial linear functionals $\rho_s$ on $\mathcal{M}_{s}$ (almost everywhere) such that
\begin{enumerate}
\item [(a)] $\rho (a)=\int_{X}\rho _{s}(a(s))d\mu$ for every $a \in \mathcal{M}$;

\item [(b)] for any $a_0\in\mathcal M$, there exists a Borel $\mu$-null subset N of X such that the maps
$$(s,b) \to \rho_{s}((a_{0}(s)U_{s}^{*}bU_{s}-U_{s}^{*}bU_{s}a_{0}(s))^{*}(a_{0}(s)U_{s}^{*}bU_{s}-U_{s}^{*}bU_{s}a_{0}(s)))$$
and
$$(s,b) \to \rho_{s}(U_{s}^{*}bU_{s})$$
are Borel measurable from $(X \setminus N) \times \mathcal{B}$ to $\mathbb{C}$.\end{enumerate}
\end{proposition}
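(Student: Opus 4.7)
The plan is to prove (a) by a direct application of Lemma \ref{2.2} with a concrete choice of fibre functionals, and to prove (b) by reducing the measurability claims to Carath\'eodory-type joint measurability statements for operator--vector evaluations, after pushing everything through the unitaries $U_s$ onto the fixed space $K$.

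For (a), since $\rho$ is a normal state on $\mathcal{M}$ acting on a separable Hilbert space, write $\rho = \sum_{n=1}^\infty \omega_{y_n}$ with $\sum_n \|y_n\|^2 = 1$. Lemma \ref{2.2} together with Remark \ref{2.3} then yields that $\rho_s := \sum_{n=1}^\infty \omega_{y_n(s)}$ are positive normal functionals on $\mathcal{M}_s$ satisfying $\rho(a) = \int_X \rho_s(a(s))\,d\mu$; since $\rho$ is faithful and tracial and $\mathcal{M}$ contains the diagonalizable operators, the faithful/tracial clause of Lemma \ref{2.2} (applied with $E=I$) gives that $\rho_s$ is faithful and tracial for $\mu$-almost every $s$.

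For (b), set $\xi_n(s) = U_s y_n(s) \in K$ and $\tilde{a}_0(s) = U_s a_0(s) U_s^* \in B(K)$. Direct computation yields
\[
\rho_s(U_s^* b U_s) = \sum_{n=1}^\infty \langle b\,\xi_n(s),\,\xi_n(s)\rangle
\]
and
\[
\rho_s\bigl((a_0(s)U_s^* bU_s - U_s^* bU_s a_0(s))^*(a_0(s)U_s^* bU_s - U_s^* bU_s a_0(s))\bigr) = \sum_{n=1}^\infty \bigl\|\tilde{a}_0(s) b \xi_n(s) - b \tilde{a}_0(s)\xi_n(s)\bigr\|^2.
\]
From Remark \ref{2.4}, each $s \mapsto \xi_n(s)$ is weakly measurable into $K$ and hence, by Pettis's theorem (using separability of $K$), is norm-measurable; similarly $s \mapsto \langle \tilde{a}_0(s) y, z\rangle$ is measurable for all $y,z \in K$, so $s \mapsto \tilde{a}_0(s) y$ is norm-measurable for each fixed $y \in K$.

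What remains is joint Borel measurability of the summands on $X \times \mathcal{B}$. Two facts suffice: on the unit ball $\mathcal{B}$ with the $*$-strong topology (a Polish space, since $K$ is separable), the action $(b,\xi) \mapsto b\xi$ from $\mathcal{B}\times K$ to $K$ is jointly continuous; and a function on $X \times \mathcal{B}$ which is Borel in $s$ for each $b$ and continuous in $b$ for each $s$ is automatically jointly Borel, by a Carath\'eodory-type argument valid since $\mathcal{B}$ is second countable. Applying these, the maps $(s,b) \mapsto b\xi_n(s)$ and $(s,b) \mapsto \tilde{a}_0(s)\xi_n(s)$ are jointly Borel into $K$; iterating the same reasoning gives joint Borel measurability of $(s,b) \mapsto \tilde{a}_0(s) b\xi_n(s)$ and $(s,b) \mapsto b\tilde{a}_0(s)\xi_n(s)$, and hence of the two displayed integrands. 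Excluding a single $\mu$-null set $N$ on which any of the underlying almost-everywhere identities fail, and summing over $n$, yields (b). The principal obstacle is promoting the weak-operator measurability of $s \mapsto \tilde{a}_0(s)$ supplied by Remark \ref{2.4} to something strong enough to be combined with the $*$-strong topology on $\mathcal{B}$; Pettis's theorem together with the joint continuity of multiplication on bounded sets are the tools that make this work.
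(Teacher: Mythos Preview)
Your argument follows the same route as the paper's: write $\rho=\sum_n\omega_{y_n}$, take $\rho_s=\sum_n\omega_{y_n(s)}$, push everything through $U_s$ to the fixed space $K$, and reduce joint measurability to the measurability of $s\mapsto U_sy_n(s)$ and $s\mapsto U_sa_0(s)U_s^*$ combined with continuity in $b$. Your expansion $\sum_n\|\tilde a_0(s)b\xi_n(s)-b\tilde a_0(s)\xi_n(s)\|^2$ is exactly equivalent to the paper's four-term inner-product expansion.

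There is one genuine gap. Your Carath\'eodory step asserts that a function which is \emph{Borel} in $s$ and continuous in $b$ is jointly Borel; but what you have established via Remark~\ref{2.4} and Pettis is only that the maps $s\mapsto\xi_n(s)$ and $s\mapsto\tilde a_0(s)y$ are $\mu$-measurable (i.e., measurable for the completion of the Borel $\sigma$-algebra), not Borel. With only $\mu$-measurability in $s$, the Carath\'eodory argument yields joint $(\mu\text{-completion})\times\text{Borel}$ measurability, which is weaker than the Borel measurability on $(X\setminus N)\times\mathcal{B}$ that the statement demands. The paper closes this gap by invoking Lemma~14.3.1 in \cite{KR1}, which says precisely that a $\mu$-measurable map from $X$ into a Polish space agrees with a Borel map off a Borel $\mu$-null set; after excising countably many such null sets (one for each $\xi_n$ and one for $\tilde a_0$) the basic maps become honestly Borel in $s$, and then your Carath\'eodory argument goes through. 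Your final sentence ``excluding a single $\mu$-null set $N$ on which any of the underlying almost-everywhere identities fail'' does not cover this, because the issue is not an a.e.\ identity but the distinction between $\mu$-measurable and Borel. Add that Lusin-type step explicitly and your proof is complete and essentially identical to the paper's.
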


\begin{proof}

 If $\rho$ is a faithful, normal, tracial state on $\mathcal{M}$, then there exist a sequence of vectors $\{y_{n}\}\subset H$ with $\sum\limits_{n=1}^{\infty} \Vert y_{n} \Vert^{2}=1$ such that $\rho =\sum\limits_{n=1}^{\infty} \omega _{y_{n}}$.  Take $\rho_{s}=\sum\limits_{n=1}^{\infty} \omega _{y_{n}(s)} $ for every $s \in X$. By Remark \ref{2.3}, we know,  for $s\in X$ almost everywhere,   $\rho_s$ is a positive, faithful, normal, tracial linear functional on $\mathcal M_s$ and
 \begin{equation}\rho (a)=\int_{X}\rho _{s}(a(s))d\mu\qquad \forall a \in \mathcal{M}.\label{equa 2.5.1}\end{equation}

For each vector $y_n$ in $ H$, we let $\omega_{{y_n}}$ be the vector state relative to ${y_n}$. Then $$\omega _{{y_n}}(a)=\int_{X}\omega _{{y_n}(s)}(a(s))d\mu \qquad \forall a\in \mathcal{M}.$$
Consider the maps $\phi_n, \psi_n: X \times \mathcal{B} \to \mathbb{C}$:
\begin{eqnarray*}
\phi_n (s,b) = \omega_{{y_n}(s)}((a_{0}(s)U_{s}^{*}bU_{s}-U_{s}^{*}bU_{s}a_{0}(s))^{*}(a_{0}(s)U_{s}^{*}bU_{s}-U_{s}^{*}bU_{s}a_{0}(s)))
\end{eqnarray*}
and
\begin{eqnarray*}
\psi_n (s,b) = \omega_{{y_n}(s)}(U_{s}^{*}bU_{s}).
\end{eqnarray*}
We have
$$\begin{aligned}
 \phi_n (s,b) &= \omega_{{y_n}(s)} ((a_{0}(s)U_{s}^{*}bU_{s}-U_{s}^{*}bU_{s}a_{0}(s))^{*}(a_{0}(s)U_{s}^{*}bU_{s}-U_{s}^{*}bU_{s}a_{0}(s))) \\
& =  \langle (a_{0}{s}U_{s}^{* }bU_{s}-U_{s}^{*}bU_{s}a_{0}(s))^{* }(a_{0}(s)U_{s}^{* }bU_{s}-U_{s}^{* }bU_{s}a_{0}(s)){y_n}(s),{y_n}(s) \rangle \\
& = \langle a_{0}(s)U_{s}^{* }bU_{s}{y_n}(s), a_{0}(s)U_{s}^{* }bU_{s}{y_n}(s)\rangle \\
&\  \quad\quad -  \langle a_{0}(s)U_{s}^{* }bU_{s}{y_n}(s),  U_{s}^{*}bU_{s}a_{0}(s){y_n}(s) \rangle \\
&\   \quad\quad -   \langle U_{s}^{* }bU_{s}a_{0}(s){y_n}(s),  a_{0}(s)U_{s}^{* }bU_{s}{y_n}(s) \rangle \\
&\   \quad\quad +  \langle U_{s}^{* }bU_{s}a_{0}(s){y_n}(s), U_{s}^{* }bU_{s}a_{0}(s){y_n}(s)\rangle \\
& =  \langle bU_{s}{y_n}(s),U_{s}a_0^{* }(s)U_{s}^{* }U_{s}a_{0}(s)U_{s}^{* }bU_{s}{y_n}(s) \rangle \\
& \ \quad\quad -  \langle U_{s}a_{0}(s)U_{s}^{*}bU_{s}{y_n}(s), bU_{s}a_{0}(s)U_{s}^{*}U_{s}{y_n}(s) \rangle \\
& \ \quad\quad  -  \langle bU_{s}a_{0}(s)U_{s}^{*}U_{s}{y_n}(s), U_{s}a_{0}(s)U_{s}^{*}bU_{s}{y_n}(s) \rangle \\
& \ \quad\quad +  \langle U_{s}a_{0}(s)U_{s}^{*}U_{s}{y_n}(s),b^{*}bU_{s}a_{0}(s)U_{s}^{*}U_{s}{y_n}(s) \rangle.
\end{aligned}$$
By the choice of the family $\{U_{s}: s \in X \}$ in Remark \ref{2.4}, the maps
\begin{align}
&s \to U_{s}a_{0}(s)U_{s}^{*} \label{1}
\\
&s \to U_{s}a_{0}^{*}(s)U_{s}^{*} \label{2}
\end{align}
from $X$ to $B(K)$ and $$s \to U_{s}{y_n}(s)$$ from $X$ to $K$ are
measurable. Therefore by Lemma 14.3.1 in \cite{KR1}, there is a
Borel $\mu$-null subset $N_{n,1}$ of X such that, restricted to $X
\setminus N_{n,1}$, the maps (\ref{1}) and (\ref{2}) are all Borel
maps. It follows that  the map $\phi_n$ is a Borel map from $(X\setminus
N_{n,1})  \times \mathcal{B}$.

Since $\omega_{{y_n}(s)}(U_{s}^{*}bU_{s})=\langle bU_{s}{y_n}(s),
U_{s}{y_n}(s) \rangle$, the map $(s, b) \to
\omega_{{y_n}(s)}(U_{s}^{*}bU_{s})$ from $X \times \mathcal{B}$ to
$\mathbb{C}$ is measurable by the choice of $\{ U_{s}: s \in X \}$
in Remark \ref{2.4}. By Lemma 14.3.1 in \cite{KR1}, there exists a
Borel $\mu$-null subset $N_{n,2}$ of $X$ such that the map
\begin{eqnarray*}
\psi_n: (s, b) \to \omega_{{y_n}(s)}(U_{s}^{*}bU_{s})
\end{eqnarray*}
is Borel measurable from $(X \setminus N_{n,2}) \times \mathcal{B}$ to $\mathbb{C}$.

By the  discussions in the preceding paragraphs, the maps
$$\phi_n: (s,b)\rightarrow \omega_{y_{n}(s)}((a_{0}(s)U_{s}^{*}bU_{s}-U_{s}^{*}bU_{s}a_{0}(s))^{*}(a_{0}(s)U_{s}^{*}bU_{s}-U_{s}^{*}bU_{s}a_{0}(s)))$$
and
$$\psi_n:  (s,b) \to \omega_{y_n(s)}(U_{s}^{*}bU_{s})$$
are Borel measurable from $(X\setminus N_n) \times \mathcal{B}$ to $\mathbb{C}$, where $ N_n=N_{n,1}\cup N_{n,2}$ is a $\mu$-null subset of $X$.

Since $\rho_{s}=\sum\limits_{n=1}^{\infty} \omega _{y_{n}(s)} $, we obtain that
\begin{equation}(s,b)\rightarrow \rho_{s}((a_{0}(s)U_{s}^{*}bU_{s}-U_{s}^{*}bU_{s}a_{0}(s))^{*}(a_{0}(s)U_{s}^{*}bU_{s}-U_{s}^{*}bU_{s}a_{0}(s)))\label{equa 2.5.2}\end{equation}
and
\begin{equation}(s,b) \to \rho_{s}(U_{s}^{*}bU_{s})\label{equa 2.5.3}\end{equation}
are Borel measurable from $(X\setminus N) \times \mathcal{B}$ to $\mathbb{C}$, where $N=\cup_{n\in \Bbb N}N_n$ is a Borel $\mu$-null subset of $X$. By (\ref{equa 2.5.1}),
(\ref{equa 2.5.2}), and (\ref{equa 2.5.3}), we  complete the proof of the proposition.
\end{proof}

\section{Property $\Gamma$ for type II$_{1}$ von Neumann algebras}

In this section, we will introduce Property $\Gamma$ for general von
Neumann algebras and discuss some of its properties.

 Murray and von Neumann's Property $\Gamma$ for a type II$_1$ factor
is defined as follows. {\em Suppose $\mathcal{A}$ is a type II$_{1}$
factor with a trace $\tau$. Let $\Vert \cdot \Vert_{2}$ be the
$2$-norm on $\mathcal{A}$ given by $\Vert a
\Vert_{2}=\sqrt{\tau(a^{*}a)}$ for any $a \in \mathcal{A}$. Then
$\mathcal{A}$ has Property $\Gamma$ if, given $\epsilon >0$ and
$a_{1}, a_{2}, \dots, a_{k} \in \mathcal{A}$, there exists a unitary
$u \in \mathcal{A}$ such that

\begin{enumerate}
\item [(a)] $\tau(u)=0$;

\item [(b)] $\Vert ua_{j}-a_{j}u \Vert_{2} < \epsilon, \ \  \forall \  1 \leq j
\leq k$.
\end{enumerate}}
An equivalent definition of Property $\Gamma$ for a type II$_{1}$
factor $\mathcal{A}$ was given by Dixmier in
\cite{Di1}. {\em Suppose $\mathcal{A}$ is a type II$_{1}$ factor with a trace $\tau$. Let
$\Vert \cdot \Vert_{2}$ be the $2$-norm on $\mathcal{A}$ given by
$\Vert a \Vert_{2}=\sqrt{\tau(a^{*}a)}$ for any $a \in
\mathcal{A}$. Then $\mathcal{A}$ has Property $\Gamma$ if, given $n
\in \mathbb{N}$, $\epsilon >0$ and $a_{1}, a_{2}, \dots, a_{k} \in
\mathcal{A}$, there exists $n$ orthogonal equivalent projections $\{
p_{1}, p_{2}, \dots, p_{n} \}$ in $\mathcal{A}$ with sum $I$ such
that
$$\Vert p_{i}a_{j}-a_{j}p_{i} \Vert_{2} < \epsilon, \qquad  \forall \ 1
\leq i \leq n, 1 \leq j \leq k.$$}

 We introduce a  definition of Property $\hat\Gamma$ for a type
II$_{1}$ von Neumann algebra as follows.

\begin{definition} \label{3.1}
Suppose $\mathcal{M}$ is a type II$_{1}$ von Neumann algebra with a
predual $\mathcal M_{\sharp}$. Suppose that $\sigma (\mathcal M,
\mathcal M_\sharp)$ is the weak-$*$ topology on $\mathcal M$ induced
from $\mathcal M_\sharp$. We say that $\mathcal{M}$ has Property
$\hat{\Gamma}$ if and only if $ \forall \ a_{1}, a_{2}, \dots, a_{k} \in
\mathcal{M}$ and $\forall \ n\in \Bbb N$, there exist a partially ordered set $\Lambda$ and a family of
projections $$\{ p_{i \lambda}: 1\le i\le n; \lambda \in \Lambda \}\subseteq
\mathcal{M}$$ satisfying
\begin{enumerate}
\item [(i)] For each $\lambda \in \Lambda$,   $\{ p_{1 \lambda}, p_{2 \lambda}, \ldots,
p_{n \lambda} \}$ is a family of orthogonal equivalent projections in
$\mathcal M$ with sum $I$.
\item [(ii)] For each $1\le i\le n$ and $1\le j\le k$,
$$
\lim_{\lambda} (p_{i \lambda}a_{j}-a_{j}p_{i
\lambda})^*(p_{i \lambda}a_{j}-a_{j}p_{i \lambda}) =0\qquad \text {in $\sigma(\mathcal
M, \mathcal M_\sharp)$ topology.}$$
\end{enumerate}
\end{definition}

\begin{remark} \label{rem 3.1} Suppose that $\mathcal M$ acts on a
Hilbert space $ H$. It is well-known that $\sigma(\mathcal M, \mathcal M_\sharp)$, the weak-$*$ topology,
  on   the unit ball of
$\mathcal M$  coincides with the weak operator topology on
 the unit ball of
$\mathcal M$. (See Theorem 7.4.2 in \cite{KR1})

\end{remark}

 Let $\mathcal{M}$ be a countably decomposable  type II$_{1}$ von Neumann algebra with    a faithful normal tracial state $\rho$.
Let $\Vert \cdot \Vert_{2}$ be the $2$-norm on $\mathcal{M}$ given
by $\Vert a \Vert_{2}=\sqrt{\rho(a^{*}a)}, \forall a \in
\mathcal{M}$ . Let $H_{\rho}=L^{2}(\mathcal{M}, \rho)$. For each
element $a \in \mathcal{M}$, we denote by $\bar{a}$ the
corresponding vector in $H_{\rho}$. Let $\pi_{\rho}$ be the left
regular representation of $\mathcal{M}$ on $H_{\rho}$ induced by
$\pi_{\rho}(a)(\bar{b})=\bar{ab}, \forall a, b \in \mathcal{M}$. The
vector $\bar{I}$ is cyclic for $\pi_{\rho}$, where $I$ is the unit
of $\mathcal{M}$. Since that $\rho$ is faithful, we obtain that
$\pi_{\rho}$ is faithful.

The following result is well-known. For the purpose of completeness, we include a proof here.
\begin{lemma} \label{3.2}
Let $\mathcal{M}$ be a countably decomposable  type II$_{1}$ von
Neumann algebra acting on a Hilbert space $H$ and   $\rho$ be
a faithful normal tracial state on $\mathcal{M}$. Let $\Vert \cdot
\Vert_{2}$ be the $2$-norm on $\mathcal{M}$ given by $\Vert a
\Vert_{2}=\sqrt{\rho(a^{*}a)}, \forall a \in \mathcal{M}$. Then
the topology induced by $\Vert \cdot \Vert_{2}$ coincides with the
strong operator topology on bounded subsets of $\mathcal{M}$.
\end{lemma}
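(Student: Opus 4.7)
The proof plan is a direct two-sided estimate using the representation of $\rho$ as a sum of vector states. Since $\mathcal{M}$ is countably decomposable and $\rho$ is normal, there exist vectors $\{y_{n}\}\subseteq H$ with $\sum_{n=1}^{\infty}\Vert y_{n}\Vert^{2}<\infty$ and $\rho=\sum_{n=1}^{\infty}\omega_{y_{n}}$, so that
\[
\Vert a\Vert_{2}^{2}=\sum_{n=1}^{\infty}\Vert ay_{n}\Vert^{2}\qquad\forall\, a\in\mathcal{M}.
\]

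For the direction \emph{strong operator topology implies} $\Vert\cdot\Vert_{2}$, I would take a uniformly bounded net $\{a_{\lambda}\}\subseteq\mathcal{M}$ with $\sup_{\lambda}\Vert a_{\lambda}\Vert\le C$ converging to $0$ strongly. Then each summand $\Vert a_{\lambda}y_{n}\Vert^{2}\to 0$ and is dominated by the summable majorant $C^{2}\Vert y_{n}\Vert^{2}$, so dominated convergence on the counting measure yields $\Vert a_{\lambda}\Vert_{2}^{2}\to 0$.

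For the converse I would exploit faithfulness of $\rho$ to show that the closed subspace $K:=\overline{\mathrm{span}}\{by_{n}:b\in\mathcal{M}',\,n\ge 1\}$ equals $H$. Indeed, $K$ is $\mathcal{M}'$-invariant, so the projection $q$ onto $K^{\perp}$ lies in $\mathcal{M}$; since $qy_{n}=0$ for every $n$, one has $\rho(q)=\sum_{n}\Vert qy_{n}\Vert^{2}=0$, forcing $q=0$. Now given a uniformly bounded net $\{a_{\lambda}\}$ with $\Vert a_{\lambda}\Vert_{2}\to 0$, for every $n$ and every $b\in\mathcal{M}'$ one gets
\[
\Vert a_{\lambda}by_{n}\Vert=\Vert ba_{\lambda}y_{n}\Vert\le\Vert b\Vert\,\Vert a_{\lambda}y_{n}\Vert\le\Vert b\Vert\,\Vert a_{\lambda}\Vert_{2}\longrightarrow 0;
\]
combined with the uniform bound $\sup_{\lambda}\Vert a_{\lambda}\Vert<\infty$ and the density $K=H$, a standard $\varepsilon/2$ approximation upgrades this to $a_{\lambda}y\to 0$ for every $y\in H$, which is strong operator convergence.

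The only step that really uses the hypotheses beyond routine Hilbert-space manipulations is the density assertion $K=H$, and this is a one-line consequence of the faithfulness of $\rho$ once one notices that the projection onto $K^{\perp}$ lies in $\mathcal{M}$. I therefore do not expect any serious obstacle: everything else reduces to dominated convergence and a norm-bounded approximation. Note in particular that the trace property of $\rho$ is not used, consistent with the classical fact that the lemma holds for an arbitrary faithful normal state on a countably decomposable von Neumann algebra.
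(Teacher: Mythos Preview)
Your argument is correct, but it differs from the paper's. The paper passes to the GNS space $H_{\rho}=L^{2}(\mathcal{M},\rho)$: it shows that $\pi_{\rho}$ is WOT--WOT continuous on bounded sets, uses Kaplansky's density theorem to conclude $\pi_{\rho}(\mathcal{M})$ is a von Neumann algebra, and then invokes the general fact (Theorem 7.1.16 in \cite{KR1}) that a $*$-isomorphism of von Neumann algebras is an SOT-homeomorphism on bounded balls. The converse direction is then read off in $H_{\rho}$, where $\Vert\cdot\Vert_{2}$ is visibly the SOT seminorm at the cyclic vector $\bar{I}$. By contrast, you stay in the original Hilbert space $H$, decompose $\rho=\sum_{n}\omega_{y_{n}}$, and use the commutant $\mathcal{M}'$ together with faithfulness of $\rho$ to get the cyclicity statement $\overline{\mathrm{span}}\{\mathcal{M}'y_{n}\}=H$. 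Your route is more elementary and self-contained (no GNS, no Kaplansky, no abstract isomorphism theorem), and as you observe it makes transparent that traciality plays no role; the paper's argument, as written, implicitly uses the trace when bounding $\Vert c_{\lambda}a\Vert_{2}$ by $\Vert a\Vert\,\Vert c_{\lambda}\Vert_{2}$. One small remark: your ``dominated convergence on the counting measure'' is being applied to a net rather than a sequence, so strictly speaking you should phrase it as a tail estimate (choose $N$ with $\sum_{n>N}C^{2}\Vert y_{n}\Vert^{2}<\varepsilon/2$, then handle the finitely many remaining terms), but this is routine.
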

\begin{proof}
We claim that $\pi_{\rho}$ is $WOT-WOT$ continuous on bounded subsets of $\mathcal{M}$. To show this, we first suppose $\{ a_{\lambda} \}$ is a net in the unit ball $(\mathcal{M})_{1}$ of $\mathcal{M}$ such that
$$WOT-\lim\limits_{\lambda} a_{\lambda} =a \in (\mathcal{M})_{1}.$$
Then for any $b, c \in \mathcal{M}$,
\begin{eqnarray}
\lim\limits_{\lambda} \langle \pi_{\rho}(a_{\lambda}) \pi_{\rho}(b) \bar{I}, \pi_{\rho}(c) \bar{I} \rangle =\lim\limits_{\lambda} \rho(c^{*}a_{\lambda}b) = \rho(c^{*}ab)=\langle \pi_{\rho}(a) \pi_{\rho}(b) \bar{I}, \pi_{\rho}(c) \bar{I} \rangle. \label{3}
\end{eqnarray}
Since the vector $\bar{I}$ is cyclic for $\pi_{\rho}$, we obtain from (\ref{3}) that
$$\lim\limits_{\lambda} \langle \pi_{\rho}(a_{\lambda})x, y \rangle=\langle \pi_{\rho}(a)x, y \rangle,  \qquad \forall x, y \in H_{\rho}. $$
Therefore $WOT-\lim\limits_{\lambda} \pi_{\rho}(a_{\lambda}) = \pi_{\rho}(a)$ and $\pi_{\rho}$ is $WOT-WOT$ continuous on bounded subsets of $\mathcal{M}$.

Since  $(\mathcal{M})_{1}$ is $WOT$ compact, the unit ball
$(\pi_{\rho}(\mathcal{M}))_{1}=\pi_{\rho}((\mathcal{M})_{1})$ is $WOT$
closed. By Kaplansky's Density Theorem, $\pi_{\rho}(\mathcal{M})$ is
a von Neumann algebra. Hence $\pi_{\rho}$ from $\mathcal{M}$ onto $\pi_{\rho}(\mathcal{M})$ is a $*$-isomorphism between von Neumann algebras. By Theorem 7.1.16
in \cite{KR1}, $\pi_{\rho}$ is a $*$-homeomorphism from
$(\mathcal{M})_{1}$ onto $(\pi_{\rho}(\mathcal{M}))_{1}$ when both
are endowed with the strong operator topology.

Now we can prove the result. First suppose $\{ b_{\lambda} \}$ is a net in $(\mathcal{M})_{1}$ such that $SOT-\lim\limits_{\lambda} b_{\lambda}=0$. Then $SOT-\lim\limits_{\lambda} b_{\lambda}^{*}b_{\lambda}=0$. Since $\rho$ is $SOT$-continuous on $(\mathcal{M})_{1}$, $\lim\limits_{\lambda} \rho(b_{\lambda}^{*}b_{\lambda})=0$, which implies that $\lim\limits_{\lambda} \Vert b_{\lambda} \Vert_{2}=0$. On the other hand, suppose $\{ c_{\lambda} \}$ is a net in $(\mathcal{M})_{1}$ such that $\lim\limits_{\lambda} \Vert c_{\lambda} \Vert_{2}=0$. Then for any $a \in \mathcal{M}$, $\lim\limits_{\lambda} \Vert c_{\lambda}a \Vert_{2}=0$, and hence
\begin{eqnarray*}
\lim\limits_{\lambda} \langle \pi_{\rho} (c_{\lambda}) \bar{a}, \pi_{\rho} (c_{\lambda}) \bar{a} \rangle&=&\lim\limits_{i} \langle \pi_{\rho} (a^{*}c_{\lambda}^{*}c_{\lambda}a) \bar{I}, \bar{I} \rangle \\
&=&\lim\limits_{\lambda}\rho(a^{*}c_{\lambda}^{*}c_{\lambda}a)\\
&=&\lim\limits_{\lambda} \Vert c_{\lambda}a \Vert_{2}^{2}\\
&=&0.
\end{eqnarray*}
Because $\{ \bar{a}: a \in \mathcal{M} \}$ is norm dense in $H_{\rho}$, it follows that $SOT-\lim\limits_{i} \pi_{\rho} (c_{\lambda}) = 0 $ in $B(H_{\rho})$. Since $\pi_{\rho}$ is a homeomorphism from $(\mathcal{M})_{1}$ onto $(\pi_{\rho}(\mathcal{M}))_{1}$ when both are endowed with the strong operator topology, $SOT-\lim\limits_{i} c_{\lambda} = 0$ in $B(H)$.
\end{proof}

\begin{corollary} \label{3.3}
Let $\mathcal{M}$ be a countably decomposable  type II$_{1}$ von
Neumann algebra with a faithful normal tracial state $\rho$.
Then the following are equivalent:
\begin{enumerate}
\item [(a)] $\mathcal{M}$ has Property $\hat\Gamma$ (in the sense of Definition \ref{3.1});
\item [(b)] Given any $\epsilon >0$, any positive integer $n$ and elements $a_{1},
a_{2}, \dots, a_{k} \in \mathcal{M}$, there exist orthogonal
equivalent projections $ p_{1}, p_{2}, \dots, p_{n}$ in $\mathcal M$ summing to
$I$ satisfying
$$\Vert p_{i}a_{j}-a_{j}p_{i} \Vert_{2, \rho} < \epsilon, \qquad 1 \leq i \leq n, 1 \leq j \leq k,$$
where the $2$-norm $\Vert \cdot \Vert_{2, \rho}$ on $\mathcal{M}$ is given
by $\Vert a \Vert_{2, \rho}=\sqrt{\rho(a^{*}a)}$ for any $a \in
\mathcal{M}$.
\item [(c)] For any faithful normal tracial state $\tilde\rho$ on $\mathcal{M}$,
any $\epsilon >0$, any positive integer $n$ and elements $a_{1},
a_{2}, \dots, a_{k} \in \mathcal{M}$, there exist orthogonal
equivalent projections $ p_{1}, p_{2}, \dots, p_{n} $  in $\mathcal M$  summing to
$I$ satisfying
$$\Vert p_{i}a_{j}-a_{j}p_{i} \Vert_{2, \tilde{\rho}} < \epsilon, \qquad 1 \leq i \leq n, 1 \leq j \leq k,$$
where the $2$-norm $\Vert \cdot \Vert_{2, \tilde{\rho}}$ on $\mathcal{M}$ is given
by $\Vert a \Vert_{2, \tilde{\rho}}=\sqrt{\tilde\rho(a^{*}a)}$ for any $a \in
\mathcal{M}$.\end{enumerate}
\end{corollary}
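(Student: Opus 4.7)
The plan is to prove the cyclic chain (c) $\Rightarrow$ (b) $\Rightarrow$ (a) $\Rightarrow$ (c), exploiting Lemma \ref{3.2} and the observation from Remark \ref{rem 3.1} that weak-$*$ convergence of a bounded net in $\mathcal{M}$ agrees with WOT convergence. The implication (c) $\Rightarrow$ (b) is immediate by specializing $\tilde\rho = \rho$.

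For (b) $\Rightarrow$ (a), I would fix $a_1,\dots,a_k\in \mathcal{M}$ and $n\in\mathbb{N}$, take the directed set $\Lambda=(0,\infty)$ ordered by reverse inequality, and for each $\lambda=\epsilon>0$ invoke (b) to produce orthogonal equivalent projections $p_{1\lambda},\dots,p_{n\lambda}$ summing to $I$ with $\|p_{i\lambda}a_j-a_j p_{i\lambda}\|_{2,\rho}<\epsilon$. The commutators $c_{ij\lambda}:=p_{i\lambda}a_j-a_jp_{i\lambda}$ are uniformly norm-bounded by $2\|a_j\|$, so by Lemma \ref{3.2} the condition $\|c_{ij\lambda}\|_{2,\rho}\to 0$ is equivalent to $c_{ij\lambda}\to 0$ in SOT. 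Hence $c_{ij\lambda}^*c_{ij\lambda}\to 0$ in SOT (and thus in WOT), and since the sequence is bounded, Remark \ref{rem 3.1} turns WOT convergence into convergence in the weak-$*$ topology $\sigma(\mathcal{M},\mathcal{M}_\sharp)$, which is exactly condition (ii) of Definition \ref{3.1}.

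For (a) $\Rightarrow$ (c), I would let $\tilde\rho$ be any faithful normal tracial state, fix $\epsilon>0$, $n\in\mathbb{N}$, and $a_1,\dots,a_k\in\mathcal{M}$, and use Property $\hat\Gamma$ to produce the net $\{p_{i\lambda}\}$ from Definition \ref{3.1}. Since $\tilde\rho$ is normal it belongs to $\mathcal{M}_\sharp$ and is therefore continuous for $\sigma(\mathcal{M},\mathcal{M}_\sharp)$. Applying $\tilde\rho$ to $(p_{i\lambda}a_j-a_j p_{i\lambda})^*(p_{i\lambda}a_j-a_j p_{i\lambda})$ yields
\[
\lim_\lambda \|p_{i\lambda}a_j-a_j p_{i\lambda}\|_{2,\tilde\rho}^{\,2}=\lim_\lambda \tilde\rho\bigl((p_{i\lambda}a_j-a_j p_{i\lambda})^*(p_{i\lambda}a_j-a_j p_{i\lambda})\bigr)=0
\]
for each $1\le i\le n$, $1\le j\le k$. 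Choosing $\lambda$ sufficiently far out in $\Lambda$, the projections $p_{1\lambda},\dots,p_{n\lambda}$ satisfy the required inequality.

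No step presents a serious obstacle; the only point needing care is the passage between the $2$-norm topology and the weak operator topology, and this is exactly what Lemma \ref{3.2} and Remark \ref{rem 3.1} provide. The fact that every faithful normal tracial state lies in the predual is what makes (a) $\Rightarrow$ (c) independent of the choice of $\tilde\rho$, and is the conceptual reason the three conditions collapse into one.
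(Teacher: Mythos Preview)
Your proposal is correct and follows essentially the same approach as the paper: the paper proves (a)$\Leftrightarrow$(b) and (b)$\Leftrightarrow$(c), while you prove the cycle (c)$\Rightarrow$(b)$\Rightarrow$(a)$\Rightarrow$(c), but the substantive steps---using Lemma~\ref{3.2} and Remark~\ref{rem 3.1} to pass between $\|\cdot\|_2$-convergence and $\sigma(\mathcal{M},\mathcal{M}_\sharp)$-convergence on bounded sets---are the same. Your (a)$\Rightarrow$(c) is marginally cleaner in that you apply $\tilde\rho\in\mathcal{M}_\sharp$ directly to the weak-$*$ convergent net, whereas the paper routes (b)$\Leftrightarrow$(c) through Lemma~\ref{3.2} to identify all $2$-norm topologies on bounded sets; both arguments amount to the same observation that normal states are weak-$*$ continuous.
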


\begin{proof}
We might assume that $\mathcal M$ acts on a Hilbert space $ H$.

(a)$\Rightarrow$(b) It follows directly from Definition \ref{3.1}, Remark \ref{rem 3.1} and Lemma \ref{3.2}.

(b)$\Rightarrow$(a) Assume that (b) holds. Let $n \in \Bbb N$ and
$a_{1}, a_{2}, \dots, a_{k} \in \mathcal{M}$. By (b), there exists a
family of projections
$$\{ p_{i r}: 1\le i\le n; r\ge 1 \}\subseteq \mathcal{M}$$
satisfying
\begin{enumerate}
\item [(1)] for each $r\ge 1$,   $p_{1r}, p_{2r}, \ldots,
p_{nr}$ is a family of orthogonal equivalent projections in
$\mathcal M$ with sum $I$.
\item [(2)] for each $1\le i\le n$ and $1\le j\le k$,
\begin{equation} \lim_{r\rightarrow \infty} \rho((p_{ir}a_{j}-a_{j}p_{i
r})^*(p_{ir}a_{j}-a_{j}p_{i r})) =0.\label {eq1}\end{equation}
\end{enumerate}
In order to show that $\mathcal M$ has Property $\hat{\Gamma}$, we need
only to verify that  the family of projections $\{ p_{i r}: 1\le
i\le n; r\ge 1 \}$ satisfies condition (ii) in Definition \ref{3.1}.
Actually, combining equation (\ref{eq1})  and Lemma \ref{3.2}, we
know that, for each $1 \le i \le n$ and $1 \le j \le k$, as $r\rightarrow \infty$, $ p_{ir}a_{j}-a_{j}p_{i r}$
converges to $0$ in strong operator topology. Therefore, for each $1 \le i \le n$ and $1 \le j \le k$, as
$r\rightarrow \infty$, $(p_{ir}a_{j}-a_{j}p_{i
r})^*(p_{ir}a_{j}-a_{j}p_{i r})$ converges to $0$ in weak operator
topology and, whence in $\sigma(\mathcal M, \mathcal M_\sharp)$ by
Remark \ref{rem 3.1}. Therefore, $\mathcal M$ has Property $\hat{\Gamma}$.

 (b)$\Leftrightarrow$(c) Suppose $\rho_{1}$ and $\rho_{2}$ are
two faithful normal tracial states on $\mathcal{M}$. By Lemma
\ref{3.2}, the $2$-norms induced by $\rho_{1}$ and $\rho_{2}$ will
give the same topology on bounded subsets of $\mathcal{M}$ (since
they both coincide with the strong operator topology on bounded
subsets of $\mathcal{M}$). Therefore (b) and (c) are equivalent.
\end{proof}

\begin{corollary}\label{3.3.5}
Suppose that $\mathcal M$ is a factor of type II$_1$ with a tracial state $\tau$. The following are equivalent:
\begin{enumerate}
 \item [(i)] $\mathcal M$ has Property $\hat\Gamma$ in the sense of Definition \ref{3.1}.
 \item [(ii)] $\mathcal M$ has Property $\Gamma$ in the sense of Dixmier (equivalently,  of Murray and von Neumann).
\end{enumerate}
\end{corollary}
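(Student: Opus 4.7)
The plan is to deduce Corollary \ref{3.3.5} as an immediate consequence of Corollary \ref{3.3}. The first step is simply to observe that every type II$_1$ factor $\mathcal{M}$ carries a unique tracial state $\tau$, which is automatically faithful and normal, and that $\mathcal{M}$ is countably decomposable (since it admits a faithful normal state). Consequently the standing hypotheses of Corollary \ref{3.3} are satisfied with $\rho = \tau$.

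With that in hand, I would unwind condition (b) of Corollary \ref{3.3} in this setting: it asserts that for every $\epsilon > 0$, every $n \in \mathbb{N}$, and every $a_1, a_2, \ldots, a_k \in \mathcal{M}$, there exist $n$ orthogonal equivalent projections $p_1, p_2, \ldots, p_n \in \mathcal{M}$ with $p_1 + \cdots + p_n = I$ satisfying
$$\Vert p_i a_j - a_j p_i \Vert_{2,\tau} < \epsilon, \qquad 1 \le i \le n,\ 1 \le j \le k.$$
This is verbatim Dixmier's formulation of Property $\Gamma$ for the factor $\mathcal{M}$. Therefore the equivalence (i) $\Leftrightarrow$ (ii) is nothing more than the equivalence (a) $\Leftrightarrow$ (b) in Corollary \ref{3.3}, applied to the trace $\tau$.

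I do not anticipate any obstacle, since the substance of the argument has already been absorbed into the earlier material, notably Lemma \ref{3.2}, which identifies the $2$-norm topology with the strong operator topology on bounded subsets of $\mathcal{M}$, and hence allows the weak-$*$ limit in Definition \ref{3.1} to be recast as a $\Vert \cdot \Vert_{2,\tau}$-approximation and vice versa. The only point worth flagging in the write-up is the uniqueness and faithful normality of the tracial state on a type II$_1$ factor, which places us in the setting of Corollary \ref{3.3} without any additional hypothesis and also guarantees that condition (c) of that corollary coincides with condition (b) here (there is only one trace to check).
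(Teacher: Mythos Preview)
Your proposal is correct and follows essentially the same route as the paper: both observe that a type II$_1$ factor is countably decomposable with a unique faithful normal tracial state $\tau$, and then invoke the equivalence (a)\,$\Leftrightarrow$\,(b) of Corollary~\ref{3.3} with $\rho=\tau$, which is exactly Dixmier's formulation of Property~$\Gamma$. The paper's proof is in fact even terser than yours, but the content is identical.
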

\begin{proof}
A type II$_1$ factor  is countably decomposable and $\tau$ is the unique faithful normal tracial state of $\mathcal M$.
From Dixmier's Definition of Property $\Gamma$ for type II$_1$ factors and Corollary \ref{3.3}, we know that $(i) \Leftrightarrow (ii)$.
\end{proof}
\begin{remark}\label{3.3.6}
Because of Corollary \ref{3.3.5}, from now on  we will use Definition \ref{3.1} as a definition of Property $\Gamma$ for type II$_1$ von Neumann algebras.
\end{remark}

In the rest of the paper, we will only consider von Neumann algebras
with separable predual because direct integral theory is only
applied to von Neumann algebras with separable predual. Next
proposition follows directly from Definition \ref{3.1}, Corollary
\ref{3.3} and the assumption that $\mathcal{M}$ is a type II$_{1}$
von Neumann algebra with separable predual.

\begin{proposition} \label{3.4}
Let $\mathcal{M}$ be a type II$_{1}$ von Neumann algebra with
separable predual and  $\rho$ be   a faithful normal tracial state on
$\mathcal{M}$. Then $\mathcal{M}$ has Property $\Gamma$ if and only
if for any $n \in \mathbb{N}$, there exists a family of projections
$\{ p_{ir}: 1 \leq i \leq n, r \in \mathbb{N} \}$ such that
\begin{enumerate}
\item [(i)] for each $r \in \mathbb{N}$, $\{ p_{ir}: 1\leq i \leq n \}$ is a set of $n$ equivalent orthogonal projections in $\mathcal{M}$ with sum $I$;

\item [(ii)] for each $1 \leq i \leq n$, $\lim\limits_{r \to \infty} \Vert p_{ir}a-ap_{ir} \Vert_{2} = 0$  for any $a \in \mathcal{M}$, where $\Vert \cdot \Vert_{2}$ is the $2$-norm induced by $\rho$ on $\mathcal{M}$.\end{enumerate}
\end{proposition}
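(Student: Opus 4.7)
The plan is to prove the two directions separately. The ($\Leftarrow$) direction is essentially automatic: given any finite collection $a_1,\ldots,a_k\in\mathcal M$, any $n\in\Bbb N$ and any $\epsilon>0$, one simply chooses $r$ large enough that $\Vert p_{ir}a_j-a_jp_{ir}\Vert_2<\epsilon$ for every $1\le i\le n$ and $1\le j\le k$; this produces exactly the projections required by Corollary \ref{3.3}(b), so $\mathcal M$ has Property $\Gamma$.

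The ($\Rightarrow$) direction is where separability enters. Since $\mathcal M$ has separable predual, $\mathcal M$ acts faithfully on a separable Hilbert space, so the unit ball $(\mathcal M)_1$ is separable in the strong operator topology, and by Lemma \ref{3.2} also in the $2$-norm $\Vert\cdot\Vert_{2}$ induced by $\rho$. Fix $n\in\Bbb N$ and choose a sequence $\{b_m\}_{m\ge 1}\subseteq(\mathcal M)_1$ that is $\Vert\cdot\Vert_2$-dense in $(\mathcal M)_1$. Applying Corollary \ref{3.3}(b) with the finite set $\{b_1,\ldots,b_r\}$ and the tolerance $1/r$, I obtain, for each $r\in\Bbb N$, $n$ orthogonal equivalent projections $p_{1r},\ldots,p_{nr}\in\mathcal M$ summing to $I$ with
$$\Vert p_{ir}b_m-b_mp_{ir}\Vert_2<\frac{1}{r},\qquad 1\le i\le n,\ 1\le m\le r.$$

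The final step is the standard approximation: for any $a\in\mathcal M$, write $a=\Vert a\Vert\cdot(a/\Vert a\Vert)$ (the case $a=0$ being trivial) and, for $\epsilon>0$, pick $b_m$ with $\Vert a/\Vert a\Vert-b_m\Vert_2<\epsilon$. Using the tracial inequality $\Vert pc\Vert_2\le\Vert c\Vert_2$ and $\Vert cp\Vert_2\le\Vert c\Vert_2$ for any projection $p$, which follows from $\rho(c^*pc)\le\rho(c^*c)$ and the trace property, one gets
$$\Vert p_{ir}a-ap_{ir}\Vert_2\le 2\Vert a\Vert\,\Vert a/\Vert a\Vert-b_m\Vert_2+\Vert a\Vert\,\Vert p_{ir}b_m-b_mp_{ir}\Vert_2.$$
Taking $r\ge m$ so that the second term is at most $\Vert a\Vert/r$, and then letting $\epsilon\to 0$ (choosing $m$ first, then $r$ large), this shows $\lim_{r\to\infty}\Vert p_{ir}a-ap_{ir}\Vert_2=0$ for every $a\in\mathcal M$.

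I do not anticipate a serious obstacle here; the proof is essentially a diagonalization argument made possible by the hypothesis of separable predual (which is why Lemma \ref{3.2} gives a separable, metrizable $2$-norm topology on bounded sets). The only small care needed is to verify the trace inequalities $\Vert pc\Vert_2,\Vert cp\Vert_2\le\Vert c\Vert_2$ used in the error splitting, and to ensure the density argument is carried out on the unit ball so that one can pass from $b_m\in(\mathcal M)_1$ to general $a\in\mathcal M$ by scaling.
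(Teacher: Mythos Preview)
Your proof is correct and follows exactly the approach the paper intends: the paper states only that the proposition ``follows directly from Definition \ref{3.1}, Corollary \ref{3.3} and the assumption that $\mathcal{M}$ is a type II$_{1}$ von Neumann algebra with separable predual,'' and your argument is precisely the routine diagonalization that fills in these details. Your handling of both directions via Corollary \ref{3.3}(b), the use of separability (through Lemma \ref{3.2}) to obtain a $\Vert\cdot\Vert_2$-dense sequence in the unit ball, and the $\Vert pc\Vert_2,\Vert cp\Vert_2\le\Vert c\Vert_2$ estimates are all sound.
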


With the help of Proposition \ref{3.4} and Corollary \ref{3.3}, we can directly get the next corollary.

\begin{corollary} \label{3.5}
Let $\mathcal{M}$ be a type II$_{1}$ von Neumann algebra with
separable predual and $\rho$ a faithful normal tracial state on
$\mathcal{M}$. Suppose $\{ a_{j}: j \in \mathbb{N} \}$ is a sequence of elements
in $\mathcal{M}$ that generates $\mathcal{M}$ as a von Neumann
algebra. Then $\mathcal{M}$ has Property $\Gamma$ if and only if for
any $n \in \mathbb{N}$, there exists a family of projections $\{
p_{ir}: 1 \leq i \leq n, r \in \mathbb{N} \}$ such that
\begin{enumerate}
\item [(i)] for each $r \in \mathbb{N}$, $\{ p_{ir}: 1\leq i \leq n \}$ is a set of $n$ equivalent orthogonal projections in $\mathcal{M}$ with sum $I$;

\item [(ii)] for each $1 \leq i \leq n$ and $j \in \mathbb{N}$, $\lim\limits_{r \to \infty} \Vert p_{ir}a_{j}-a_{j}p_{ir} \Vert_{2} = 0$, where $\Vert \cdot \Vert_{2}$ is the $2$-norm induced by $\rho$ on $\mathcal{M}$.\end{enumerate}
\end{corollary}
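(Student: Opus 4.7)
My plan is to derive Corollary \ref{3.5} from Proposition \ref{3.4} by showing that the set of elements with which a given family of projections commutes asymptotically in $2$-norm is a self-adjoint subalgebra of $\mathcal{M}$ that is also closed under strong-operator limits of bounded sequences; hence if it contains a generating set, it must be all of $\mathcal{M}$.

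The forward direction is immediate: if $\mathcal{M}$ has Property $\Gamma$, Proposition \ref{3.4} supplies, for each $n\in \mathbb{N}$, a family $\{p_{ir}\}$ satisfying condition (i) such that $\lim_r \Vert p_{ir}a - a p_{ir}\Vert_2 = 0$ for every $a \in \mathcal{M}$, so in particular for each generator $a_j$. For the reverse direction, let $\{p_{ir}: 1\le i\le n,\ r\in\mathbb{N}\}$ be a family given by the hypothesis and introduce
$$S = \{a \in \mathcal{M} : \lim_{r\to\infty} \Vert p_{ir}a - a p_{ir}\Vert_2 = 0 \text{ for each } 1\le i\le n\}.$$
First I would verify algebraically that $S$ is a unital $*$-subalgebra of $\mathcal{M}$: closure under addition and scalar multiples is trivial; closure under the adjoint follows from $\Vert x\Vert_2 = \Vert x^*\Vert_2$ (a consequence of $\rho$ being tracial); and closure under multiplication follows from the Leibniz identity $[p,ab] = [p,a]b + a[p,b]$ together with the elementary inequalities $\Vert xy\Vert_2 \le \Vert x\Vert_2 \Vert y\Vert$ and $\Vert xy\Vert_2 \le \Vert x\Vert \Vert y\Vert_2$. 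By hypothesis $\{a_j\}\subseteq S$, so $S$ contains the unital $*$-algebra $\mathcal{A}$ generated by $\{a_j\}$, which is SOT-dense in $\mathcal{M}$ by the generating assumption.

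Next I would upgrade this to $S = \mathcal{M}$. Given $a \in \mathcal{M}$, Kaplansky's density theorem (together with metrizability of the strong operator topology on bounded sets, which uses separability of the predual) produces a sequence $\{a_m\} \subseteq \mathcal{A}$ with $\Vert a_m\Vert \le \Vert a\Vert$ and $a_m \to a$ in SOT; by Lemma \ref{3.2}, this is equivalent to $\Vert a_m - a\Vert_2 \to 0$. Writing $p_{ir}a - a p_{ir} = p_{ir}(a - a_m) + (p_{ir}a_m - a_m p_{ir}) + (a_m - a)p_{ir}$ and using $\Vert p_{ir}\Vert\le 1$ yields
$$\Vert p_{ir}a - a p_{ir}\Vert_2 \le 2\Vert a - a_m\Vert_2 + \Vert p_{ir}a_m - a_m p_{ir}\Vert_2.$$
Taking $\limsup_r$ at fixed $m$ exploits $a_m \in S$ to give $\limsup_r \Vert p_{ir}a - a p_{ir}\Vert_2 \le 2\Vert a - a_m\Vert_2$; then letting $m\to\infty$ shows $a \in S$. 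Hence $S=\mathcal{M}$, and Proposition \ref{3.4} finishes the proof.

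The only subtle point is the double-limit step at the end, but the order $\limsup_r$ first, then $m\to\infty$, disposes of it cleanly, and everything else is formal algebra plus the standard relationship between SOT and the $2$-norm on bounded sets established in Lemma \ref{3.2}.
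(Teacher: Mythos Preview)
Your proof is correct and is essentially the argument the paper has in mind: the paper simply states that the corollary follows directly from Proposition \ref{3.4} and Corollary \ref{3.3}, and what you have written is precisely the standard density argument (via Lemma \ref{3.2}) that makes that ``directly'' explicit.
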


\begin{example}\label{example 1}
Let $\mathcal{A}_{1}$ be a type II$_{1}$ factor with separable
predual and $\mathcal{A}_{2}$ a finite von
Neumann algebra with separable predual.
Suppose $\mathcal{A}_{1}$ has Property $\Gamma$. Then the von
Neumann algebra tensor product $\mathcal{A}_{1} \otimes
\mathcal{A}_{2}$ is a type II$_{1}$ von Neumann algebra with separable predual and Property
$\Gamma$.

\end{example}

\begin{remark} \label{3.6}
Let $\mathcal{M}$ be a von Neumann algebra acting on a separable
Hilbert space $H$ and $\mathcal{Z}$ the center of
$\mathcal{M}$. Suppose $\mathcal{M}=\int_{X} \bigoplus
\mathcal{M}_{s} d\mu$ and $H=\int_{X} \bigoplus H_{s} d\mu$ are the
direct integral decompositions of $\mathcal{M}$ and $H$ over $(X,
\mu)$ relative to $\mathcal{Z}$. Take a countable $SOT$ dense self-adjoint subset
$\mathcal{F}$ of $\mathcal{M}$ and let $\mathcal{S}$ be the set of
all rational $*$-polynomials (i.e, coefficients from
$\mathbb{Q}+i \mathbb{Q}$) with variables from $\mathcal{F}$. We
observe that $\mathcal{S}$ is countable and $SOT$ dense in
$\mathcal{M}$. Take $\{ a_{j}: j \in \mathbb{N} \}$ to be the unit
ball of $\mathcal{S}$. By Kaplansky's Density Theorem, $\{ a_{j}: j
\in \mathbb{N} \}$ is $SOT$ dense in the unit ball
$(\mathcal{M})_{1}$.  By Definition 14.1.14 and Lemma 14.1.15 in
\cite{KR1}, $\{ a_{j}(s): j \in \mathbb{N} \}$ is $SOT$ dense in the
unit ball $(\mathcal{M}_{s})_{1}$ for almost every $s \in X$. In the
rest of this paper, when we mention a $SOT$ dense sequence $\{
a_{j}: j \in \mathbb{N} \}$of $(\mathcal{M})_{1}$ (or
$(\mathcal{M}')_{1}$), we always assume that this sequence has been
chosen such that $\{ a_{j}(s): j \in \mathbb{N} \}$ is $SOT$ dense
in the unit ball $(\mathcal{M}_{s})_{1}$ (or
$(\mathcal{M}'_{s})_{1}$) for almost every $s \in X$.
\end{remark}

\begin{lemma} \label{3.8}
If $H= \int_{X} \bigoplus H_{s} d\mu$ is a direct integral of separable Hilbert spaces and $\mathcal{A}$ is a decomposable von Neumann algebra (see definition in \cite{KR1}) over $H$ such that $\mathcal{A} \cong M_{m}(\mathbb{C})$, the $m \times m$ matrix algebra over $\mathbb{C}$ for some $m \in \mathbb{N}$. Then $\mathcal{A}_{s} \cong M_{m}(\mathbb{C})$ for almost every $s \in X$.
\end{lemma}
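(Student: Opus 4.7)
The plan is to produce a system of matrix units at the fibre level and then observe that it already exhausts $\mathcal{A}_s$. Since $\mathcal{A} \cong M_m(\mathbb{C})$, fix matrix units $\{e_{ij}\}_{i,j=1}^m \subset \mathcal{A}$, i.e.\ operators satisfying $e_{ij}^* = e_{ji}$, $e_{ij} e_{kl} = \delta_{jk}\, e_{il}$, and $\sum_{i=1}^m e_{ii} = I$. Each $e_{ij}$, being decomposable, decomposes as $e_{ij} = \int_X^{\oplus} e_{ij}(s)\, d\mu$.

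The (finitely many) matrix unit identities are equalities between decomposable operators, so by the uniqueness of the direct integral decomposition each one is valid at the fibre level off a Borel $\mu$-null set. Let $N$ be the union of these finitely many null sets, together with $\{ s \in X : H_s = 0 \}$ (also null by standard conventions on direct integrals). For every $s \in X \setminus N$ the family $\{e_{ij}(s)\}_{i,j=1}^m$ satisfies $e_{ij}(s)^* = e_{ji}(s)$, $e_{ij}(s) e_{kl}(s) = \delta_{jk}\, e_{il}(s)$, and $\sum_{i=1}^m e_{ii}(s) = I_{H_s} \neq 0$. The diagonal projections $e_{ii}(s)$ are then mutually orthogonal, pairwise equivalent via the partial isometries $e_{ij}(s)$, and sum to a non-zero operator, so each $e_{ii}(s)$ is itself non-zero. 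Thus $\{e_{ij}(s)\}$ is a genuine system of $m \times m$ matrix units in $B(H_s)$, and its linear span $\mathcal{B}_s$ is a $*$-subalgebra of $B(H_s)$ isomorphic to $M_m(\mathbb{C})$.

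To conclude that $\mathcal{A}_s = \mathcal{B}_s$ almost everywhere, I would use the fact that $\mathcal{A}$ is linearly spanned by $\{e_{ij}\}$: every $a \in \mathcal{A}$ has a unique expression $a = \sum_{i,j} \lambda_{ij}(a)\, e_{ij}$ with scalars $\lambda_{ij}(a) \in \mathbb{C}$, so $a(s) = \sum_{i,j} \lambda_{ij}(a)\, e_{ij}(s) \in \mathcal{B}_s$ almost everywhere. Hence $\{ a(s) : a \in \mathcal{A} \} \subseteq \mathcal{B}_s$ a.e., and the reverse inclusion is immediate since each $e_{ij}(s)$ already lies in this set. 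It follows that the fibre $\mathcal{A}_s$ appearing in the decomposition $\mathcal{A} = \int_X^{\oplus} \mathcal{A}_s\, d\mu$ coincides with $\mathcal{B}_s \cong M_m(\mathbb{C})$ for almost every $s \in X$.

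The only step that could be called an obstacle is the measure-theoretic bookkeeping of the exceptional null sets on which the matrix unit relations might fail, but this is routine: there are only finitely many such relations, so their union of exceptional sets is still a $\mu$-null Borel set, and everything that follows is finite-dimensional linear algebra.
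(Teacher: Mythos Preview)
Your proof is correct. Both your argument and the paper's rest on the same underlying fact---that the fibre map $a \mapsto a(s)$ is a unital $*$-homomorphism from $\mathcal{A}$ onto $\mathcal{A}_s$ for almost every $s$---but the two arrive there differently. The paper simply cites Theorem~14.1.13 of \cite{KR1} (which gives this homomorphism property for separable C$^*$-subalgebras) and then invokes the simplicity of $M_m(\mathbb{C})$ to conclude that each such homomorphism is an isomorphism. You instead work by hand: you push the finitely many matrix-unit relations down to the fibres, check non-degeneracy, and then use that the $e_{ij}$ linearly span $\mathcal{A}$ to identify $\mathcal{A}_s$ with the span of the $e_{ij}(s)$. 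Your route is more elementary and self-contained (no appeal to the cited theorem or to simplicity), while the paper's is shorter and shows how the result is a one-line consequence of standard direct-integral machinery. Either is perfectly adequate here.
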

\begin{proof}
Notice that $\mathcal{A}$ is also a finite dimensional C$^*$-algebra. By Theorem 14.1.13 in \cite{KR1} and the fact that $\mathcal{A}$ is separable, the map from
$\mathcal{A}$ to $\mathcal{A}_{s}$ given by $a \to a(s)$ is a unital
$*$-homomorphism for almost every $s \in X$. Since
$\mathcal{A} \cong M_{m}(\mathbb{C})$, $\mathcal{A}$ is simple.
Therefore $\mathcal{A}_{s} \cong M_{m}(\mathbb{C})$ for almost every
$s \in X$.
\end{proof}

The following Proposition gives a useful characterization of a type II$_1$ von Neumann algebra with Property $\Gamma$.
\begin{proposition} \label{3.7}
Let $\mathcal{M}$ be a type II$_{1}$ von Neumann algebra acting on a
separable Hilbert space $H$ and   $\mathcal{Z}$ the center of
$\mathcal{M}$.
Suppose  $\rho$ is a faithful normal tracial state on $\mathcal{M}$
and a $2$-norm $\Vert \cdot \Vert_{2}$ on $\mathcal{M}$ is defined by  $\Vert a \Vert_{2}=\sqrt{\rho(a^{*}a)}$ for any $a \in
\mathcal{M}$.
Suppose $\mathcal{M}=\int_{X} \bigoplus
\mathcal{M}_{s} d\mu$ and $H=\int_{X} \bigoplus H_{s} d\mu$ are the
direct integral decompositions of $\mathcal{M}$ and $H$ over $(X,
\mu)$ relative to $\mathcal{Z}$. Then
\begin{enumerate}
\item [(i)] $\mathcal{M}$ has Property
$\Gamma$.
\item [(ii)]There exists a positive integer $n_0\ge 2$ such that
\begin{enumerate} \item []
 for any  $  \epsilon >0$,  and any $  a_{1},
a_{2}, \dots, a_{k} \in \mathcal{M}$, there exist orthogonal
equivalent projections $ p_{1}$, $p_{2}$, $\dots,$ $ p_{n_0}$ in $\mathcal M$ summing to
$I$ satisfying
$$\Vert p_{i}a_{j}-a_{j}p_{i} \Vert_{2} < \epsilon, \qquad 1 \leq i \leq n_0, 1 \leq j \leq k.$$
\end{enumerate}
\item [(iii)] $\mathcal{M}_{s}$ is a type II$_{1}$ factor
with Property $\Gamma$ for almost every $s \in X$.

\end{enumerate}
\end{proposition}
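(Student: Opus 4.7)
The plan is to prove the three implications cyclically as $(i) \Rightarrow (ii) \Rightarrow (iii) \Rightarrow (i)$. The implication $(i) \Rightarrow (ii)$ is immediate: take $n_0 = 2$ in Corollary \ref{3.3}(b).

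For $(ii) \Rightarrow (iii)$, by Lemma \ref{2.1} each $\mathcal{M}_s$ is almost surely a type II$_1$ factor. Fix a countable SOT-dense subset $\{a_j\} \subseteq (\mathcal{M})_1$ as in Remark \ref{3.6}. For each $(m, r) \in \mathbb{N}^2$, apply (ii) with $\epsilon = 1/r$ and $a_1, \ldots, a_m$ to obtain $n_0$ orthogonal equivalent projections $p_1^{(m,r)}, \ldots, p_{n_0}^{(m,r)} \in \mathcal{M}$ summing to $I$. By Proposition \ref{2.5},
$$\sum_{i=1}^{n_0}\sum_{j=1}^{m}\int_X \bigl\| p_i^{(m,r)}(s) a_j(s) - a_j(s) p_i^{(m,r)}(s) \bigr\|_{2,\rho_s}^{2}\, d\mu(s) < n_0 m / r^2.$$
A Chebyshev plus Borel--Cantelli argument along a rapidly growing subsequence $r = r(m)$ produces a full-measure Borel set $X_0 \subseteq X$ such that for every $s \in X_0$, every $m \in \mathbb{N}$, and every $\delta > 0$, there exist $n_0$ orthogonal equivalent projections in $\mathcal{M}_s$ summing to $I(s)$ that approximately commute with $a_1(s), \ldots, a_m(s)$ within $\delta$. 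Kaplansky's density theorem extends this from the dense countable set to all of $(\mathcal{M}_s)_1$. To pass from the $n_0$-projection condition to Dixmier's condition for arbitrary $n$, form the trace-zero unitary $u = \sum_{k=1}^{n_0} e^{2\pi i k/n_0} p_k$, realizing Murray--von Neumann's Property $\Gamma$ in $\mathcal{M}_s$; this is equivalent to the Dixmier form by Corollary \ref{3.3.5}.

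For $(iii) \Rightarrow (i)$, fix $n \in \mathbb{N}$, $a_1, \ldots, a_k \in \mathcal{M}$, and $\epsilon > 0$. Let $\{U_s : H_s \to K\}$ be as in Remark \ref{2.4} and let $\mathcal{B}$ denote the $*$-strong unit ball of $B(K)$. Consider the subset $E \subseteq X \times \mathcal{B}^n$ consisting of tuples $(s, b_1, \ldots, b_n)$ such that the $U_s^* b_i U_s$ are $n$ orthogonal equivalent projections in $\mathcal{M}_s$ summing to $I(s)$ and $\|[U_s^* b_i U_s, a_j(s)]\|_{2,\rho_s} < \epsilon$ for all $i \le n$, $j \le k$. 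Property $\Gamma$ of the fibers $\mathcal{M}_s$ makes each $E_s$ nonempty for $\mu$-a.e. $s$. Each defining predicate---self-adjointness, idempotence, orthogonality, equivalence (via equality of traces in the II$_1$ factor), summing to $I(s)$, and the commutator bound---can be expressed as the smallness or vanishing of a $\|\cdot\|_{2,\rho_s}$-norm of a polynomial expression in the $U_s^* b_\ell U_s$, and hence is Borel in $(s, b)$ by Proposition \ref{2.5}. The Kuratowski--Ryll-Nardzewski measurable selection theorem then produces measurable sections $s \mapsto b_i(s)$, which assemble through the direct integral into projections $p_i \in \mathcal{M}$ meeting the requirements of Corollary \ref{3.3}(b); Proposition \ref{3.4} then gives $\mathcal{M}$ Property $\Gamma$.

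The main obstacle is verifying the Borel measurability of the algebraic constraints defining $E$ in a form that fits Proposition \ref{2.5}; each predicate must be rewritten as the vanishing or smallness of a $\|\cdot\|_{2,\rho_s}$-norm of a polynomial expression in $U_s^* b_\ell U_s$. The passage from $n_0$-Property $\Gamma$ to full Property $\Gamma$ within a II$_1$ factor, needed in $(ii) \Rightarrow (iii)$, is classical and reduces to the trace-zero unitary trick.
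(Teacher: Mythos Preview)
Your approach matches the paper's: the same cycle of implications, the same passage to almost-sure fiberwise approximate centrality in $(ii)\Rightarrow(iii)$ (you use Borel--Cantelli where the paper uses diagonal subsequence extraction; both then need the Murray--von Neumann/Dixmier equivalence to upgrade from fixed $n_0$ to all $n$, which you make explicit and the paper leaves implicit), and the same measurable-selection strategy over the direct integral in $(iii)\Rightarrow(i)$.

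One technical slip: Kuratowski--Ryll-Nardzewski requires closed-valued multifunctions, but your strict inequality $\|[U_s^*b_iU_s,a_j(s)]\|_{2,\rho_s}<\epsilon$ makes the sections $E_s$ non-closed, and not all of your predicates are literally of the $\|\cdot\|_{2,\rho_s}$ form covered by Proposition~\ref{2.5} (projection, orthogonality, and summing to $I$ are closed conditions in $\mathcal{B}^n$ not involving $\rho_s$ at all, while membership in $\mathcal{M}_s$ is commutation with a countable SOT-dense set in $\mathcal{M}_s'$). The paper handles this by verifying directly that the defining set is Borel, hence analytic, and invoking the selection principle for analytic sets (Theorem 14.3.6 in \cite{KR1}) rather than KRN; it also encodes an entire sequence of approximating projection systems (indexed by $t\in\mathbb{N}$, with error $1/t$) into a single selection, rather than selecting once per $\epsilon$. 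Either adjustment repairs your argument.
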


\begin{proof}
Since $\mathcal{M}$ is a type II$_{1}$ von Neumann algebra, by Lemma
\ref{2.1},  the component $\mathcal{M}_{s}$ is a type II$_1$ factor
for almost every $s \in X$. We may assume $\mathcal{M}_{s}$ is a
type II$_{1}$ factor with a trace $\tau_{s}$ for each $s \in X$.

 By Lemma \ref{2.2}, there is a
positive faithful normal tracial linear functional $\rho_{s}$ on $\mathcal{M}_{s}$
for almost every $s \in X$ such that $\rho(a)=\int_{X} \rho_{s}(a(s))d\mu$
for each $a$ in $\mathcal{M}$. We may assume $\rho_{s}$ is positive, faithful, normal and tracial
for each $s \in X$. Hence for each $s \in X$, $\rho_{s}$ is a
positive scalar multiple of the unique trace $\tau_{s}$ on the type
II$_{1}$ factor $\mathcal{M}_{s}$.

Let $\{ a_{j}: j \in \mathbb{N} \}, \{ a'_{j}: j \in \mathbb{N} \}$
be $SOT$ dense  subsets of the unit balls $\mathcal{M}_1,
(\mathcal{M}')_1$ of $\mathcal{M}$ and $\mathcal{M}'$ respectively.
By Proposition 14.1.24 in \cite{KR1}, we may assume that
$(\mathcal{M}')_{s}=(\mathcal{M}_{s})'$ for every $s \in X$ and we
use the notation $\mathcal{M}'_{s}$ for both. By Remark \ref{3.6},
we may assume $\{ a_{j}(s): j \in \mathbb{N} \}$ and $\{ a'_{j}(s):
j \in \mathbb{N} \}$ are $SOT$ dense in $(\mathcal{M}_{s})_{1}$ and
$(\mathcal{M}'_{s})_{1}$ for every $s \in X$.

(i)$\Rightarrow$ (ii):  The result is clear from Corollary \ref{3.3}, Corollary \ref{3.3.5} and Remark \ref{3.3.6}.

(ii)$\Rightarrow$ (iii): For this direction, we suppose (ii) holds. Notice that $\mathcal M$ acts on a separable Hilbert space, whence $\mathcal M$ is countably generated in strong operator topology. By
(ii), there exists a sequence of systems of matrix units $\{ \{e_{i,j}^{(r)}\}_{   i , j=1}^{ n_0} \ : \ r \in \mathbb{N} \}$ such that
\begin{enumerate}
\item [(A)] for each $r \in \mathbb{N}$,  we have $$\text{$\sum_{i=1}^{n_0} e_{i,i}^{(r)}=I$, \quad $(e_{i,j}^{(r)})^*= e_{j,i}^{(r)}$ \quad and  \ \
$e_{i,j}^{(r)}e_{j,k}^{(r)}=e_{i,k}^{(r)}$ \ \ \ for all $1\le
i,j,k\le n_0$.}$$
\item [(B)] for each $1 \leq i \leq n_0$, $\lim\limits_{r \to \infty} \Vert e_{i,i}^{(r)}a-ae_{i,i}^{(r)} \Vert_{2}=0$  for any $a \in \mathcal{M}$.
\end{enumerate}
By condition (A) and Lemma \ref{3.8}, there exists a
 $\mu$-null subset $N_{0}$ of X such that, for each $r \in \mathbb{N}$, $ \{e_{i,j}^{(r)}(s)\}_{   i , j=1}^{ n_0} $ is a system of matrix units such that  $\sum_{i=1}^{n_0} e_{i,i}^{(r)}(s)=I_{s}$ (the identity in $\mathcal{M}_{s}$)  in $\mathcal{M}_{s}$ for each $s \in X \setminus N_{0}$. In the following, we let
 $$ p_{i,r}=e_{ii}^{(r)} \qquad \text {for all } 1\le i\le n_0, r\in \Bbb N.  $$ Therefore, without loss of generality,
we can  assume that \begin{enumerate}
\item [ (I)]
$\{ p_{1,r}(s), p_{2,r}(s), \dots, p_{n_0,r}(s) \}$ is a set of $n_0$ equivalent orthogonal projections with sum $I_{s}$ in $\mathcal{M}_{s}$ for every $r \in \mathbb{N}$ and every $s \in X$;
\item [ (II)] for each $1 \leq i \leq n_0$,
$$   \lim\limits_{r \to \infty} \Vert p_{i,r} a-ap_{i,r} \Vert_{2}=0  \qquad \text{  for any $a \in \mathcal{M}$}.
$$\end{enumerate}

In the following we will use   a diagonal selection process to  produce a subsequence $\{ r_{m}: m \in \mathbb{N} \}$ of $\{ r: r \in \mathbb{N} \}$ and a $\mu$-null subset $X_{0}$ of $X$ such that
\begin{eqnarray}
\lim\limits_{m \to \infty} \Vert p_{i,r_{m}}(s)a_{j}(s)-a_{j}(s)p_{i,r_{m}}(s) \Vert_{2,s}=0  \qquad \forall \ i \in \{ 1, 2, \dots, n_0\}  \text { and } \forall  \ s \in X \setminus X_{0},
\end{eqnarray}
where the $\Vert \cdot \Vert_{2,s}$ is the $2$-norm induced by the unique trace $\tau_{s}$ on each $\mathcal{M}_{s}$.

First, by Assumption (II), for each $i \in \{ 1, 2, \dots, n_0 \}$,
$$\lim\limits_{r \to \infty} \Vert p_{i,r}a_{1}-a_{1}p_{i,r} \Vert_{2}=\lim\limits_{r \to \infty} \int_{X} \rho_{s}((p_{i,r}(s)a_{1}(s)-a_{1}(s)p_{i,r}(s))^{*}(p_{i,r}(s)a_{1}(s)-a_{1}(s)p_{i,r}(s))) d\mu = 0.$$
Therefore there exists a $\mu$-null subset $Y_{1}$ of $X$ and a subsequence $\{ r_{1,m}: m \in \mathbb{N} \}$ of $\{ r: r \in \mathbb{N} \}$ such that
$$\lim\limits_{m \to \infty} \rho_{s}((p_{i,r_{1,m} }(s)a_{1}(s)-a_{1}(s)p_{i,r_{1,m}}(s))^{*}(p_{i,r_{1,m}}(s)a_{1}(s)-a_{1}(s)p_{1,r_{1,m}}(s))) = 0$$
for any $s \in X \setminus Y_{1} $ and any $i \in \{ 1, 2, \dots, n_0
\}$. Since $\rho_{s}$ is a positive scalar multiple of the unique
trace $\tau_{s}$ on the type II$_{1}$ factor $\mathcal{M}_{s}$, we
obtain
$$
\lim\limits_{m \to \infty}\Vert p_{i,r_{1,m}} (s)a_{1}(s)-a_{1}(s)p_{i,r_{1,m} }(s)\Vert_{2,s} = 0
$$
for any $i \in \{ 1, 2, \dots, n \}$ and any $s \in X \setminus Y_{1}$, where $\Vert \cdot \Vert_{2,s}$ is the $2$-norm on $\mathcal{M}_{s}$ induced by $\tau_{s}$.

Again, there is a subsequence $\{ r_{2,m} : m \in \mathbb{N} \}$ of $\{ r_{1,m} : m \in \mathbb{N} \}$ and a $\mu$-null subset $Y_{2}$ of $X$ such that
$$
\lim\limits_{m \to \infty} \Vert p_{i,r_{2,m}}(s)a_{2}(s)-a_{2}(s)p_{i,r_{2,m}}(s) \Vert_{2,s}=0
$$
for any $i \in \{ 1, 2, \dots, n_0 \}$ and any $s \in X\setminus Y_{2}$.

Continuing in this way, we  obtain a subsequence $\{ r_{k,m}: m \in \mathbb{N} \}$ of $\{ r_{k-1,m}: m \in \mathbb{N} \}$ and a $\mu$-null subset $Y_{k}$ for each $k\ge 2$, satisfying
$$
\lim\limits_{m \to \infty} \Vert p_{i,r_{k,m}}(s)a_{k}(s)-a_{k}(s)p_{i,r_{k,m}}(s) \Vert_{2,s}=0
$$
for any $i \in \{ 1, 2, \dots, n_0 \}$ and any $s \in X\setminus Y_{k}$. Now we apply the diagonal selection by letting $r_{m}=r_{m,m}$ for each $m \in \mathbb{N}$ to these subsequences and obtain that
\begin{align}
\lim\limits_{m \to \infty} \Vert p_{i,r_{m}}(s)a_{j}(s)-a_{j}(s)p_{i,r_{m}}(s) \Vert_{2,s} = 0 \label{8}
 \end{align}
for any $i \in \{ 1, 2, \dots, n_0 \}$, $j \in \mathbb{N}$ and $s \in X \setminus X_{0}$, where $X_{0}=\cup_{k \in \mathbb{N}} Y_{k}$ is a $\mu$-null subset of $X$.

Since $\{ a_{j}: j \in \mathbb{N} \}$ is $SOT$ dense in the unit ball of $\mathcal{M}_{s}$ for each $s \in X$, (\ref{8}) implies that, for any $i \in \{ 1, 2, \dots, n_0 \}$, $s \in X \setminus X_{0}$ and any $a \in \mathcal{M}_{s}$,
\begin{eqnarray}
\lim\limits_{m \to \infty} \Vert p_{i,r_{m}}(s)a-a p_{i,r_{m}}(s) \Vert_{2,s} = 0. \label{9}
\end{eqnarray}
It follows from (\ref{9}) and Assumption (I) that $\mathcal{M}_{s}$ is a type II$_{1}$
factor with Property $\Gamma$ for almost every $s \in X$.

(iii) $\Rightarrow$ (i):  Suppose $\mathcal{M}_{s}$ is a type
II$_{1}$ factor with Property $\Gamma$ for almost every $s \in X$.
We may assume that for every $s \in X$, $\mathcal{M}_{s}$ is a type
II$_{1}$ factor with Property $\Gamma$.

By Remark \ref{2.4}, we can obtain a separable Hilbert space $K$ and a family of unitaries $\{ U_{s}: H_{s} \to K;  s \in X \}$ such that $s \to U_{s}x(s)$ and $s \to U_{s}a(s)U_{s}^{*}$ are measurable for any $x \in H$ and any decomposable operator $a \in B(H)$.  Let $\mathcal{B}$ be the unit ball of self-adjoint elements in $B(K)$ equipped with the $*$-strong operator topology. Then it is metrizable by setting $d(S,T)=\sum_{m=1}^{\infty} 2^{-m} (\Vert (S-T)e_{m} \Vert + \Vert (S^{*}-T^{*})e_{m}  \Vert)$ for any $S, T \in \mathcal{B}$, where $\{ e_{m} \}$ is an orthonormal basis of $K$. The metric space $(\mathcal{B}, d)$ is complete and separable. Now let $\mathcal{B}_{1} =\mathcal{B}_{2} =\dots =\mathcal{B}_{l} =\dots =\mathcal{B}$ and $\mathcal{C} =\prod\limits_{ l \in \mathbb{N}} \mathcal{B}_{l}$ provided with the product topology of the  $*$-strong operator topology on each $B_{l}$. It follows that $\mathcal{C}$ is metrizable and it's also a complete separable metric space.

Replacing $a_{0}$ by $a_{j}$ for $j\in \Bbb N$,   we apply Proposition \ref{2.5} countably many times and obtain positive, faithful, normal, tracial linear functionals $\rho_s$ on  $\mathcal M_s$ (almost everywhere)  and a Borel $\mu$-null subset $N$ of $X$ such that,
\begin{enumerate}
\item [(1)] $\rho(a)=\int_X \rho_s(a(s))d\mu$ for every $a\in\mathcal M$;
\item [(2)] for any $j \in \mathbb{N}$, the maps
\begin{equation} s \to \rho_{s}((a_{j}U_{s}^{*}bU_{s}-U_{s}^{*}bU_{s}a_{j}(s))^{*}(a_{j}U_{s}^{*}bU_{s}-U_{s}^{*}bU_{s}a_{j}(s))) \label{equa3.10.1}\end{equation}
and
\begin{equation}s \to \rho_{s}(U_{s}^{*}bU_{s})\label{equa3.10.2}\end{equation}
from $X$ to $\mathbb{C}$ are Borel measurable when restricted to $X \setminus N$.
\end{enumerate}

We denote by $(s, (Q_{11}, Q_{21}, \dots, Q_{n1}, Q_{12}, Q_{22}, \dots, Q_{n2}, \dots))$   an element in $X\times \mathcal C$.
Since $b \to b^{*}$ and $b \to b^{2}$ are $*$-$SOT$ continuous from $\mathcal{B}$ to $\mathcal{B}$, the maps
\begin{align}
(s, (Q_{11}, Q_{21}, \dots, Q_{n1}, Q_{12}, Q_{22}, \dots, Q_{n2}, \dots)) &\to Q_{it}, \label{13}\\
(s, (Q_{11}, Q_{21}, \dots, Q_{n1}, Q_{12}, Q_{22}, \dots, Q_{n2}, \dots)) &\to Q_{it}^{2}, \label{14}\\
(s, (Q_{11}, Q_{21}, \dots, Q_{n1}, Q_{12}, Q_{22}, \dots, Q_{n2}, \dots)) &\to Q_{it}^{*} \label{15}
\end{align}
are Borel measurable from $X \times \mathcal{C}$ to $\mathcal{B}$.

By Remark \ref{2.4}, the map $s \to U_{s}a'_{j}(s)U_{s}^{*}$ from
$X$ to $\mathcal{B}$ is measurable for every $j \in \mathbb{N}$.
Therefore, by Lemma 14.3.1 in \cite{KR1}, there exists a Borel
$\mu$-null subset $N'$ of $X$ such that the map  $s \to
U_{s}a'_{j}(s)U_{s}^{*}$ is Borel measurable when restricted to
$X \setminus N'$ for every $j \in \mathbb{N}$. Hence the maps
\begin{align}
(s, (Q_{11}, Q_{21}, \dots, Q_{n1}, Q_{12}, Q_{22}, \dots, Q_{n2}, \dots)) &\to Q_{it}U_{s}a'_{j}(s)U_{s}^{*}, \label{16}\\
(s, (Q_{11}, Q_{21}, \dots, Q_{n1}, Q_{12}, Q_{22}, \dots, Q_{n2}, \dots)) &\to U_{s}a'_{j}(s)U_{s}^{*}Q_{it} \label{17}
\end{align}
are Borel measurable when restricted to $(X \setminus N') \times \mathcal{C}$ for every $j \in \mathbb{N}$.

Since the functionals $\rho_{s}$ are chosen such that the maps
$$s \to \rho_{s}((a_{j}U_{s}^{*}bU_{s}-U_{s}^{*}bU_{s}a_{j}(s))^{*}(a_{j}U_{s}^{*}bU_{s}-U_{s}^{*}bU_{s}a_{j}(s)))$$
and
$$s \to \rho_{s}(U_{s}^{*}bU_{s})$$
are Borel measurable when restricted to $X \setminus N$, where $N$ is a Borel $\mu$-null subset of $X$, the maps
\begin{eqnarray}
&&(s, (Q_{11}, Q_{21}, \dots, Q_{n1}, Q_{12}, Q_{22}, \dots, Q_{n2}, \dots))  \notag \\
&&\to \rho_{s}((a_{j}(s)U_{s}^{*}Q_{it}U_{s}-U_{s}^{*}Q_{it}U_{s}a_{j}(s))^{*}(a_{j}(s)U_{s}^{*}Q_{it}U_{s}-U_{s}^{*}Q_{it}U_{s}a_{j}(s))) \label{18}
\end{eqnarray}
and
\begin{eqnarray}
(s, (Q_{11}, Q_{21}, \dots, Q_{n1}, Q_{12}, Q_{22}, \dots, Q_{n2}, \dots)) \to \rho_{s}(U_{s}^{*}Q_{it}U_{s}) \label{19}
\end{eqnarray}
are Borel measurable when restricted to $(X\setminus N) \times \mathcal{C}$ for each $j \in \mathbb{N}$.

Take $N_{0}=N \cup N'$. Then we have the following claim.

{\bf Claim \ref{3.7}.1.} {\em $N_{0}$ is a Borel $\mu$-null subset of $X$ and the maps (\ref{13})-(\ref{19}) are Borel measurable when restricted to $X \setminus N_{0}$.}

Next we   introduce the following subset $\eta$ of  $(X\setminus N_{0}) \times \mathcal{C}$.

{\em Let $  \eta$ be a subset of $(X\setminus N_{0}) \times \mathcal{C}$ that consists of all these elements
$$(s, (Q_{11}, Q_{21}, \dots, Q_{n1}, Q_{12}, Q_{22}, \dots, Q_{n2}, \dots, Q_{1t}, Q_{2t}, \dots, Q_{nt}, \dots )) \in (X\setminus N_{0}) \times \mathcal{C}$$
satisfying
\begin{enumerate}
\item [(a)]  for any $1 \leq i \leq n, t \in \mathbb{N}$,
\begin{equation}
 Q_{it}=Q_{it}^{*}=Q_{it}^{2} \neq 0; \label{equa 3.7.5}
\end{equation}
\item [(b)]  for any $1 \leq i \leq n$, $t, j \in \mathbb{N}$,
\begin{equation}
 Q_{it}U_{s}a'_{j}(s)U_{s}^{*}=U_{s}a'_{j}(s)U_{s}^{*}Q_{it}; \label{equa 3.7.6}
\end{equation}
\item [(c)] for any $1 \leq i \leq n$, $t \in \mathbb{N}, 1 \leq j \leq t$,
\begin{equation}
 \rho_{s}((a_{j}(s)U_{s}^{*}Q_{it}U_{s}-U_{s}^{*}Q_{it}U_{s}a_{j}(s))^{*}(a_{j}(s)U_{s}^{*}
 Q_{it}U_{s}-U_{s}^{*}Q_{it}U_{s}a_{j}(s)))<1/t;  \label{equa 3.7.7}
\end{equation}
\item [(d)] for any $t \in \mathbb{N}$,
    \begin{equation}
 \rho_{s}(U_{s}^{*}Q_{1t}U_{s})= \dots =\rho_{s}(U_{s}^{*}Q_{nt}U_{s}) \ \text{ and } \ Q_{1t}+Q_{2t}+ \dots +Q_{nt}=I.  \label{equa 3.7.8}
\end{equation}
\end{enumerate}}

 We have the following claim.

{\bf Claim \ref{3.7}.2:} {\em The set $\eta$ is analytic.}

{Proof of Claim \ref{3.7}.2:} By Claim \ref{3.7}.1, we know the maps (\ref{13})-(\ref{19}) are
Borel measurable when restricted to $X \setminus N_{0}$. It follows that the set
$\eta$ is a Borel set. Thus by Theorem 14.3.5 in \cite{KR1}, $\eta$
is analytic. The proof of Claim \ref{3.7}.2 is completed.

{\bf Claim \ref{3.7}.3:} {\em Let $\pi$ be the projection of $X \times \mathcal{M}$ onto $X$. Then $\pi(\eta)= X \setminus N_{0}$.}

{Proof of  Claim \ref{3.7}.3:} Let
$$(s, (Q_{11}, Q_{21}, \dots, Q_{n1}, Q_{12}, Q_{22}, \dots, Q_{n2}, \dots, Q_{1t}, Q_{2t}, \dots, Q_{nt}, \dots ))$$
be an element in $\eta$. From the definitions of the set $ \eta$, it's not hard to see that condition (a) is equivalent to that each $Q_{it}$ is a nonzero projection. Since $\{ a'_{j}(s): j \in \mathbb{N} \}$ is $SOT$ dense in $(\mathcal{M}')_{1}$ for each $s \in X$, condition (b) is equivalent to the condition that $U_{s}^{*}Q_{it}U_{s} \in \mathcal{M}_{s}$. Notice that $\{ a_{j}(s): j \in \mathbb{N} \}$ is $SOT$ dense in $(\mathcal{M})_{1}$ for each $s \in X$, condition (c) is equivalent to
$$\lim\limits_{t \to \infty} \rho_{s}((aU_{s}^{*}Q_{it}U_{s}-U_{s}^{*}Q_{it}U_{s}a)^{*}(aU_{s}^{*}Q_{it}U_{s}-U_{s}^{*}Q_{it}U_{s}a))=0$$
for any $a \in \mathcal{M}_{s}$.  Furthermore, $\rho_{s}$ is a positive scalar multiple of $\tau_{s}$ on $\mathcal{M}_{s}$ for each $s \in X$, it follows that condition (c)  is equivalent to
$$\lim\limits_{t \to \infty} \Vert aU_{s}^{*}Q_{it}U_{s}-U_{s}^{*}Q_{it}U_{s}a \Vert_{2,s} = 0$$
for any $a \in \mathcal{M}_{s}$. Moreover,  $(s, (Q_{11}, Q_{21}, \dots, Q_{n1}, Q_{12}, Q_{22}, \dots, Q_{n2}, \dots, Q_{1t}, Q_{2t}, \dots, Q_{nt}, \dots ))$  satisfies condition (a) and condition (d) if and only if $U_{s}^{*}Q_{1t}U_{s}, U_{s}^{*}Q_{2t}U_{s}, \dots, U_{s}^{*}Q_{nt}U_{s}$ are $n$ equivalent projections in $\mathcal{M}_{s}$ with sum $I_{s}$ for each   $n\in \Bbb N$ and each $t \in \mathbb{N}$.

For each $s \in X$, notice that $\mathcal{M}_{s}$ is a type II$_{1}$ factor with Property
$\Gamma$. From the argument in the preceding paragraph,  there exist projections $\{
U_{s}^{*}Q_{it}U_{s}: 1 \leq i \leq n, t \in \mathbb{N}\}$ in
$\mathcal{M}_{s}$ such that
$$(s, (Q_{11}, Q_{21}, \dots, Q_{n1}, Q_{12}, Q_{22}, \dots, Q_{n2}, \dots, Q_{1t}, Q_{2t}, \dots, Q_{nt}, \dots ))  \in X \times \mathcal{C}$$
satisfies conditions (a), (b), (c) and (d). Therefore the image of $\eta$ under $\pi$ is exactly $X \setminus N_{0}$.
The proof of Claim \ref{3.7}.3 is completed.

\vspace{0.2cm}

\noindent(Continue the proof of Proposition \ref{3.7}:) By Claim \ref{3.7}.2 and Claim \ref{3.7}.3, $\eta$ is analytic and the image of $\eta$
under $\pi$ is $X \setminus N_{0}$. By Theorem 14.3.6 in \cite{KR1}, there is a measurable
mapping
$$s \to  (Q_{11}^{(s)}, Q_{21}^{(s)}, \dots, Q_{n1}^{(s)}, Q_{12}^{(s)}, Q_{22}^{(s)}, \dots, Q_{n2}^{(s)}, \dots)$$
from $X\setminus N_{0}$ to $\mathcal{C}$ such that,  for   $s \in X \setminus N_{0}$ almost everywhere,  $$(s, (Q_{11}^{(s)}, Q_{21}^{(s)}, \dots, Q_{n1}^{(s)}, Q_{12}^{(s)}, Q_{22}^{(s)}, \dots, Q_{n2}^{(s)}, \dots))$$ satisfies conditions (a), (b), (c) and (d) (see (\ref{equa 3.7.5}), (\ref{equa 3.7.6}), (\ref{equa 3.7.7}) and (\ref{equa 3.7.8})). By defining $Q_{it}^{(s)}=0$ for any $t \in \mathbb{N}, 1 \leq i \leq n, s \in N_{0}$, we obtain {\em a measurable mapping
\begin{equation}s \to  (Q_{11}^{(s)}, Q_{21}^{(s)}, \dots, Q_{n1}^{(s)}, Q_{12}^{(s)}, Q_{22}^{(s)}, \dots, Q_{n2}^{(s)}, \dots)\label{equa 3.7.4}\end{equation}
from $X$ to $\mathcal{C}$ such that  such that,   for   $s \in X $ almost everywhere,  $$(s, (Q_{11}^{(s)}, Q_{21}^{(s)}, \dots, Q_{n1}^{(s)}, Q_{12}^{(s)}, Q_{22}^{(s)}, \dots, Q_{n2}^{(s)}, \dots))$$ satisfies conditions (a), (b), (c) and (d) (see (\ref{equa 3.7.5}), (\ref{equa 3.7.6}), (\ref{equa 3.7.7}) and (\ref{equa 3.7.8})).}

By (\ref{equa 3.7.4}), for any $t \in \mathbb{N}, 1 \leq i \leq n$
and any vectors $y,z \in H$, we have
$$\langle U_{s}^{*}Q_{it}^{(s)}U_{s}y(s), z(s) \rangle = \langle Q_{it}^{(s)}U_{s}y(s), U_{s}z(s) \rangle$$
and thus the map
$$s \to \langle U_{s}^{*}Q_{it}^{(s)}U_{s}y(s), z(s) \rangle$$
is measurable. Since
$$\vert \langle U_{s}^{*}Q_{it}^{(s)}U_{s}y(s), z(s) \rangle \vert \leq \Vert y(s) \Vert \Vert z(s) \Vert,$$
the map $s \to \langle U_{s}^{*}Q_{it}U_{s}y(s), z(s) \rangle$ is
integrable. By Definition 14.1.1 in \cite{KR1}, it follows that
\begin{eqnarray}
U_{s}^{*}Q_{it}^{(s)}U_{s}y(s)=(p_{it}y)(s) \label{20}
\end{eqnarray}
almost everywhere for some $p_{it}y$ in $H$. For each $t \in \mathbb{N}$,  (\ref{20}) implies that $p_{it}(s)=U_{s}^{*}Q_{it}^{(s)}U_{s}$ for almost every $s \in X$. Therefore $p_{it} \in \mathcal{M}$ for each $t \in \mathbb{N}$. Notice conditions (a) and (d) together imply that $U_{s}^{*}Q_{1t}^{(s)}U_{s}, U_{s}^{*}Q_{2t}^{(s)}U_{s}, \dots, U_{s}^{*}Q_{nt}^{(s)}U_{s}$ are $n$ orthogonal equivalent projections in $\mathcal{M}_{s}$ with sum $I_{s}$ for each $t \in \mathbb{N}$. It follows that $p_{1t}, p_{2t}, \dots, p_{nt}$ are $n$ orthogonal equivalent projections in $\mathcal{M}$ with sum $I$ for each $t \in \mathbb{N}$.

In order to show that $\mathcal{M}$ has Property $\Gamma$, it suffices to show that for any $i \in \{ 1, 2, \dots, n \}$ and $a \in \mathcal{M}$,
\begin{eqnarray}
\lim\limits_{t \to \infty} \rho((ap_{it}-p_{it}a)^{*}(ap_{it}-p_{it}a)) =0. \label{21}
\end{eqnarray}
By condition (c), we obtain that for each  $j \in \mathbb{N}, 1\leq i \leq n$ and $s \in X$,
\begin{eqnarray}
\lim\limits_{t \to \infty} \rho_{s}((a_{j}(s)U_{s}^{*}Q_{it}^{(s)}U_{s}-U_{s}^{*}Q_{it}^{(s)}U_{s}a_{j}(s))^{*}(a_{j}(s)U_{s}^{*}Q_{it}^{(s)}U_{s}-U_{s}^{*}Q_{it}^{(s)}U_{s}a_{j}(s)))=0. \label{22}
\end{eqnarray}
Fix $i \in \{ 1, 2, \dots, n \}$ and $j \in \mathbb{N}$. For each $t \in \mathbb{N}$, define a function $f_{t}: X \to \mathbb{C}$ such that
$$f_{t}(s)=\rho_{s}((a_{j}(s)U_{s}^{*}Q_{is}^{(s)}U_{s}-U_{s}^{*}Q_{it}^{(s)}U_{s}a_{j}(s))^{*}(a_{j}(s)U_{s}^{*}Q_{it}^{(s)}U_{s}-U_{s}^{*}Q_{it}^{(s)}U_{s}a_{j}(s))).$$
It follows from (\ref{22}) that
\begin{eqnarray}
\lim\limits_{t \to \infty} f_{t}(s) = 0 \label{23}
\end{eqnarray}
almost everywhere. By Lemma 14.1.9 in \cite{KR1}, for each $j \in
\mathbb{N}$, $\Vert a_{j} \Vert$ is the essential bound of $\{ \Vert
a_{j}(s) \Vert: s \in X \}$. Therefore
\begin{eqnarray*}
&&\Vert (a_{j}(s)U_{s}^{*}Q_{it}^{(s)}U_{s}-U_{s}^{*}Q_{it}^{(s)}U_{s}a_{j}(s))^{*}(a_{j}(s)U_{s}^{*}Q_{it}^{(s)}U_{s}-U_{s}^{*}Q_{it}^{(s)}U_{s}a_{j}(s)) \Vert\\
&\leq& 4 \Vert a_{j}(s) \Vert ^{2}\\
&\leq& 4 \Vert a_{j} \Vert ^{2}
\end{eqnarray*}
almost everywhere. Hence
\begin{eqnarray}
0 \leq f_{t}(s) \leq 4 \Vert a_{j} \Vert^{2} \rho_{s}(I_{s}) \label{24}
\end{eqnarray}
almost everywhere. Furthermore,
\begin{eqnarray}
\int_{X} 4 \Vert a_{j} \Vert^{2} \rho_{s}(I_{s}) d\mu=4 \Vert a_{j} \Vert^{2} \rho(I)=4 \Vert a_{j} \Vert^{2} \leq 4, \label{25}
\end{eqnarray}
by the Dominated Convergence Theorem, it follows from (\ref{23}), (\ref{24}) and (\ref{25}) that
\begin{eqnarray}
\lim\limits_{t \to \infty} \int_{X} \rho_{s}((a_{j}(s)U_{s}^{*}Q_{it}^{(s)}U_{s}-U_{s}^{*}Q_{it}^{(s)}U_{s}a_{j}(s))^{*}(a_{j}(s)U_{s}^{*}Q_{it}^{(s)}U_{s}-U_{s}^{*}Q_{it}^{(s)}U_{s}a_{j}(s))) d \mu = 0. \label{26}
\end{eqnarray}
Since $p_{it}(s)=Q_{it}^{(s)}$ for almost every $s \in X$, (\ref{26}) implies
\begin{eqnarray}
\lim\limits_{t \to \infty} \rho((a_{j}p_{it}-p_{it}a_{j})^{*}(a(j)p_{it}-p_{it}a_{j})) =0. \label{27}
\end{eqnarray}
From the fact that $\{ a_{j}: j \in \mathbb{N} \}$ is $SOT$ dense in the unit
ball of $\mathcal{M}$, we obtain equation (\ref{21}) from
(\ref{27}). Thus $\mathcal{M}$ is a type II$_{1}$ von Neumann
algebra with Property $\Gamma$.
\end{proof}

If $\mathcal{M}$ is a type II$_{1}$ von Neumann algebra, then by
Lemma 6.5.6 in \cite{KR1}, for any $m \in \mathbb{N}$, there is a unital
subalgebra $\mathcal{A}$ of $\mathcal{M}$ such that $\mathcal{A}
\cong M_{m}(\mathbb{C})$.

\begin{proposition} \label{3.9}
Suppose $\mathcal{M}$ is a type II$_{1}$ von Neumann algebra acting
on a separable Hilbert space $H$. Suppose further $\mathcal{A}$ is a
unital subalgebra of $\mathcal{M}$ such that  $\mathcal{A}\cong
M_{m}(\mathbb{C})$ for some $m \in \mathbb{N}$. Let
$\mathcal{N}=\mathcal{A}' \cap \mathcal{M}$. Then $\mathcal{M}$ has
Property $\Gamma$ if and only if $\mathcal{N}$ has Property
$\Gamma$.
\end{proposition}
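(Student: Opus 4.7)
The plan is to reduce the claim to the well-known factor-level statement via the direct integral machinery of Section 3. Fix a system of matrix units $\{e_{st}\}_{s,t=1}^{m}$ of $\mathcal{A}$. The internal identification $\mathcal{M} \cong M_{m}(\mathbb{C}) \otimes \mathcal{N}$, under which $\mathcal{A}$ corresponds to $M_{m}(\mathbb{C}) \otimes 1$ and $\mathcal{N}$ to $1 \otimes \mathcal{N}$, shows that the center $\mathcal{Z}$ of $\mathcal{M}$ coincides with the center of $\mathcal{N}$. Hence when we perform the direct integral decomposition $\mathcal{M} = \int_{X} \bigoplus \mathcal{M}_{s} \, d\mu$ relative to $\mathcal{Z}$, the algebra $\mathcal{N}$ already contains the diagonalizable operators and admits a direct integral decomposition over the same base space $(X,\mu)$, say $\mathcal{N} = \int_{X} \bigoplus \mathcal{N}_{s} \, d\mu$.

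Next I would identify the fibers as $\mathcal{N}_{s} = \mathcal{A}_{s}' \cap \mathcal{M}_{s}$ almost everywhere. By Lemma \ref{3.8}, $\mathcal{A}_{s} \cong M_{m}(\mathbb{C})$ a.e., and $\mathcal{A}_{s}$ is generated by the fiber matrix units $\{e_{st}(s)\}$. Fixing a countable SOT-dense sequence $\{x_{l}\}$ in $\mathcal{N}$, each $x_{l}$ commutes with every $e_{st}$ in $\mathcal{M}$, so the same relation holds at the fiber level outside a single null set, yielding $\mathcal{N}_{s} \subseteq \mathcal{A}_{s}' \cap \mathcal{M}_{s}$. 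The reverse inclusion follows because any decomposable operator whose fibers lie in $\mathcal{A}_{s}' \cap \mathcal{M}_{s}$ a.e.\ commutes with each $e_{st}$ globally and hence belongs to $\mathcal{N}$. Combined with Lemma \ref{2.1}, this shows that for almost every $s$ the fiber $\mathcal{N}_{s}$ is a type II$_{1}$ factor and $\mathcal{M}_{s} \cong M_{m}(\mathbb{C}) \otimes \mathcal{N}_{s}$.

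It then remains to verify the classical factor fact that, for type II$_{1}$ factors related by such a finite amplification, Property $\Gamma$ passes in both directions. The easy direction simply lifts a Murray--von Neumann central sequence $(u_{n}) \subseteq \mathcal{N}_{s}$ to $(1 \otimes u_{n}) \subseteq \mathcal{M}_{s}$, noting that it commutes exactly with $M_{m}(\mathbb{C}) \otimes 1$. For the other direction, given a central sequence $(u_{n}) \subseteq \mathcal{M}_{s}$ with $\tau(u_{n})=0$, approximate commutation with the matrix units $e_{st}$ forces $\|u_{n} - E(u_{n})\|_{2} \to 0$, where $E = \mathrm{tr}_{M_{m}} \otimes \mathrm{id}$ is the trace-preserving conditional expectation onto $\mathcal{N}_{s}$; a small functional-calculus adjustment then yields the desired central sequence in $\mathcal{N}_{s}$. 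Combining this equivalence with Proposition \ref{3.7} applied to both $\mathcal{M}$ and $\mathcal{N}$ finishes the argument. The main technical obstacle is the fiber identification $\mathcal{N}_{s} = \mathcal{A}_{s}' \cap \mathcal{M}_{s}$ on a single conull set, which requires coordinating the measurable selections from $\mathcal{A}$ and $\mathcal{N}$ so that the two inclusions hold simultaneously almost everywhere.
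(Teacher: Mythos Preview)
Your proposal is correct and follows essentially the same route as the paper: decompose $\mathcal{M}$ centrally, use Lemma~\ref{3.8} to identify $\mathcal{A}_{s}\cong M_{m}(\mathbb{C})$ and hence $\mathcal{N}_{s}=\mathcal{A}_{s}'\cap\mathcal{M}_{s}$ almost everywhere, invoke the factor-level equivalence (the paper cites Lemma~5.1 of \cite{EFAR2} rather than sketching it), and finish with Proposition~\ref{3.7}. The only cosmetic difference is that the paper handles the implication $\mathcal{N}$ has $\Gamma$ $\Rightarrow$ $\mathcal{M}$ has $\Gamma$ directly at the global level rather than fiberwise, and for the converse it says ``by a similar argument as the proof of Proposition~\ref{3.7}'' instead of invoking it verbatim for $\mathcal{N}$---your explicit observation that $\mathcal{Z}(\mathcal{N})=\mathcal{Z}(\mathcal{M})$ is exactly what is needed to apply Proposition~\ref{3.7} to $\mathcal{N}$ over the same base space.
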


\begin{proof}
By Lemma 11.4.11 in \cite{KR1}, $\mathcal{M} \cong \mathcal{A}
{\otimes} \mathcal{N} \cong M_{m}(\mathbb{C}) {\otimes}
\mathcal{N}$. It is trivial to see that if $\mathcal{N}$ has
Property $\Gamma$, then $\mathcal{M}$ has Property $\Gamma$. Thus we
only need to show that Property $\Gamma$ of $\mathcal{M}$ implies
Property $\Gamma$ of $\mathcal{N}$.

Suppose $\mathcal{M}$ has Property $\Gamma$. Let $\mathcal{M}=
\int_{X} \bigoplus \mathcal{M}_{s}d\mu$ and $H=\int_{X} \bigoplus
H_{s} d\mu$ be the direct integral decompositions relative to the
center $\mathcal{Z}$ of $\mathcal{M}$. Since $\mathcal{M}$ is a type
II$_{1}$ von Neumann algebra with Property $\Gamma$, by Proposition
\ref{3.7}, $\mathcal{M}_{s}$ is a type II$_{1}$ factor with Property
$\Gamma$ for almost every $s \in X$. We may assume $\mathcal{M}_{s}$
is a type II$_{1}$ factor with Property $\Gamma$ for every $s \in
X$.

Since $\mathcal{A} \cong M_{m}(\mathbb{C})$, by Lemma \ref{3.8}, we
may assume $\mathcal{A}_{s} \cong M_{m}(\mathbb{C})$ for every $s
\in X$. Since $\mathcal{N}=\mathcal{A}' \cap \mathcal{M}$,
$\mathcal{N}_{s}=\mathcal{A}'_{s} \cap \mathcal{M}_{s}$ for almost
every $s \in X$. Then by Lemma 11.4.11 in \cite{KR1},
$$\mathcal{M}_{s} \cong \mathcal{A}_{s} {\otimes} \mathcal{N}_{s} \cong M_{m}(\mathbb{C}) {\otimes} \mathcal{N}_{s}$$
for almost every $s \in X$. Since $\mathcal{M}_{s}$ is a type
II$_{1}$ factor with Property $\Gamma$ for every $s \in X$, by Lemma
5.1 in \cite{EFAR2}, $\mathcal{N}_{s}$ has Property $\Gamma$ for
almost every $s \in X$. By a similar argument as the proof of
Proposition \ref{3.7}, we can conclude that $\mathcal{N}$ has
Property $\Gamma$.
\end{proof}

\section{Hyperfinite II$_1$ subfactors in type II$_1$ von Neumann algebras}

Let $\mathcal{M}$ be a type II$_{1}$ von Neumann algebra with
separable predual and Property $\Gamma$. We will devote  this
section to the construction of a hyperfinite type II$_{1}$ subfactor
$\mathcal{R}$ of $\mathcal{M}$ such that
\begin{enumerate}
\item [(I)] $\mathcal{R}' \cap \mathcal{M} = \mathcal{Z}$, where $\mathcal{Z}$ is the center of $\mathcal{M}$ ;

\item [(II)] for any given $a_{1}, a_{2}, \dots, a_{k} \in \mathcal{M}, n \in \mathbb{N}$ and $\epsilon >0$, there exist orthogonal equivalent projections $p_{1}, p_{2}, \dots, p_{n}$ in $\mathcal{R}$ with sum $I$ such that
$$\Vert p_{i}a_{j}-a_{j}p_{i} \Vert_{2} < \epsilon,  i=1, 2, \dots;  j=1, 2, \dots, k,$$
where the $2$-norm $\Vert \cdot \Vert_{2}$ is given by $\Vert a \Vert_{2}=\sqrt{\rho(a^{*}a)}, \forall a \in \mathcal{M}$ for some faithful tracial state $\rho$ on $\mathcal{M}$.
\end{enumerate}
\begin{lemma} \label{3.10}
Let $\mathcal{M}$ be a type II$_{1}$ von Neumann algebra acting on a
separable Hilbert space $H$.  Let $m \in \mathbb{N}$ and $\mathcal{A} $ be  a unital subalgebra of $\mathcal{M}$ such that $\mathcal{A} \cong
M_{m}(\mathbb{C})$. Let
$\mathcal{N}=\mathcal{A}' \cap \mathcal{M}$.
Assume that $\mathcal{M}=
\int_{X} \bigoplus \mathcal{M}_{s}d\mu$ and $H=\int_{X} \bigoplus
H_{s} d\mu$ are the direct integral decompositions relative to the
center $\mathcal{Z}$ of $\mathcal{M}$. Assume that $\rho$ is a faithful
normal tracial state on $\mathcal{M}$ and $\{ \rho_{s}: s \in X \}$ is a family of positive, faithful, normal, tracial functionals
as introduced in  Lemma \ref{2.2} and Proposition \ref{2.5}. If $\mathcal M$ has Property $\Gamma$, then
 \begin{enumerate}\item []
 $\forall a_{1}, a_{2}, \dots, a_{k} \in \mathcal{M}$, $\forall n
\in \mathbb{N}$ and $\forall \epsilon >0$, there exist  a $\mu$-null subset
$X_{0}$ of $X$ and a family of mutually orthogonal equivalent projections $\{p_{1},
p_{2}, \dots, p_{n}\}$ in $\mathcal{N}$ with sum $I$ such that,
    $$\rho_{s}((p_{i}(s)a_{j}(s)-a_{j}(s)p_{i}(s))^{*}(p_{i}(s)a_{j}(s)-a_{j}(s)p_{i}(s))) < \epsilon,$$
     for all $i=1,2, \dots, n,$ $ j=1,2, \dots, k,$ and $ s \in X \setminus X_{0}$.\end{enumerate}
\end{lemma}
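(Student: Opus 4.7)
The plan is to adapt the analytic-selection machinery of the $(\text{iii})\Rightarrow(\text{i})$ direction of Proposition \ref{3.7} to the relative commutant $\mathcal{N}$. By Proposition \ref{3.9}, $\mathcal{N}$ has Property $\Gamma$. Since $\mathcal{M}\cong\mathcal{A}\otimes\mathcal{N}$ by Lemma 11.4.11 of \cite{KR1}, the center of $\mathcal{N}$ coincides with $\mathcal{Z}$, so the direct integral decomposition of $\mathcal{M}$ relative to $\mathcal{Z}$ restricts to $\mathcal{N}=\int_X^{\oplus}\mathcal{N}_s\,d\mu$ with $\mathcal{N}_s=\mathcal{A}_s'\cap\mathcal{M}_s$ almost everywhere. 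By Lemma \ref{3.8} we may assume $\mathcal{A}_s\cong M_m(\mathbb{C})$ for every $s\in X$, so each $\mathcal{N}_s$ is a type II$_1$ factor; Proposition \ref{3.7} applied to $\mathcal{N}$ then gives that $\mathcal{N}_s$ has Property $\Gamma$ for almost every $s$.

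Fix $a_1,\dots,a_k\in\mathcal{M}$, $n\in\mathbb{N}$ and $\epsilon>0$. Following the proof of Proposition \ref{3.7}, choose a separable Hilbert space $K$ and measurable unitaries $\{U_s:H_s\to K\}$ as in Remark \ref{2.4}; let $\mathcal{B}$ be the unit ball of $B(K)$ with the $*$-SOT, and $\mathcal{C}=\prod_{i=1}^{n}\mathcal{B}$ with the product topology, which is a complete separable metric space. Fix an SOT-dense sequence $\{a_\ell'\}_{\ell\in\mathbb{N}}$ of $(\mathcal{M}')_1$ satisfying the convention of Remark \ref{3.6}, together with finitely many generators $c_1,\dots,c_{m^2}$ of $\mathcal{A}$. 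Define $\eta\subseteq (X\setminus N_0)\times\mathcal{C}$ to be the set of all $(s,(Q_1,\dots,Q_n))$ for which (a) $Q_i=Q_i^{*}=Q_i^{2}$ for each $i$; (b) $Q_i$ commutes with each $U_sa_\ell'(s)U_s^{*}$ and with each $U_sc_t(s)U_s^{*}$, which forces $U_s^{*}Q_iU_s\in\mathcal{A}_s'\cap\mathcal{M}_s=\mathcal{N}_s$; (c) $\rho_s\bigl((a_j(s)U_s^{*}Q_iU_s-U_s^{*}Q_iU_sa_j(s))^{*}(a_j(s)U_s^{*}Q_iU_s-U_s^{*}Q_iU_sa_j(s))\bigr)<\epsilon$ for $1\le j\le k$; and (d) $\sum_{i=1}^{n}Q_i=I$ and $\rho_s(U_s^{*}Q_1U_s)=\cdots=\rho_s(U_s^{*}Q_nU_s)$. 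By Proposition \ref{2.5} and the measurability computations used to establish Claim 3.7.1 in the proof of Proposition \ref{3.7}, after enlarging $N_0$ by a $\mu$-null Borel set, $\eta$ is Borel, hence analytic.

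For almost every $s$, since $\mathcal{N}_s$ is a type II$_1$ factor with Property $\Gamma$, Corollary \ref{3.3.5} provides orthogonal equivalent projections in $\mathcal{N}_s$ summing to $I_s$ and $\epsilon$-commuting with $a_1(s),\dots,a_k(s)$ in $\rho_s$-$2$-norm; conjugating these by $U_s$ shows that the coordinate projection $\pi(\eta)$ equals $X\setminus N_0$ up to a null set. By Theorem 14.3.6 of \cite{KR1}, there is a measurable section $s\mapsto(Q_1^{(s)},\dots,Q_n^{(s)})$; extending by $0$ on $N_0$ and setting $p_i(s):=U_s^{*}Q_i^{(s)}U_s$ produces, exactly as in the final paragraphs of the proof of Proposition \ref{3.7}, decomposable operators $p_1,\dots,p_n\in\mathcal{M}$. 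Condition (b) places each $p_i$ in $\mathcal{N}$; conditions (a) and (d) make $\{p_i\}$ orthogonal projections summing to $I$ with common central trace $s\mapsto\tau_s(p_i(s))$, hence equivalent in $\mathcal{N}$; and (c) is the required fiberwise inequality.

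The main technical obstacle is encoding membership in $\mathcal{N}_s$ as countably many Borel constraints on $(s,Q_i)$: a projection $Q_i$ in $B(K)$ satisfies $U_s^{*}Q_iU_s\in\mathcal{N}_s$ iff $U_s^{*}Q_iU_s$ commutes with a generating set of $\mathcal{A}_s$ and with every element of $\mathcal{M}_s'$, and by Remark \ref{3.6} a single SOT-dense sequence in $(\mathcal{M}')_1$ detects commutation with $\mathcal{M}_s'$ almost everywhere. Once this encoding and the extension of Proposition \ref{2.5} to the finitely many extra functions $s\mapsto U_sc_t(s)U_s^{*}$ are in place, the remaining measurability verifications and null-set bookkeeping are parallel to those of Proposition \ref{3.7}.
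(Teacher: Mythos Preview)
Your proposal is correct and follows essentially the same analytic-selection strategy as the paper's own proof. The one technical difference is that the paper selects a full system of matrix units $\{E_{ij}\}_{i,j=1}^{n}$ (so $\mathcal{C}=\prod_{1\le i,j\le n}\mathcal{B}$), which encodes the equivalence of the diagonal projections directly via the off-diagonal partial isometries, whereas you select only the $n$ diagonal projections and recover equivalence in $\mathcal{N}$ from equality of the center-valued trace; both work, and your version needs slightly fewer Borel constraints at the cost of invoking comparison theory for projections in finite von Neumann algebras. One point you pass over a bit quickly (as does the paper) is that Property~$\Gamma$ of the factor $\mathcal{N}_s$ literally provides projections that approximately commute with prescribed elements of $\mathcal{N}_s$, while you need them to approximately commute with $a_j(s)\in\mathcal{M}_s$; this follows immediately from the decomposition $\mathcal{M}_s\cong M_m(\mathbb{C})\otimes\mathcal{N}_s$ by writing $a_j(s)=\sum_{g,h}a_{gh}(s)\otimes b_{gh}^{(j)}$ and applying Property~$\Gamma$ to the finitely many components $b_{gh}^{(j)}\in\mathcal{N}_s$.
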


\begin{proof}
Since $\mathcal{A} \cong M_{m}(\mathbb{C})$ and
$\mathcal{N}=\mathcal{A}' \cap \mathcal{M}$, by Lemma 11.4.11 in
\cite{KR1}, $\mathcal{M} \cong \mathcal{A} {\otimes} \mathcal{N}$.
Then by the discussion in section 11.2 in  \cite{KR1}, $\mathcal{N}$
is a type II$_{1}$ von Neumann algebra. Let $\{ a_{gh}
\}_{g,h=1}^{m}$ be a system of matrix units for $\mathcal{A}$. By
Lemma \ref{3.8}, we may assume that $\mathcal{A}_{s} \cong
M_{m}(\mathbb{C})$ and $\mathcal{M}_{s} \cong \mathcal{A}_{s}
{\otimes} \mathcal{N}_{s}$  for every $s \in X$. For each $s \in X$,
let $\{ a_{gh}(s)\}_{g,h=1}^{m}$ be a system of matrix units for
$\mathcal{A}_{s}$. By Proposition \ref{3.7}, we may assume that
$\mathcal{M}_{s}$ is a type II$_{1}$ factor with Property $\Gamma$
for every $s \in X$. Then by Lemma 5.1 in  \cite{EFAR2},
$\mathcal{N}_{s}$ is a type II$_{1}$ factor with Property $\Gamma$
for every $s \in X$.

Let $\{ a'_{r}: r \in \mathbb{N} \}$ be a $SOT$ dense subset in the
unit ball $\mathcal{M}'_{1}$ of $\mathcal{M}'$.  By Proposition
14.1.24 in \cite{KR1}, we may assume that
$(\mathcal{M}')_{s}=(\mathcal{M}_{s})'$ for every $s \in X$ and we
use the notation $\mathcal{M}'_{s}$ for both. Therefore by Remark
\ref{3.6}, we may assume $\{ a'_{r}(s) : r \in \mathbb{N} \}$ is
$SOT$ dense in $(\mathcal{M}_{s}')_{1}$ for every $s \in X$.

Take a separable Hilbert space $K$ and a family of unitaries $\{
U_{s}: H_{s} \to K; s \in X \}$ as in Remark \ref{2.4} such that $s
\to U_{s}x(s)$ and $s \to U_{s}a(s)U_{s}^{*}$ are measurable for
any $x \in H$ and any decomposable operator $a \in B(H)$.  Let
$\mathcal{B}$ be the unit ball of $B(K)$ equipped with the
$*$-strong operator topology. Since $K$ is separable,
$\mathcal{B}$ is metrizable by setting $d(S,T)=\sum_{j=1}^{\infty}
2^{-j}(\Vert (S-T)e_{j} \Vert +\Vert (S^{*}-T^{*})e_{j} \Vert
)$ for any $S, T \in \mathcal{B}$, where $\{ e_{j}: j \in \mathbb{N}
\}$ is an orthonormal basis for $K$. Then the metric space
$(\mathcal{B}, d)$ is complete and separable. For each $1 \leq i, j
\leq n$, let $\mathcal{B}_{ij}=\mathcal{B}$. Take
$\mathcal{C}=\prod\limits_{1 \leq i, j \leq n} \mathcal{B}_{ij}$
equipped with the product topology. It follows that $\mathcal{C}$ is a complete
separable metric space.

By the choices of $\{U_s\}$, we know that the maps $ s \to
U_{s}a'_{r}(s)U_{s}^{*},$ and $s \to U_{s}a_{gh}(s)U_{s}^{*} $ from
$X$ to $B(K)$ are measurable for any $r \in \mathbb{N}$ and any $ g,
h=1, 2, \dots, m$. By Lemma 14.3.1 in \cite{KR1}, there exists a
Borel $\mu$-null subset $N_{1}$ of $X$ such that the maps
\begin{align}
(s, b) &\to bU_{s}a'_{r}(s)U_{s}^{*}, \label{28} \\
(s, b) &\to U_{s}a'_{r}(s)U_{s}^{*}b, \label{29} \\
(s, b) &\to  bU_{s}a_{gh}(s)U_{s}^{*}, \label{30} \\
(s, b) &\to  U_{s}a_{gh}(s)U_{s}^{*}b \label{31}
\end{align}
are Borel measurable from $(X \setminus N_{1}) \times \mathcal{B}$ to $B(K)$ for any
$r \in \mathbb{N}$ and any $ g, h=1,  2, \dots, m$. Since $\rho$ is a faithful normal tracial state, by Lemma \ref{2.2},
we may assume that, for every $s\in X$, there exists a positive, faithful, normal,  tracial functional $\rho_{s}$ on  $\mathcal{M}_{s}$ such that $\rho (a)= \int_{X} \rho_{s}(a(s)) d\mu$ for any $a \in \mathcal{M}$. By Proposition \ref{2.5}, there is a Borel $\mu$-null subset $N_{2}$ of $X$ such that, for each $j \in \mathbb{N}$, the map
\begin{eqnarray}
(s, b) \to \rho_{s}((U_{s}^{*}bU_{s}a_{j}(s)-a_{j}(s)U_{s}^{*}bU_{s})^{*}(U_{s}^{*}bU_{s}a_{j}(s)-a_{j}(s)U_{s}^{*}bU_{s})) \label{32}
\end{eqnarray}
is Borel measurable from $( X\setminus N_{2}) \times \mathcal{B}$ to $\mathbb{C}$. From the fact that each $\rho_s$ is a positive, faithful, normal,  tracial functional  on   $\mathcal{M}_{s}$, it follows that $\rho_{s}$ is a positive scalar multiple of the unique trace $\tau_{s}$ on $\mathcal{M}_{s}$ for each $s$ in $X\setminus N_{2}$.

Let $N=N_{1} \cup N_{2}$.  {\em Let  $\eta$ be the collection of all these  elements $$(p, E_{11}, E_{12}, \dots, E_{nn}) \in (X\setminus N) \times \mathcal{C}$$ such that
\begin{enumerate}
\item [(i)]  for all $i_{1}, i_{2}, i_{3} \in \{ 1,2, \dots, n \}$,
\begin{equation} E_{i_{1}i_{2}}=E_{i_{2}i_{1}}^{*}\quad \text{and} \quad E_{i_{1}i_{2}}E_{i_{2}i_{3}}=E_{i_{1}i_{3}};\label{equa 3.10.2}\end{equation}

\item [(ii)]
\begin{equation}  E_{11}+E_{22}+\dots +E_{nn}=I;\label{equa 3.10.3}\end{equation}

\item [(iii)]  for all $i_{1}, i_{2} \in \{ 1,2, \dots, n \}$ and $r \in \mathbb{N},$
\begin{equation} E_{i_{1}i_{2}}U_{s}a'_{r}(s)U_{s}^{*}=U_{s}a'_{r}(s)U_{s}^{*}E_{i_{1}i_{2}} ;\label{equa 3.10.4}\end{equation}

\item [(iv)]  for all $i_{1}, i_{2} \in \{ 1,2, \dots, n \}$ and $g, h \in \{ 1,2, \dots, m \}$,
\begin{equation} E_{i_{1}i_{2}}U_{s}a_{gh}(s)U_{s}^{*}=U_{s}a_{gh}(s)U_{s}^{*}E_{i_{1}i_{2}}; \label{equa 3.10.5}\end{equation}

\item [(v)]  for all $i$ in $\{1,2, \dots, n\}$ and $j$ in $\{1,2, \dots, k\}$,
    \begin{equation}  \rho_{s}((U_{s}^{*}E_{ii}U_{s}a_{j}(s)-a_{j}(s)U_{s}^{*}E_{ii}U_{s})^{*}(U_{s}^{*}E_{ii}U_{s}a_{j}(s)-a_{j}(s)U_{s}^{*}E_{ii}U_{s})) < \epsilon. ;\label{equa 3.10.6}\end{equation}
\end{enumerate}}
We have the following claim.

\vspace{0.2cm}

{\bf Claim \ref{3.10}.1.} {\em The set $\eta$ is analytic.}

\vspace{0.2cm}

{\noindent Proof of Claim \ref{3.10}.1:} The maps
$$\begin{aligned}
 (E_{11}, E_{12}, \dots, E_{nn}) &\to E_{i_{1}i_{2}},\\
 (E_{11}, E_{12}, \dots, E_{nn}) &\to E_{i_{2}i_{1}}^{*},\\
 (E_{11}, E_{12}, \dots, E_{nn}) &\to E_{i_{1}i_{2}}E_{i_{2}i_{3}},\\
 (E_{11}, E_{12}, \dots, E_{nn}) &\to E_{11}+E_{22}+ \dots + E_{nn}
\end{aligned}
$$
are continuous from $\mathcal{C}$ (with the product topology) to $\mathcal{B}$ (with the $*$-strong operator topology). Therefore, we obtain that the maps
$$\begin{aligned}
 (s,E_{11}, E_{12}, \dots, E_{nn}) &\to E_{i_{1}i_{2}},\\
 (s,E_{11}, E_{12}, \dots, E_{nn}) &\to E_{i_{2}i_{1}}^{*},\\
 (s,E_{11}, E_{12}, \dots, E_{nn}) &\to E_{i_{1}i_{2}}E_{i_{2}i_{3}},\\
 (s,E_{11}, E_{12}, \dots, E_{nn}) &\to E_{11}+E_{22}+ \dots + E_{nn}
\end{aligned}
$$
are Borel measurable from $(X \setminus N) \times \mathcal{C}$ to $\mathcal{B}$ for all $1 \leq i_{1},i_{2},i_{3} \leq n$. From the fact that the maps (\ref{28}), (\ref{29}), (\ref{30}) and (\ref{31}) are Borel measurable from $(X \setminus N_{1}) \times \mathcal{B}$ to $B(K)$ and the map (\ref{32}) is Borel measurable from $(X \setminus N_{2}) \times \mathcal{B}$ to $\mathbb{C}$, it follows that the following maps
$$\begin{aligned}
(s, E_{11}, E_{12}, \dots, E_{nn}) &\to E_{i_{1}i_{2}}U_{s}a'_{r}(s)U_{s}^{*},\\
 (s, E_{11}, E_{12}, \dots, E_{nn}) &\to U_{s}a'_{r}(s)U_{s}^{*}E_{i_{1}i_{2}},\\
 (s, E_{11}, E_{12}, \dots, E_{nn}) &\to E_{i_{1}i_{2}}U_{s}a_{gh}(s)U_{s}^{*},\\
 (s, E_{11}, E_{12}, \dots, E_{nn}) &\to U_{s}a_{gh}(s)U_{s}^{*}E_{i_{1}i_{2}},\\
 (s, E_{11}, E_{12}, \dots, E_{nn}) &\to \rho_{s}((U_{s}^{*}E_{ii}U_{s}a_{j}(s)-a_{j}(s)U_{s}^{*}E_{ii}U_{s})^{*}(U_{s}^{*}E_{ii}U_{s}a_{j}(s)-a_{j}(s)U_{s}^{*}E_{ii}U_{s}))
\end{aligned}
$$
are Borel measurable
 when restricted to $(X \setminus N) \times \mathcal{C}$ for all  $1 \leq i_{1}, i_{2}, i \leq n,  1 \leq g,h \leq m, r \in \mathbb{N},$ and $ 1 \leq j \leq k$.
 Therefore $\eta$ is a Borel set. Thus $\eta$ is analytic by Theorem 14.3.5 in \cite{KR1}.This completes the proof of Claim \ref{3.10}.1.

 \vspace{0.2cm}
{\bf Claim \ref{3.10}.2.} {\em  Let $\pi$ be the projection of $X \times \mathcal{C}$ onto $X$. Then $\pi(\eta)= X \setminus N$.}

 \vspace{0.2cm}

{\noindent  Proof of Claim \ref{3.10}.2:} Notice that an element $(s, E_{11}, E_{12}, \dots,
E_{nn})$ in $(X \setminus N) \times \mathcal{C}$ satisfies conditions (i) and (ii) if and only if $\{
E_{i_{1}i_{2}} \}_{i_{1},i_{2}=1}^{n}$ is a system of matrix units for a matrix algebra which is isomorphic to $\mathcal M_n(\Bbb C)$.
Condition (iii) is equivalent to that $U_{s}^{*
}E_{i_{1}i_{2}}U_{s} \in \mathcal{M}_{s}$. Condition (iv) is
equivalent to that $U_{s}^{*}E_{i_{1}i_{2}}U_{s} \in
\mathcal{A}'_{s}$.

By assumption, for each $s \in X$, $\mathcal{M}_{s}$ and
$\mathcal{N}_{s}$ are type II$_{1}$ factors with Property $\Gamma$
and $\mathcal{M}_{s} \cong \mathcal{A}_{s} {\otimes}
\mathcal{N}_{s}$. Thus $\mathcal{A}'_{s} \cap \mathcal{M}_{s} =
\mathcal{N}_{s}$. It follows from the argument in the preceding
paragraph that,   for each $s \in X $, there exists a system of
matrix units  $\{ U_{s}^{*}E_{11}U_{s}, U_{s}^{*}E_{12}U_{s}, \dots,
U_{s}^{*}E_{nn}U_{s} \}$ in $\mathcal{N}_{s}$ such that $(s, E_{11},
E_{12}, \dots, E_{nn})$ satisfies conditions (i), (ii), (iii), (iv)
and (v). Therefore the image of $\eta$ under $\pi$ is exactly $X
\setminus N$. This completes the proof of Claim \ref{3.10}.2.

 \vspace{0.2cm}

 \noindent (Continue the proof of Lemma \ref{3.10}) By Claim \ref{3.10}.1 and Claim \ref{3.10}.2, $\eta$ is analytic and $\pi(\eta)=X\setminus
N$. By the measure-selection principle (Theorem 14.3.6 in
\cite{KR1}), there is a measurable mapping
$$s \to (E_{11,s}, E_{12,s}, \dots, E_{nn,s})$$
from $X \setminus N$ to $\mathcal{C}$ such that,  for  $s \in X \setminus N$ almost everywhere,
 $(s, E_{11,s}, E_{12,s}, \dots, E_{nn,s})$ satisfies conditions (i), (ii), (iii), (iv) and (v) (see (\ref {equa 3.10.2}),
  (\ref  {equa 3.10.3}), (\ref  {equa 3.10.4}), (\ref  {equa 3.10.5}), and (\ref  {equa 3.10.6})).
  Defining $E_{i_{1}i_{2}, s}=0$ for $s \in N, 1 \leq i_{1}, i_{2} \leq n$,  we get {\em a measurable map \begin{equation}s
  \to (E_{11,s}, E_{12,s}, \dots, E_{nn,s})\label {equa 3.10.1} \end{equation} from $X$ to $\mathcal{C}$ such that,  for  $s \in X $ almost everywhere, $(s, E_{11,s}, E_{12,s}, \dots, E_{nn,s})$ satisfies conditions (i), (ii), (iii), (iv) and (v)  (see (\ref {equa 3.10.2}), (\ref  {equa 3.10.3}), (\ref  {equa 3.10.4}), (\ref  {equa 3.10.5}), and (\ref  {equa 3.10.6})).}

From (\ref{equa 3.10.1}), for any $1 \leq i_{1}, i_{2} \leq n$ and any two vectors $x, y \in H$, it follows
$$\langle U_{s}^{*}E_{i_{1}i_{2},s}U_{s}x(s), y(s)\rangle=\langle E_{i_{1}i_{2},s}U_{s}x(s), U_{s}y_{s} \rangle,$$
and the map $s \to \langle U_{s}^{*}E_{i_{1}i_{2},s}U_{s}x(s), y(s)\rangle$ are measurable. Since
$$\vert \langle U_{s}^{*}E_{i_{1}i_{2}, s}U_{s}x(s), y(s)\rangle \vert \leq \Vert x(s) \Vert \Vert y(s) \Vert,$$
we know $s \to \langle U_{s}E_{i_{1}i_{2}, s}U_{s}x(s), y(s) \rangle$ is
integrable. By Definition 14.1.1 in \cite{KR1}, it follows that
\begin{eqnarray}
U_{s}^{*}E_{i_{1}i_{2}, s}U_{s}x(s)=(p_{i_{1}i_{2}}x)(s) \label{33}
\end{eqnarray}
almost everywhere for some $p_{i_{1}i_{2}}x \in H$. From (\ref{33}), we have that
\begin{eqnarray}
p_{i_{1}i_{2}}(s)=U_{s}^{*}E_{i_{1}i_{2}, s}U_{s} \label{34}
\end{eqnarray}
for almost every $s \in X$ and thus $p_{i_{1}i_{2}} \in \mathcal{M}$. By condition (iv),
$$U_{s}^{*}E_{i_{1}i_{2}, s}U_{s} \in \mathcal{A}_{s}'.$$
Hence
\begin{eqnarray}
p_{i_{1}i_{2}} \in \mathcal{A}' \cap \mathcal{M} = \mathcal{N}. \label{35}
\end{eqnarray}

Since conditions (i) and (ii) together imply that $\{ E_{i_{1}i_{2}, s} \}_{i_{1},i_{2}=1}^{n}$ is a system of matrix units, by (\ref{34}), we obtain that $p_{11}(s), p_{22}(s), \dots, p_{nn}(s)$ are $n$ orthogonal equivalent projections in $\mathcal{M}_{s}$ with sum $I_{s}$ almost everywhere. Therefore (\ref{35}) implies that $p_{11}, p_{22}, \dots, p_{nn}$ are $n$ orthogonal equivalent projections in $\mathcal{N}$ with sum $I$. For each $i \in \{ 1, 2, \dots, n\}$, let $p_{i}=p_{ii}$. From condition (v), we conclude that $p_{1}, p_{2}, \dots, p_{n}$ is a family of mutually orthogonal equivalent projections in $\mathcal N$  with sum $I$ satisfying,
 $\forall i=1,2, \dots, n, \forall j=1,2, \dots, k,$
    $$\rho_{s}((p_{i}(s)a_{j}(s)-a_{j}(s)p_{i}(s))^{*}(p_{i}(s)a_{j}(s)-a_{j}(s)p_{i}(s))) < \epsilon  \quad  \text { for }  s \in X  \ \text{ almost everywhere}.$$
    This ends the proof of the lemma.
\end{proof}

A slight modification of the proof in Lemma \ref{3.10} gives us the next corollary.

\begin{corollary} \label{3.11}
Let $\mathcal{M}$ be a type II$_{1}$ von Neumann algebra acting on a
separable Hilbert space $H$.  Let $m \in \mathbb{N}$ and  $\mathcal{A}$ be a unital subalgebra of $\mathcal{M}$ such that $\mathcal{A} \cong
M_{m}(\mathbb{C})$. Let $\mathcal{N}=\mathcal{A}' \cap \mathcal{M}$. Assume that  $\mathcal{M}=
\int_{X} \bigoplus \mathcal{M}_{s}d\mu$ and $H=\int_{X} \bigoplus
H_{s} d\mu$ are the direct integral decompositions relative to the
center $\mathcal{Z}$ of $\mathcal{M}$. Assume that   $\mathcal{M}$ has Property
$\Gamma$. Then,  $\forall a_{1}, a_{2}, \dots,
a_{k} \in  \mathcal{M}$, $\forall n \in \mathbb{N}$ and $\forall \epsilon >0$, there
exist  a $\mu$-null subset $X_{0}$ of $X$ and a family of mutually orthogonal
equivalent projections $\{p_{1}, p_{2}, \dots, p_{n}\}$ in
$\mathcal{N}$ with sum $I$ such that
 $$\Vert p_{i}(s)a_{j}(s)-a_{j}(s)p_{i}(s)) \Vert_{2,s} < \epsilon , \qquad \forall i=1,2, \dots, n, \ \forall j=1,2, \dots, k \text  { and } s \in X \setminus X_{0},$$ where $\Vert \cdot \Vert_{2,s}$ is the $2$-norm induced by the unique trace $\tau_{s}$ on $\mathcal{M}_{s}$.
\end{corollary}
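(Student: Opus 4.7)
The plan is to run the proof of Lemma \ref{3.10} verbatim, with a single modification in condition (v) of the analytic set $\eta$: replace the bound $\rho_s(\cdot) < \epsilon$ with the pointwise condition
\[
\rho_s\bigl((U_s^*E_{ii}U_s\,a_j(s)-a_j(s)U_s^*E_{ii}U_s)^*(U_s^*E_{ii}U_s\,a_j(s)-a_j(s)U_s^*E_{ii}U_s)\bigr) < \epsilon^{2}\,c(s),
\]
where $c(s):=\rho_s(I_s)$. Since $\mathcal M_s$ is a type II$_1$ factor and $\rho_s$ is positive, faithful, normal and tracial on it, we have $\rho_s=c(s)\tau_s$ with $c(s)>0$; this new condition is exactly $\|U_s^*E_{ii}U_s\,a_j(s)-a_j(s)U_s^*E_{ii}U_s\|_{2,s}<\epsilon$, which is the estimate asked for in the corollary.

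For Borel measurability of the rescaled condition on $(X\setminus N)\times \mathcal C$, the left-hand side was already shown to be Borel measurable in the proof of Lemma \ref{3.10}, and for the right-hand side Proposition \ref{2.5} supplies the Borel measurability of $(s,b)\mapsto\rho_s(U_s^*bU_s)$ on $(X\setminus N)\times\mathcal B$; evaluating at $b=I_K$ gives that $s\mapsto c(s)$ is Borel. Hence $\eta$ is still Borel, and is analytic by Theorem 14.3.5 in \cite{KR1}.

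For the analog of Claim \ref{3.10}.2 (that $\pi(\eta)=X\setminus N$), we use that for almost every $s$, $\mathcal M_s$ is a type II$_1$ factor with Property $\Gamma$ (by Proposition \ref{3.7}) and $\mathcal M_s\cong \mathcal A_s\otimes\mathcal N_s$ with $\mathcal A_s\cong M_m(\mathbb C)$. By Lemma 5.1 in \cite{EFAR2} there exist $n$ mutually orthogonal equivalent projections in $\mathcal N_s$ with sum $I_s$ whose commutators with $a_1(s),\ldots,a_k(s)$ have $\|\cdot\|_{2,s}$-norm smaller than $\epsilon$; transporting these to matrix units $\{E_{i_1 i_2}\}$ in $B(K)$ via the unitary $U_s$ produces an element of $\eta$ above the point $s$.

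The rest of the proof is identical to the corresponding piece of Lemma \ref{3.10}: apply the measure-selection principle (Theorem 14.3.6 in \cite{KR1}) to obtain a measurable assignment $s\mapsto (E_{11,s},\dots,E_{nn,s})\in\mathcal C$ satisfying the conditions almost everywhere, define $p_{i_1 i_2}(s):=U_s^*E_{i_1 i_2,s}U_s$, verify that the resulting $p_{i_1 i_2}$ are decomposable operators lying in $\mathcal A'\cap\mathcal M=\mathcal N$, and set $p_i:=p_{ii}$. The only real obstacle is keeping careful track that the rescaled bound $\epsilon^{2}c(s)$ does not spoil Borel measurability, and this is settled by Proposition \ref{2.5} as noted above.
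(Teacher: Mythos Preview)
Your proposal is correct and is precisely the ``slight modification'' the paper alludes to: rerun the proof of Lemma~\ref{3.10} with condition (v) rescaled by $c(s)=\rho_s(I_s)$ so that the $\rho_s$-bound becomes a $\tau_s$-bound, and note via Proposition~\ref{2.5} (evaluated at $b=I_K$) that $s\mapsto c(s)$ is Borel so the set $\eta$ remains Borel and analytic. Everything else---the measurable selection, the construction of the $p_i$'s in $\mathcal N$, and the verification that $\pi(\eta)=X\setminus N$---is unchanged from Lemma~\ref{3.10}.
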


In \cite{P}, Popa proved that if $\mathcal{A}$ is a type II$_{1}$
factor with separable predual, then there is a hyperfinite subfactor
$\mathcal{B}$ of $\mathcal{A}$ such that $\mathcal{B}' \cap
\mathcal{A} = \mathbb{C} I$. The following lemma  is essentially
Theorem 8 in \cite{AR4}.  The proof presented here is based on the
direct integral theory for von Neumann algebras and is different
from the one in \cite{AR4}.

\begin{lemma} (\cite{AR4})\label{3.12}
If $\mathcal{M}$ is a type II$_{1}$ von Neumann algebra acting on a
separable Hilbert space $H$, then there is a hyperfinite type
II$_{1}$ subfactor $\mathcal{R}$ of $\mathcal{M}$ such that
$\mathcal{R}' \cap \mathcal{M}=\mathcal{Z}$, where $\mathcal Z$ is the center of
$\mathcal{M}$.
\end{lemma}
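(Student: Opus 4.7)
The plan is to imitate, in a measurable fashion, Popa's construction of a hyperfinite subfactor with trivial relative commutant inside a type II$_1$ factor, using the central direct integral of $\mathcal{M}$. By Lemma \ref{2.1}, write $\mathcal{M}=\int_{X}^{\oplus}\mathcal{M}_s\,d\mu$ relative to the center $\mathcal{Z}$, so that $\mathcal{M}_s$ is a type II$_1$ factor with separable predual for almost every $s$; Popa's theorem then supplies each such $\mathcal{M}_s$ with a hyperfinite subfactor whose relative commutant in $\mathcal{M}_s$ is $\mathbb{C}I_s$. I want to produce such subfactors coherently in $s$ so that the global object $\mathcal{R}$ is a subalgebra of $\mathcal{M}$ which, when disintegrated, gives these fiber subfactors; the fiber condition will then force $\mathcal{R}'\cap\mathcal{M}$ to have fibers $\mathbb{C}I_s$ almost everywhere, i.e.\ to be the algebra of diagonalisable operators, which is $\mathcal{Z}$.

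To set up the induction, fix a faithful normal tracial state $\rho$ on $\mathcal{M}$ with fiber disintegration $\{\rho_s\}$ from Lemma \ref{2.2}, a sequence $\{a_j\}$ SOT-dense in the unit ball of $\mathcal{M}$ with $\{a_j(s)\}$ SOT-dense in $(\mathcal{M}_s)_1$ almost everywhere (Remark \ref{3.6}), and the unitaries $U_s:H_s\to K$ from Remark \ref{2.4}. I would inductively construct unital subalgebras $\mathcal{A}_0\subset\mathcal{A}_1\subset\cdots\subset\mathcal{M}$ with $\mathcal{A}_n\cong M_{2^n}(\mathbb{C})$ such that, outside a null set,
$$
\bigl\|E_{\mathcal{A}_n(s)'\cap\mathcal{M}_s}(a_j(s))-\tau_s(a_j(s))I_s\bigr\|_{2,s}<2^{-n},\qquad 1\le j\le n,
$$
where $\tau_s$ is the unique tracial state on $\mathcal{M}_s$ and $E_{\mathcal{A}_n(s)'\cap\mathcal{M}_s}$ is the trace-preserving conditional expectation. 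The inductive step asks, given $\mathcal{A}_n$, for a copy of $M_2(\mathbb{C})$ inside $\mathcal{A}_n'\cap\mathcal{M}$ whose fiberwise relative commutant halves the corresponding $2$-norm deviations; Popa's original argument delivers such a copy pointwise in each $\mathcal{M}_s$. Lifting this to a global choice is done by an analytic selection argument exactly parallel to Lemma \ref{3.10}: parameterise candidate $2\times 2$ matrix units as tuples $(E_{11},E_{12},E_{21},E_{22})$ in a complete separable metric space $\mathcal{C}$ built from the unit ball of $B(K)$; carve out the analytic set $\eta\subset(X\setminus N)\times\mathcal{C}$ of tuples satisfying the matrix-unit relations, commutation with $\mathcal{A}_n$ and with an SOT-dense sequence of $\mathcal{M}'$, and the fiberwise contraction estimate for $a_1,\ldots,a_{n+1}$; check that $\eta$ projects onto a co-null subset of $X$ using the pointwise Popa witness; and apply the measure selection theorem (Theorem 14.3.6 of \cite{KR1}) to obtain a measurable family of matrix units, which then defines $\mathcal{A}_{n+1}\subset\mathcal{M}$.

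Set $\mathcal{R}=\bigl(\bigcup_n\mathcal{A}_n\bigr)''$. Since all inclusions $\mathcal{A}_n\subset\mathcal{A}_{n+1}\subset\mathcal{M}$ are automatically trace-preserving, $\mathcal{R}$ is the weak closure of a trace-preserving UHF inductive limit and is therefore isomorphic to the hyperfinite type II$_1$ factor, in particular a factor. Fiberwise, $\mathcal{R}(s)=\bigl(\bigcup_n\mathcal{A}_n(s)\bigr)''$ is a hyperfinite II$_1$ factor in $\mathcal{M}_s$, and the $2^{-n}$ approximation forces $E_{\mathcal{R}(s)'\cap\mathcal{M}_s}(a_j(s))=\tau_s(a_j(s))I_s$ for every $j$; SOT-density of $\{a_j(s)\}$ in $\mathcal{M}_s$ then yields $\mathcal{R}(s)'\cap\mathcal{M}_s=\mathbb{C}I_s$. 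Consequently $\mathcal{R}'\cap\mathcal{M}$ is decomposable with fibers $\mathbb{C}I_s$ almost everywhere, i.e.\ $\mathcal{R}'\cap\mathcal{M}=\mathcal{Z}$.

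The main obstacle will be the measurable selection step. Producing a single-fiber $M_2(\mathbb{C})$ that halves $2$-norm deviations of finitely many elements is classical Popa, but expressing ``admissible tuple'' as a Borel condition requires, as in Lemma \ref{3.10}, that the maps $(s,b)\mapsto U_s^*bU_s$ and the quantities extracted from the disintegrated trace $\rho_s$ be jointly Borel outside a Borel null set; one must also control the accumulation of countably many exceptional null sets across the induction. Once these ingredients are in place, the inductive construction and the conclusion are routine.
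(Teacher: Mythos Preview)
Your approach is correct in outline and will work, but it differs from the paper's proof in an interesting way. The paper avoids the inductive tower entirely by exploiting the fact that the hyperfinite II$_1$ factor is singly generated in a very concrete sense: it is the weak closure of an irrational rotation algebra. So instead of selecting matrix units level by level, the paper performs a \emph{single} measurable selection of a pair of unitaries $(W_s,V_s)$ in each fiber satisfying $W_sV_s=e^{2\pi i\theta}V_sW_s$, together with a density condition ensuring that $\{W_s,V_s\}\cup\mathcal{M}_s'$ generates $B(H_s)$ (equivalently, that the von Neumann algebra they generate has trivial relative commutant in $\mathcal{M}_s$). Popa's theorem is invoked only as a black box to guarantee that the analytic set of admissible pairs projects onto all of $X\setminus N$; the global $\mathcal{R}$ is then simply the von Neumann algebra generated by the two resulting decomposable unitaries $\bar W,\bar V$.

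Your route---an increasing chain $\mathcal{A}_n\cong M_{2^n}(\mathbb{C})$ with a fiberwise conditional-expectation estimate at each stage---mirrors Popa's original construction more closely and is perfectly viable: the conditional expectation onto $(\mathcal{A}_{n+1})'\cap\mathcal{M}$ is a polynomial in the matrix units, so the relevant inequality is indeed a Borel condition, and the countably many null sets accumulate harmlessly. The trade-off is that you need countably many invocations of the selection theorem and must verify Borel measurability of the averaging estimate at each step, whereas the paper's irrational-rotation encoding collapses everything to one selection and a purely algebraic density criterion (condition~(iv)). Both arguments ultimately conclude by checking that the fiberwise relative commutant is scalar, hence $\mathcal{R}'\cap\mathcal{M}=\mathcal{Z}$.
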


\begin{proof}
By Lemma \ref{2.1}, $\mathcal{M}$ can be decomposed (relative to its
center) as a direct integral $\int_X \bigoplus \mathcal{M}_{s} d
\mu$ over a locally compact complete separable metric measure space
$(X,\mu)$ and $\mathcal{M}_{s}$ is a type II$_{1}$ factor almost
everywhere. In the following we assume that $\mathcal{M}_{s}$ is a
type II$_{1}$ factor for every $s \in X$.

By Remark \ref{2.4}, we can obtain a separable Hilbert space $K$ and
a family of unitaries $\{ U_{s}:H_{s} \to K; s \in X \}$ such that
the maps $s \to U_{s}x(s)$ and $s \to U_{s}a(s)U_{s}^{*}$ are
measurable for any $x \in H$ and any decomposable $a \in B(H)$. Let
$\mathcal{B}$ be the unit ball of $B(K)$ with the $*$-strong
operator topology. We observe that $\mathcal{B}$ is metrizable by
setting $d(S,T)=\sum_{j=1}^{\infty} 2^{-j}(\Vert (S-T)e_{j} \Vert
+\Vert (S^{*}-T^{*})e_{j} \Vert)$ for any $S, T \in
\mathcal{B}$, where $\{ e_{j}: j \in \mathbb{N}\}$ is an orthonormal
basis of $K$. Moreover, $(\mathcal{B},d)$ is a complete  separable
metric space. Let $\mathcal{C}=\mathcal{B} \times \mathcal{B}$ equipped with
the product topology. It follows that $\mathcal{C}$ is a complete separable
metric space.

Let $\{ a_{j}': j \in \mathbb{N} \}$ be a $SOT$ dense subset of the
unit ball $(\mathcal{M}')_{1}$. By Lemma 14.1.24, we may assume that
$(\mathcal{M}')_{s}=(\mathcal{M}_{s})'$ for every $s \in X$ and we
use the notation $\mathcal{M}'_{s}$ for both. By Remark \ref{3.6},
we may assume further that $\{ a'_{j}(s) : j \in \mathbb{N} \}$ is
$SOT$ dense in $(\mathcal{M}'_{s})_{1}$ for every $s \in X$. Let $\{
y_{j}: j \in \mathbb{N} \}$ be a countable dense subset in $H$. By
Lemma 14.1.3 in \cite{KR1}, the Hilbert space generated by $\{
y_{j}(s): j \in \mathbb{N} \}$ is $H_{s}$ for almost every $s \in
X$. Replacing $\{ y_{j}: j \in \mathbb{N} \}$ by the set of all
finite rational-linear combinations of vectors in $\{ y_{j}: j \in
\mathbb{N} \}$ if necessary, in the following we assume that $\{
y_{j}(s): j \in \mathbb{N} \}$ is dense in $H_{s}$ for every $s \in
X$.

Fix an irrational number $\theta \in (0,1)$.
We denote by $(s,W,V)$  an element in $ X\times \mathcal{B} \times \mathcal{B} =  X \times \mathcal{C}$.

The maps $W \to WW^{*}$, $W \to W^{*}W$, $V \to VV^{*}$, $V \to V^{*}V$ are $*$-$SOT$ continuous from $\mathcal{B}$ to $\mathcal{B}$. The maps $(W, V) \to WV, (W, V) \to e^{2\pi i \theta} VW$ are continuous from $\mathcal{C}$ with the product topology to $\mathcal{B}$ with the $*$-strong operator topology. Therefore the maps
\begin{align}
(s, W, V) &\to WW^{*}, \label{36} \\
(s, W, V) &\to W^{*}W, \label{37} \\
(s, W, V) &\to VV^{*}, \label{38} \\
(s, W, V) &\to V^{*}V, \label{39} \\
(s, W, V) &\to WV, \label{40} \\
(s, W, V) &\to e^{2\pi i \theta} VW \label{41}
\end{align}
are Borel measurable from $X \times \mathcal{C}$ to $\mathcal{B}$.
By Remark \ref{2.4}, the maps
$$s \to U_{s}a'_{j}(s)U_{s}^{*}$$
from $X$ to $B(K)$ and
$$s \to U_{s}y_{j}(s)$$
from $X$ to $K$ are all measurable for each $j \in \mathbb{N}$.

Let $$ \Bbb Q\langle X, Y, Z_1, Z_2,\ldots \rangle $$ be the collection of all $*$-polynomials   in intermediate variables $X, Y, Z_1, Z_2, \ldots$ with rational coefficients.
It is a countable set. By Lemma 14.3.1 in \cite{KR1}, there exists a Borel
$\mu$-null subset $N$ of $X$ such that,   $\forall j_{1}, j_{2} \in \mathbb{N}$, $\forall f\in \Bbb Q\langle X, Y, Z_1, Z_2,\ldots \rangle $, the maps
\begin{align}
(s, W ,V) &\to WU_{s}a'_{j}(s)U_{s}^{*}, \label{42} \\
(s, W, V) &\to U_{s}a'_{j}(s)U_{s}^{*}W, \label{43} \\
(s, W ,V) &\to VU_{s}a'_{j}(s)U_{s}^{*}, \label{44} \\
(s, W, V) &\to U_{s}a'_{j}(s)U_{s}^{*}V \label{45}
\end{align}
are Borel measurable from $(X \setminus N) \times \mathcal{C}$ to $\mathcal{B}$ and the map
\begin{eqnarray}
(s, W, V) \to \Vert f(W,V, \{ U_{s}a'_{j}(s)U_{s}^{*}: j \in \mathbb{N} \} )U_{s}y_{j_{1}}(s)-U_{s}y_{j_{2}(s)} \Vert \label{46}
\end{eqnarray}
is Borel meaurable from $(X \setminus N) \times \mathcal{C}$ to $\mathbb{C}$.

Now we introduce the  set $\eta$ as follows.

\vspace{0.2cm}

 {\em
Let $\eta$ be the collection of all these elements $ (s, W, V) \in (X \setminus N) \times \mathcal{C}$  satisfying
\begin{enumerate}
\item [(i)] $WW^{*}=W^{*}W=VV^{*}=V^{*}V=I$, where $I$ is the identity in $B(K)$;

\item [(ii)] $WU_{s}a'_{j}(s)U_{s}^{*}=U_{s}a'_{j}(s)U_{s}^{*}W$ and $VU_{s}a'_{j}(s)U_{s}^{*}=U_{s}a'_{j}(s)U_{s}^{*}V$ for every $j \in \mathbb{N}$;

\item [(iii)] $WV=e^{2\pi i \theta} VW$;

\item [(iv)] for all $N, j_{1}, j_{2} \in \mathbb{N}$, there exists an $f$  in $\Bbb Q\langle X, Y, Z_1, Z_2,\ldots \rangle $ such that

$$\Vert f(W,V, \{ U_{s}a'_{j}(s)U_{s}^{*}: j \in \mathbb{N} \} )U_{s}y_{j_{1}}(s)-U_{s}y_{j_{2}}(s) \Vert < 1/N.$$
\end{enumerate}}
\vspace{0.2cm}

{\bf Claim \ref{3.12}.1.} {\em The set $\eta$ is analytic.}

\vspace{0.2cm}

{\noindent  Proof of Claim \ref{3.12}.1:} Since the maps (\ref{36})-(\ref{46}) are all
Borel measurable when restricted to $(X\setminus N) \times
\mathcal{C}$, $\eta$ is a Borel set. By Theorem 14.3.5 in
\cite{KR1}, $\eta$ is analytic. This completes the proof of Claim \ref{3.12}.1.

\vspace{0.2cm}
{\bf Claim \ref{3.12}.2.} {\em Let $\pi$ be the projection of $X \times \mathcal{C}$ onto $X$. Then $\pi(\eta) = X \setminus N$.}

\vspace{0.2cm}

{\noindent  Proof of Claim \ref{3.12}.2:} We observe that an element $(s, W, V)$ satisfies conditions (i), (ii) and (iii) if and only if $U_{s}^{*}WU_{s}$ and $U_{s}^{*}VU_{s}$ are two unitaries in $\mathcal{M}_{s}$ such that
$(U_{s}^{*}WU_{s})(U_{s}^{*}VU_{s})=e^{2 \pi i \theta}(U_{s}^{*}VU_{s})(U_{s}^{*}WU_{s})$. Since $\{ y_{j}(s): j \in \mathbb{N} \}$ is dense in $H_{s}$ for every $s \in X$, condition (iv) is equivalent to the condition that the von Neumann algebra generated by $\{U_{s}^{*}WU_{s}, U_{s}^{*}VU_{s} \} \cup \{ a'_{j}(s): j \in \mathbb{N} \}$ is $B(H_{s})$.

For each $s \in X$, $\mathcal{M}_{s}$  is a type II$_{1}$ factor
with separable predual. By Popa's result in \cite{P}, there exists a
type II$_{1}$ hyperfinite subfactor $\mathcal{R}^{(s)}$ of
$\mathcal{M}_{s}$ such that $(\mathcal{R}^{(s)})' \cap
\mathcal{M}_{s}=\mathbb{C} I_{s}$. Notice a hyperfinite II$_1$ factor always contains an irrational rotation C$^*$-algebra as a SOT dense subalgebra. Combining with the argument in the prededing paragraph, we know that there
exist two unitaries $U_{s}^{*}WU_{s}$ and $U_{s}^{*}VU_{s}$ in
$\mathcal{R}^{(s)}$ (where $W, V$ are unitaries in $\mathcal{B}$)
such that they generate $\mathcal{R}^{(s)}$ as a von Neumann algebra
and  $(s,W,V)$ satisfies conditions (i), (ii) and (iii). The condition
$(\mathcal{R}^{(s)})' \cap \mathcal{M}_{s}=\mathbb{C} I_{s}$ is
equivalent to the condition that the von Neumann algebra generated
by $\mathcal{R}^{(s)} \cup \mathcal{M}'_{s}$ is $B(H_{s})$. Since
$U_{s}^{*}WU_{s}$ and $U_{s}^{*}VU_{s}$ generate
$\mathcal{R}^{(s)}$ as a von Neumann algebra and $\{ a'_{j}(s): j
\in \mathbb{N} \}$ is $SOT$ dense in the unit ball of
$\mathcal{M}'_{s}$, the von Neumann algebra
$W^{*}(U_{s}^{*}WU_{s}, U_{s}^{*}VU_{s}, \{ a'_{j}(s): j
\in \mathbb{N} \})$ generated by $U_{s}^{*}WU_{s},
U_{s}^{*}VU_{s}$ and $\{ a'_{j}(s): j \in \mathbb{N} \}$ is
$B(H_{s})$. Hence,  from the argument in the preceding paragraph, it follows that  $(s,W,V)$ satisfies  satisfy condition (iv). Therefore  the
image of $\eta$ under $\pi$ is $X \setminus N$. This completes the proof of  Claim \ref{3.12}.2.

\vspace{0.2cm}

\noindent(Continue the proof of Lemma \ref{3.12}) By Claim
\ref{3.12}.1 and Claim \ref{3.12}.2, we know that $\eta$ is analytic
and $\pi(\eta)=X \setminus N$. By the measure-selection principle
(Theorem 14.3.6 in \cite{KR1}), there is a measurable map $s \to
(W_{s}, V_{s})$ from $X \setminus N$ to $\mathcal{C}$ such that $(s,
W_{s}, V_{s})$ satisfies condition (i), (ii), (iii) and (iv) for
$s \in X \setminus N$ almost everywhere. Defining $W_{s}=V_{s}=0$
for any $s \in N$, we get {\em a measurable map \begin{equation} s
\to (W_{s}, V_{s})\label {3.12.1}\end{equation} from $X$ to
$\mathcal{C}$  such that $(s, W_{s}, V_{s})$ satisfies condition
(i), (ii), (iii) and (iv) for   $s \in X $ almost everywhere.}

 For any two vectors $x, y \in H$, we have
\begin{equation} \langle U_{s}^{*}W_{s}U_{s}x(s), y(s) \rangle=\langle W_{s}U_{s}x(s), U_{s}y(s) \rangle.\label {3.12.2}\end{equation}
Combining (\ref{3.12.2}) with  (\ref{3.12.1}), we know the map $$s \to \langle U_{s}^{*}W_{s}U_{s}x(s), y(s)  \rangle$$ from $X$ to $\mathbb{C}$
 is measurable. Since
$$\vert \langle U_{s}^{*}W_{s}U_{s}x(s), y(s)  \rangle \vert \leq \Vert x(s) \Vert \Vert y(s) \Vert,$$
we obtain that $$s \to \langle U_{s}^{*}W_{s}U_{s}x(s), y(s)  \rangle$$ is
integrable. By Definition 14.1.1 in \cite{KR1}, it follows that
$$U_{s}^{*}W_{s}U_{s}x(s)=(\bar{W} x)(s)$$
almost everywhere for some $\bar{W} x \in H$. Therefore
\begin{eqnarray}
\bar{W}(s)=U_{s}^{*}W_{s}U_{s} \label{47}
\end{eqnarray}
for almost every $s \in X$. Since conditions (i) and (ii) imply that
$U_{s}^{*}W_{s}U_{s}$ is a unitary in $\mathcal M_s$, we obtain from
equation (\ref{47}) that $\bar{W}$ is a unitary in $\mathcal{M}$.
Similarly we can find another unitary $\bar{V}$ in $\mathcal{M}$
such that
\begin{eqnarray}\bar{V} (s)=U_{s}^{*}V_{s}U_{s} \label{47.2}
\end{eqnarray} for almost every $s \in X$ and
thus, from condition (iii),
$$\bar{W}(s) \bar{V}(s)=e^{2 \pi i \theta} \bar{V}(s) \bar{W}(s)$$
for almost every $s \in X$. Therefore \begin{eqnarray}\bar{W}
\bar{V}=e^{2 \pi i \theta} \bar{V} \bar{W}.\label{47.1}
\end{eqnarray}

Let $\mathcal{R}^{(s)}$ be the von Neumann subalgebra generated by
$U_{s}^{*}W_{s}U_{s}$ and $U_{s}^{*}V_{s}U_{s} $ in $\mathcal M_s$.
From condition (iv), we know that $(\mathcal{R}^{(s)})'\cap M_s=\Bbb
CI_s$ for $s\in X$ almost everywhere.

Let $\mathcal{R}$ be a von Neumann subalgebra of $\mathcal{M}$
generated by   two unitaries $\bar{W}, \bar{V}$. From (\ref{47}),
(\ref{47.2}) and (\ref{47.1}), it follows that $\mathcal{R}$ is a
hyperfinite type II$_{1}$ factor and
$\mathcal{R}_{s}=\mathcal{R}^{(s)}$ for almost every $s \in X$.

To complete the proof, we just need to show that $\mathcal{R}' \cap \mathcal{M} = \mathcal{Z}$. Suppose $a \in \mathcal{R}' \cap \mathcal{M}$.  Then $a(s) \in \mathcal{R}'_{s} \cap \mathcal{M}_{s}$ for almost every $s \in X$. Since $(\mathcal{R}^{(s)})' \cap \mathcal{M}_{s}=\mathbb{C} I_{s}$ and $\mathcal{R}_{s}=\mathcal{R}^{(s)}$ for almost every $s \in X$, $a(s)=c_{s}I_{s}$ for almost every $s \in X$ and thus $a \in \mathcal{Z}$. Hence $\mathcal{R}' \cap \mathcal{M} = \mathcal{Z}$.
\end{proof}

 The following result is a   generalization of Theorem 5.4 in \cite{EFAR2} in the setting of von Neumman algebras.
 The proof follows the similar line as  the one used in  Theorem 5.4 in \cite{EFAR2}.

\begin{theorem} \label{3.13}
Let $\mathcal{M}$ be a type II$_{1}$ von Neumann algebra with
separable predual and $\mathcal{Z}$ be the center of $\mathcal{M}$.
Let $\rho$ be a faithful normal tracial state on $\mathcal{M}$ and
$\Vert \cdot \Vert_{2}$ be the $2$-norm on $\mathcal{M}$ induced by
$\rho$. If $\mathcal{M}$ has Property $\Gamma$, then there exists a
hyperfinite type II$_{1}$ subfactor $\mathcal{R}$ of $\mathcal{M}$
such that
\begin{enumerate}
\item [(I)] $\mathcal{R} \cap \mathcal{M}' = \mathcal{Z}$;
\item [(II)] for any $n \in \mathbb{N}$, any elements $a_{1}, a_{2},
..., a_{k}$ in $\mathcal{M}$, there exists a countable collection of
projections $\{ p_{1t}, p_{2t}, \dots, p_{nt} : t \in \mathbb{N} \}$
in $\mathcal{R}$ such that
\begin{enumerate}
\item [(i)] for each $t \in \mathbb{N}$, $p_{1t}, p_{2t}, \dots,
p_{nt}$ are $n$ orthogonal equivalent projections in $\mathcal{R}$
with sum I;

\item [(ii)] $\lim\limits_{t \to \infty} \Vert p_{it}a_{j}-a_{j}p_{it}
\Vert_{2} =0$ for any $i=1, 2,\dots , n;  j=1, 2, \dots, k$.
\end{enumerate}
\end{enumerate}
\end{theorem}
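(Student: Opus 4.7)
The plan is to follow the inductive construction from the proof of Theorem 5.4 in \cite{EFAR2}, with the factor-level ingredients replaced by the relative-commutant tools developed here. Fix a faithful normal tracial state $\rho$ on $\mathcal{M}$ with induced 2-norm $\|\cdot\|_2$, and a countable SOT-dense sequence $\{a_j\}_{j\in\mathbb{N}}$ in the unit ball of $\mathcal{M}$. By Lemma \ref{3.12}, fix a hyperfinite II$_1$ subfactor $\mathcal{R}_0\subset\mathcal{M}$ with $\mathcal{R}_0'\cap\mathcal{M}=\mathcal{Z}$, write $\mathcal{R}_0=\overline{\bigcup_m\mathcal{C}_m}^{\mathrm{SOT}}$ for an increasing tower of finite-dimensional matrix subfactors $\mathcal{C}_m$, and fix a countable SOT-dense sequence $\{c_l\}_{l\in\mathbb{N}}$ in the unit ball of $\mathcal{R}_0$.

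I would next build an increasing tower $\mathcal{A}_0\subset\mathcal{A}_1\subset\cdots$ of finite-dimensional matrix subfactors of $\mathcal{M}$ inductively. Setting $\mathcal{A}_0=\mathbb{C}I$ and having constructed $\mathcal{A}_{m-1}$, Proposition \ref{3.9} ensures that $\mathcal{N}_{m-1}:=\mathcal{A}_{m-1}'\cap\mathcal{M}$ has Property $\Gamma$. Applying Lemma \ref{3.10} with $\mathcal{A}=\mathcal{A}_{m-1}$, $n=m$, $\epsilon=1/m^2$, and element list $\{a_1,\ldots,a_m,c_1,\ldots,c_m\}$ produces $m$ orthogonal equivalent projections $p_1^{(m)},\ldots,p_m^{(m)}$ in $\mathcal{N}_{m-1}$ summing to $I$ with $\|p_i^{(m)}x-xp_i^{(m)}\|_2<1/m$ for every listed $x$. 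I would then extend $\{p_i^{(m)}\}$ to a full system of matrix units for a subfactor $\mathcal{E}_m\cong M_m(\mathbb{C})$ of $\mathcal{N}_{m-1}$ and define $\mathcal{A}_m:=\mathcal{A}_{m-1}\vee\mathcal{E}_m\cong\mathcal{A}_{m-1}\otimes\mathcal{E}_m$; finally set $\mathcal{R}:=\overline{\bigcup_m\mathcal{A}_m}^{\mathrm{SOT}}$, which is a hyperfinite II$_1$ subfactor of $\mathcal{M}$ as the SOT-closure of an increasing tower of matrix factors with compatible normalized traces.

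Condition (II) should then follow by a routine unwinding: for given $n\in\mathbb{N}$, $a_1,\ldots,a_k\in\mathcal{M}$ and $\epsilon>0$, 2-norm approximate each $a_l$ by an element of the dense sequence, and for each sufficiently large $m$ select from the matrix units of $\mathcal{E}_m$ a system of $n$ orthogonal equivalent projections summing to $I$; these 2-norm approximately commute with the approximants, hence with the $a_l$'s, and enumerating the valid $m$'s as $t\in\mathbb{N}$ yields the required sequence of projection systems.

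The hardest step will be (I), namely $\mathcal{R}'\cap\mathcal{M}\subset\mathcal{Z}$ (the reverse containment is automatic since $\mathcal{R}\subset\mathcal{M}$). The inductive construction above only forces $\mathcal{R}$ to asymptotically commute with every $c_l$, not to contain $c_l$, so the plan is to tighten the construction via a Popa-style absorption: at step $m$, after extracting $\{p_i^{(m)}\}$ from Lemma \ref{3.10}, I would perturb the extension to matrix units $\mathcal{E}_m$ so that for $l\le m$ the element $c_l$ is actually 2-norm close to the linear span of the matrix units of $\mathcal{A}_m$, rather than merely asymptotically commuting with them. Once this absorption is carried out, $c_l\in\mathcal{R}$ for every $l$, whence $\mathcal{R}\supset\mathcal{R}_0$ and therefore $\mathcal{R}'\cap\mathcal{M}\subset\mathcal{R}_0'\cap\mathcal{M}=\mathcal{Z}$. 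Making the perturbation rigorous while preserving the approximate commutation with the $a_j$'s is the delicate core of the argument and is exactly where the analogue of Popa's intertwining technique used in \cite{EFAR2} enters; the remainder is a straightforward assembly of Lemmas \ref{3.10} and \ref{3.12} and Proposition \ref{3.9}.
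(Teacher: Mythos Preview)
Your plan for condition (II) is essentially the right shape, though note that Lemma \ref{3.10} gives you a fiberwise bound $\rho_s(\,\cdot\,)<\epsilon$ rather than a global $\|\cdot\|_2$ bound; the paper handles this by carrying the fiberwise estimates through the induction and invoking the Dominated Convergence Theorem at the end. Also, extracting $n$ equivalent projections summing to $I$ from $\mathcal{E}_m\cong M_m(\mathbb{C})$ requires $n\mid m$, which is why the paper at stage $t$ applies Lemma \ref{3.10} once for each $l=2,\ldots,t$ rather than just once with $n=m$. These are fixable details.

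The real gap is condition (I). Your proposed ``Popa-style absorption'' --- perturbing the matrix units of $\mathcal{A}_m$ so that each $c_l$ eventually lies in $\mathcal{R}$ --- is not what \cite{EFAR2} does, and you have not indicated any mechanism for carrying it out. There is no evident reason why a tower of matrix algebras constrained by the approximate-commutation requirement of Lemma \ref{3.10} should be able to swallow a prescribed hyperfinite factor $\mathcal{R}_0$; the $c_l$'s are arbitrary elements of a II$_1$ factor sitting inside $\mathcal{M}$, and asking a small matrix algebra to approximate them in $\|\cdot\|_2$ while its diagonal projections simultaneously almost commute with the $a_j$'s is a genuine obstruction, not a routine perturbation.

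The paper (following \cite{EFAR2}) avoids absorption entirely and instead builds the trivial-relative-commutant property into each step via an averaging condition. After assembling the Property~$\Gamma$ projections into a matrix algebra $\mathcal{B}_t$, one applies Lemma \ref{3.12} not once globally but at every stage, to the relative commutant $\mathcal{N}=\mathcal{B}_t'\cap\mathcal{M}$, obtaining a hyperfinite $\mathcal{S}\subset\mathcal{N}$ with $(\mathcal{B}_t\otimes\mathcal{S})'\cap\mathcal{M}=\mathcal{Z}$. One then chooses a large enough finite-dimensional $\mathcal{F}_r\subset\mathcal{S}$ so that Haar-averaging over the unitary group of $\mathcal{A}_t:=\mathcal{B}_t\otimes\mathcal{F}_r$ approximates the center-valued trace $\tau$ on the first $t$ test elements against the first $t$ normal states. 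For $a\in\mathcal{R}'\cap\mathcal{M}$ these averages fix $a$, and the averaging condition forces $a=\tau(a)\in\mathcal{Z}$. This is the step you are missing; replacing your absorption sketch with this averaging argument makes the proof go through.
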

\begin{proof}
Since $\mathcal{M}$ has separable predual, by Proposition A.2.1 in
\cite{JS}, there is a faithful normal representation $\pi$ of
$\mathcal{M}$ on a separable Hilbert space. Replacing $\mathcal{M}$
by $\pi({\mathcal{M}})$  and $\rho$ by $\rho \circ \pi^{-1}$, we may
assume that $\mathcal{M}$ is acting on a separable Hilbert space
$H$.

By Lemma \ref{2.1}, there are direct integral decompositions
$\mathcal{M}=\int_{X} \bigoplus \mathcal{M}_{s} d \mu $ and
$H=\int_{X} H_{s} d \mu$ of $(\mathcal{M}, H)$ relative to
$\mathcal{Z}$ over $(X, \mu)$, where $\mathcal{M}_{s}$ is a type
II$_{1}$ factor for almost every $s \in X$. We assume that every
$\mathcal{M}_{s}$ is a type II$_{1}$ factor. Notice that $\rho$ is a faithful, normal, tracial state on $\mathcal M$. From Lemma \ref{2.2}, we might assume there is a positive, faithful, normal, tracial linear functional $ \rho_s $ on $\mathcal M_s$ for every $s\in X$  such that
$$
\rho(a)=\int_X \rho_s(a(s)) d\mu, \qquad \forall a\in \mathcal M.
$$

Let $\{\phi_{i}: i \in \mathbb{N} \}$ be a sequence of normal states on $\mathcal{M}$ that is norm dense in the set of all
  normal states on $\mathcal{M}$. Let $\{ b_{j}: j \in \mathbb{N} \}$ be a sequence of elements that is $SOT$ dense in the unit ball
  $(\mathcal{M})_{1}$ of $\mathcal{M}$. By Remark \ref{3.6}, we may assume that $\{b_{j}(s): j \in \mathbb{N} \}$ is $SOT$ dense in the
  unit ball $(\mathcal{M}_{s})_{1}$ of $\mathcal{M}_{s}$ for every $s \in X$.

   Let $\tau$ be the unique center-valued trace on $\mathcal{M}$ such that $\tau(a)=a$ for all $a\in \mathcal Z$ (see Theorem 8.2.8 in \cite{KR1}).

We will show that {\em there is an increasing sequence $\{
\mathcal{A}_{t}: t \in \mathbb{N} \}$  of full matricial algebras in
$\mathcal M$ satisfying, for all $t\in \Bbb N$,
\begin{enumerate}
\item [(a)] there exists a $\mu$-null subset $N_{t}$ of $X$ such that, for each $1 \leq l \leq t$, there exist $l$ equivalent orthogonal projections $p_{1}, p_{2}, \dots, p_{l}$ in $\mathcal{A}_{t}$ with sum $I$ satisfying

$$\rho_{s}((p_{i}(s)b_{j}(s)-b_{j}(s)p_{i}(s))^{*}(p_{i}(s)b_{j}(s)-b_{j}(s)p_{i}(s)))<1/t$$
 for any $i=1, 2, \dots, l;  j=1,2, \dots , t;  s \in X \setminus N_{t};$
\item [(b)] let $\mathcal{U}_{t}$ be the unitary group of $\mathcal{A}_{t}$ and $d \mu_{t}$ be the normalized Haar measure on $\mathcal{U}_{t}$. Then for any $i, j =1, 2, \dots, t$,

$$\vert \phi_{i}(\int_{\mathcal{U}_{t}} ub_{j}u^{*} d\mu_{t} -\tau(b_{j})) \vert < 1/t.$$\end{enumerate}}

First, we observe that conditions (a) and (b) are
satisfied by letting $\mathcal{A}_{1}=\mathbb{C}1$ and $p_{1}=I$.
Now suppose $\mathcal{A}_{t-1}$ has been constructed. Take
$\mathcal{N}_{1}=\mathcal{A}_{t-1}' \cap \mathcal{M}$. By Lemma
11.4.11 in \cite{KR1}, $\mathcal{M} \cong \mathcal{A}_{t-1}
{\otimes} \mathcal{N}_{1}$.

Next, in order to construct $\mathcal A_t$, we will apply Lemma
\ref{3.10} $t-1$ times. At the first time, applying Lemma \ref{3.10}
to $\mathcal A_{t-1}$ and the set $\{ b_{1}, b_{2}, \dots, b_{t} \}$, we
obtain two equivalent orthogonal projections $p_{1,1}, p_{2,1}$ in
$\mathcal{N}_{1}$ with sum $I$ and a $\mu$-null subset $N_{t,1}$ of
$X$ such that
\begin{eqnarray}
\rho_{s}((p_{i,1}(s)b_{j}(s)-b_{j}(s)p_{i,1}(s))^{*}(p_{i,1}(s)b_{j}(s)-b_{j}(s)p_{i,1}(s)))<1/t \label{48}
\end{eqnarray}
for any $i=1, 2; j=1, 2, \dots, t, s \in X \setminus N_{t,1}$. Note
that  $p_{1,1}, p_{2,1}$ are two equivalent orthogonal projections
 in $\mathcal{N}_{1}$ with sum $I$. There is a unital subalgebra
$\mathcal{B}_{t,1}$ of $\mathcal{N}_{1}$ such that
$\mathcal{B}_{t,1} \cong M_{2}(\mathbb{C})$ and $p_{1,1}, p_{2,1}
\in \mathcal{B}_{t,1}$. Take
\begin{eqnarray}
\mathcal{A}_{t,1}=\mathcal{A}_{t-1} {\otimes} \mathcal{B}_{t,1}.
\label{49}
\end{eqnarray}
Now suppose that $\mathcal{A}_{t,l-1}$ have been constructed for
some $2\le l \leq t-1$. By applying Lemma \ref{3.10} to
$\mathcal{A}_{t,l-1}$ and $\{ b_{1}, b_{2}, \dots, b_{t} \}$, we can
find $l+1$ equivalent  orthogonal projections $p_{1,l}, p_{2,l},
\dots, p_{l+1, l}$ in $\mathcal{A}_{t, l-1}' \cap \mathcal{M}$ with
sum $I$ and a $\mu$-null subset $N_{t, l}$ of $X$ such that
\begin{eqnarray}
\rho_{s}((p_{i,l}(s)b_{j}(s)-b_{j}(s)p_{i,l}(s))^{*}(p_{i,l}(s)b_{j}(s)-b_{j}(s)p_{i,l}(s)))<1/t \label{50}
\end{eqnarray}
for any $i=1, 2, \dots, l+1, j=1, 2, \dots, t,$ and $ s \in X
\setminus N_{t, l}$. Again there is a unital subalgebra
$\mathcal{B}_{t, l}$ of $\mathcal{A}_{t, l-1}' \cap \mathcal{M}$
such that $\mathcal{B}_{t, l} \cong M_{l+1}(\mathbb{C})$ and
$p_{1,l}, p_{2,l}, \dots, p_{l+1,l} \in \mathcal{B}_{t, l}$. Take
\begin{eqnarray}
\mathcal{A}_{t, l}=\mathcal{A}_{t, l-1} {\otimes} \mathcal{B}_{t,
l}. \label{51}
\end{eqnarray}
Now we let
\begin{eqnarray}
\mathcal{B}_{t}=\mathcal{A}_{t, t-1} \label{52}
\end{eqnarray}
and
\begin{eqnarray}
N_{t}=\cup_{l=1}^{t-1} N_{t, l}. \label{53}
\end{eqnarray}
Then $\mu(N_{t})=0$. By (\ref{48}), (\ref{49}), (\ref{50}), (\ref{51}), (\ref{52}) and (\ref{53}),  $\mathcal{B}_{t}$ contains   sets  of projections satisfying condition (a).

Let $\mathcal{N}=\mathcal{B}_{t}' \cap \mathcal{M}$. By Lemma
11.4.11 in \cite{KR1}, we know that $\mathcal{M} \cong
\mathcal{B}_{t} {\otimes} \mathcal{N}$. By the arguments in Section
11.2 in \cite{KR1}, $\mathcal{N}$ is a type II$_{1}$ von Neumann
algebra, and therefore, by Lemma \ref{3.12}, there is a hyperfinite
subfactor $\mathcal{S}$ of $\mathcal{N}$ such that $\mathcal{S}'
\cap \mathcal{N}=\mathcal{Z}_{\mathcal{N}}$, where
$\mathcal{Z}_{\mathcal{N}}$ is  the center of $\mathcal{N}$. Hence
$(\mathcal{B}_{t} {\otimes} \mathcal{S})' \cap
\mathcal{M}=\mathbb{C}I {\otimes}
\mathcal{Z}_{\mathcal{N}}=\mathcal{Z}$. Since $\mathcal{S}$ is a
hyperfinite type II$_{1}$ factor, there exists an increasing
sequence $\{ \mathcal{F}_{r}: r \in \mathbb{N} \}$ of matrix
subalgebras of $\mathcal{S}$ whose union is ultraweakly dense in
$\mathcal{S}$ and thus $\cup_{r \in \mathbb{N}} \mathcal{B}_{t}
{\otimes} \mathcal{F}_{r}$ is ultraweakly dense in $\mathcal{B}_{t}
{\otimes} \mathcal{S}$. Let $\mathcal{V}_{r}$ be the unitary group
of $\mathcal{B}_{t} {\otimes} \mathcal{F}_{r}$ with normalized Haar
measure $d \nu_{r}$. Since $(\mathcal{B}_{t} {\otimes} \mathcal{S})'
\cap \mathcal{M} = \mathcal{Z}$ and $\tau$ is a center-valued trace
on $\mathcal M$ such that $\tau(a)=a$ for all $a\in \mathcal Z$,
Lemma 5.4.4 in \cite{AR3} shows that $\tau(a)= \lim\limits_{r \to
\infty} \int_{\mathcal{V}_{r}} vav^{*} d \nu_{r}$ ultraweakly for
all $a \in \mathcal{M}$. Since each $\phi_{i}$ is normal, there
exists $r$ large enough such that
\begin{eqnarray}
\vert \phi_{i}(\int_{\mathcal{V}_{r}}vb_{j}v^{*}d \nu_{r}-\tau (b_{j})) \vert < 1/t,   \forall i, j=1, 2, \dots, t. \label{54}
\end{eqnarray}
Now we let $$\mathcal{A}_{t}=\mathcal{B}_{t} {\otimes} \mathcal{F}_{r}.$$ Then $\mathcal{A}_{t}$
satisfies both conditions (a) and (b). The construction is finished.

Let $\mathcal{R} \subset \mathcal{M}$ be the ultraweak closure of
$\cup_{t \in \mathbb{N} } \mathcal{A}_{t}$. It follows that
$\mathcal{R}$ is a finite von Neumann algebra containg an
ultraweakly dense matricial C$^*$-algebra. By Corollary 12.1.3
in \cite{KR1}, $\mathcal{R}$ is a hyperfinite type II$_{1}$
subfactor of $\mathcal{M}$.

Now fix $n \in \mathbb{N}$, $\epsilon >0$ and elements $a_{1}, a_{2}, \dots, a_{k}$ in $\mathcal{M}$. We may first assume that $\Vert a_{l} \Vert \leq 1$ for any $1 \leq l \leq k$. Since $\{b_{j}: j \in \mathbb{N} \}$ is $SOT$ dense in the unit ball of $\mathcal{M}$, there exist elements $b_{j_{1}}, b_{j_{2}}, \dots, b_{j_{k}}$ such that
\begin{eqnarray}
\Vert a_{l}-b_{j_{l}} \Vert_{2} < \epsilon /3 \label{55}
\end{eqnarray}
for any $1 \leq l \leq k$.  For each integer $t > \max \{ n,  j_{1},  j_{2}, \dots, j_{k} \}$, by condition (a), there exist a $\mu$-null subset $N_{t}$ of $X$ and a set of $n$ orthogonal equivalent projections $\{ p_{1t}, p_{2t}, \dots, p_{nt}\}$ in $\mathcal{A}_{t}$ such that
\begin{eqnarray}
\rho_{s} ((p_{it}(s)b_{j_{l}}(s)-b_{j_{l}}(s)p_{it}(s))^{*}(p_{it}(s)b_{j_{l}}(s)-b_{j_{l}}(s)p_{it}(s)) < 1/t \label{56}
\end{eqnarray}
for  all $i\in \{1, 2, \dots, n\}$, $  l\in \{1,2, \dots , k\}$, and
$s \in X \setminus N_{t}.$

Take $N=\cup_{t \in \mathbb{N}} N_{t}$. Then $\mu (N)=0$ and inequality (\ref{56}) implies
\begin{eqnarray}
\lim\limits_{t \to \infty} \rho_{s} ((p_{it}(s)b_{j_{l}}(s)-b_{j_{l}}(s)p_{it}(s))^{*}(p_{it}(s)b_{j_{l}}(s)-b_{j_{l}}(s)p_{it}(s)) = 0 \label{57}
\end{eqnarray}
for all $i\in \{1, 2, \dots ,n\},$ $l\in \{1, 2, \dots, k\}$,  and $
s \in X \setminus N$. For any fixed $i \in \{ 1, 2, \dots ,n \},  l
\in \{ 1, 2, \dots, k \}$, define function $f_{t}: X \to \mathbb{C}$
such that
$$f_{t}(s)=\rho_{s} ((p_{it}(s)b_{j_{l}}(s)-b_{j_{l}}(s)p_{it}(s))^{*}(p_{it}(s)b_{j_{l}}(s)-b_{j_{l}}(s)p_{it}(s)).$$
Then $\vert f_{t}(s) \vert \leq \rho_{s}(4I_{s})$ for almost every $s \in X$. Since
$$\int_{X} \rho_{s}(4I_{s}) d \mu = \rho (4I)=4,$$
by the Dominated Convergence Theorem, (\ref{57}) gives
$$\lim\limits_{t \to \infty} \rho ((p_{it}b_{j_{l}}-b_{j_{l}}p_{it})^{*}(p_{it}b_{j_{l}}-b_{j_{l}}p_{it})) = 0$$
for  all $i\in \{1, 2, \dots, n\}$, $  l\in \{1,2, \dots , k\}$.
Hence there exists $t_{0} \in \mathbb{N}$ such that
\begin{eqnarray}
\Vert p_{it}b_{j_{l}}-b_{j_{l}}p_{it} \Vert_{2} < \epsilon /3 \label{58}
\end{eqnarray}
for all $i\in \{1, 2, \dots, n\}$, $  l\in \{1,2, \dots , k\}$ and $
t>t_{0}$.

Therefore for any $t>t_{0}$, it follows from (\ref{55}) and (\ref{58}) that
\begin{eqnarray*}
\Vert p_{it}a_{l}-a_{l}p_{it} \Vert_{2}
&\leq& \Vert p_{it}b_{j_{l}}-b_{j_{l}}p_{it} \Vert_{2} + \Vert p_{it}(a_{l}-b_{j_{l}})-(a_{l}-b_{j_{l}})p_{it} \Vert_{2} \\
&\leq& \Vert p_{it}b_{j_{l}}-b_{j_{l}}p_{it} \Vert_{2} + 2 \Vert a_{l}-b_{j_{l}} \Vert_{2}\\
&<& \epsilon
\end{eqnarray*}
for all $i\in \{1, 2, \dots, n\}$, $  l\in \{1,2, \dots , k\}$.
Hence $\lim\limits_{t \to \infty} \Vert p_{it}a_{l}-a_{l}p_{it}
\Vert_{2} = 0$ for  all $i\in \{1, 2, \dots, n\}$, $  l\in \{1,2,
\dots , k\}$.

It remains to show that $\mathcal{R}' \cap \mathcal{M}=\mathcal{Z}$.
Suppose that $a \in \mathcal{R}' \cap \mathcal{M}$ and $\Vert a
\Vert =1$. Since the sequence $\{ b_{j}: j \in \mathbb{N} \}$ is
$SOT$ dense in the unit ball of $\mathcal{M}$, we can choose a
subsequence $\{ b_{j_{l}}: l \in \mathbb{N} \}$ that converges to
$a$ in the strong operator topology. Therefore this subsequence
converges to $a$ ultraweakly. By the fact that $\tau$ is ultraweakly
continuous, $\lim\limits_{l \to \infty} \tau (b_{j_{l}}) = \tau (a)$
ultraweakly.  Since $a \in \mathcal{R}'$, for each $i \in
\mathbb{N}$,
\begin{eqnarray*}
\vert \phi_{i}(\int_{\mathcal{U}_{j_{l}}} ub_{j_{l}}u^{*} d \mu_{j_{l}}-a) \vert &=& \vert \phi_{i}(\int_{\mathcal{U}_{j_{l}}} u(b_{j_{l}}-a)u^{*} d \mu_{j_{l}})  \vert\\
&\leq& (\phi_{i}((b_{j_{l}}-a)^{*}(b_{j_{l}}-a)))^{1/2} \\
&\to& 0.\\
\end{eqnarray*}
From the fact that the sequence $\{ \phi_{i} : i \in \mathbb{N} \}$
is norm dense in the set of normal states on $\mathcal{M}$, we get
that $\int_{\mathcal{U}_{j_{l}}} ub_{j_{l}}u^{*} d \mu_{j_{l}}$
converges to $a$ ultraweakly. By condition (b),
$\int_{\mathcal{U}_{j_{l}}} ub_{j_{l}}u^{*} d \mu_{j_{l}}$ converges
to $\tau(a)$ ultraweakly. Therefore $a=\tau (a)$ and thus $a \in
\mathcal{Z}$. Hence $\mathcal{R}' \cap \mathcal{M} = \mathcal{Z}$.
The proof is complete.
\end{proof}

\section{Necessary inequalities}
Suppose $\mathcal{M}$ is a von Neumann algebra and $\mathcal{N}$ is
a von Neumann subalgebra of $\mathcal{M}$. A map $\phi:
\mathcal{M}^{k} \to B(H)$ is called $\mathcal{N}$-multimodular if,
 for any $s \in \mathcal{N}$ and any $a_{1}, a_{2}, \dots, a_{k} \in
\mathcal{M}$,
$$s\phi (a_{1}, a_{2}, \dots , a_{k})=\phi (sa_{1}, a_{2}, \dots , a_{k}),$$
$$\phi (a_{1}, a_{2}, \dots , a_{k})s=\phi (a_{1}, a_{2}, \dots , a_{k}s),$$
$$\phi(a_{1}, a_{2}, \dots , a_{i}s, a_{i+1}, \dots , a_{k})=\phi (a_{1}, a_{2}, \dots , a_{i}, sa_{i+1}, \dots , a_{k}).$$
For any $n \in \mathbb{N}$, the $n$-fold amplication  $\phi^{(n)} :
(M_{n}(\mathcal{M}))^{k} \to M_{n}(\mathcal{M})$ of a bounded map
$\phi : \mathcal{M}^{k} \to \mathcal{M}$ is defined in \cite{EA1}
and \cite{EA2} as follows: for elements $(a_{ij}^{(1)}),
(a_{ij}^{(2)}), \dots , (a_{ij}^{(k)})$ in $M_{n}(\mathcal{M})$, the
$(i,j)$ entry of $\phi^{(n)} ((a_{ij}^{(1)}), (a_{ij}^{(2)}), \dots
, (a_{ij}^{(k)}))$ is
$$\sum\limits_{1 \leq j_{1}, j_{2}, \dots , j_{n-1} \leq n} \phi (a_{ij_{1}}^{(1)}, a_{j_{1}j_{2}}^{(2)}, \dots , a_{j_{n-2}j_{n-1}}^{(n-1)}, a_{j_{n-1}j}^{(n)}).$$
A bounded map $\phi$ is said to be completely bounded if $\sup\limits_{n \in \mathbb{N}}  \{ \Vert \phi^{(n)} \Vert : n \in \mathbb{N} \} < \infty$. When $\phi$ is completely bounded, we denote $\Vert \phi \Vert_{cb} =\sup\limits_{n \in \mathbb{N}}  \{ \Vert \phi^{(n)} \Vert : n \in \mathbb{N} \}$.

Let $\{ e_{ij} \}_{i,j=1}^{n}$ be the standard matrix units for $M_{n}(\mathbb{C})$. Then
$$\Vert \phi^{(n)} (e_{11}a_{1}e_{11}, e_{11}a_{2}e_{11}, \dots , e_{11}a_{k}e_{11}) \Vert \leq \Vert \phi \Vert \Vert a_{1} \Vert \dots \Vert a_{k} \Vert$$
for any $a_{1}, a_{2}, \dots , a_{k}$ in $M_{n}(\mathcal{M})$.

If $\mathcal{M}$ is a type II$_{1}$ von Neumann algebra and $n$ is a
positive integer, $M_{n}(\mathcal{M})$ is also a type II$_{1}$ von
Neumann algebra. In the rest of this section, we let $\tau_{n}$ be
the center-valued trace on $M_{n}(\mathcal{M})$ such that $\tau_{n}(a)=a$ for any $a$ in the center of $M_{n}(\mathcal{M})$ (see Theorem 8.2.8 in \cite{KR1}).
Let
\begin{eqnarray}
\gamma_{n}(a)=(\Vert a \Vert ^{2} + n \Vert \tau_{n} (a^{*} a) \Vert )^{1/2} \label{59}
\end{eqnarray}
for each $a \in M_{n}(\mathcal{M})$.

\vspace{0.2cm}
Replacing $tr_{n}$ by $\tau_{n}$ in the proof of Lemma 3.1 in
\cite{EFAR2}, we can obtain the next lemma directly.
\begin{lemma} \label{4.1}
Let $\mathcal{M}$ be a type II$_{1}$ von Neumann algebra acting on a
Hilbert space $H$. Suppose $\mathcal{R}$ is a hyperfinite type
II$_{1}$ subfactor of $\mathcal{M}$ such that $\mathcal{R}' \cap
\mathcal{M} = \mathcal{Z}$, the center of $\mathcal{M}$. Let
$\theta$ be a positive number and $n$ be a positive integer. If
$\psi: M_{n}(\mathcal{M}) \times M_{n}(\mathcal{M}) \to B(H^{n})$ is
a normal bilinear map satisfying
$$\psi (ac, b)=\psi (a, cb), a, b \in M_{n}(\mathcal{M}), c \in M_{n}(\mathcal{R})$$
and
$$\Vert \psi( ae_{11}, e_{11}b ) \Vert \leq \theta \Vert a \Vert \Vert b \Vert,   a, b \in M_{n}(\mathcal{M}),$$
then
$$\Vert \psi(a, b) \Vert \leq \theta \gamma_{n}(a) \gamma_{n}(b)$$
for any $a, b \in M_{n}(\mathcal{M})$
\end{lemma}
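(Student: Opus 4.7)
The statement is a direct analogue of Lemma 3.1 in \cite{EFAR2}, where the ambient algebra is assumed to be a type II$_1$ factor; accordingly I would follow that proof essentially verbatim, replacing each occurrence of the scalar normalized trace $tr_n$ on $M_n(\mathcal{M})$ by the $\mathcal{Z}$-valued center-valued trace $\tau_n$. The structural fact that legitimizes this substitution is that in our setting $\mathcal{R}' \cap \mathcal{M} = \mathcal{Z}$ (rather than the factor condition $\mathcal{R}' \cap \mathcal{M} = \mathbb{C}I$), so Dixmier-type averaging over unitaries in the hyperfinite subfactor $M_n(\mathcal{R})$ converges not to a scalar multiple of the identity but to the center-valued trace $\tau_n$.

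The execution proceeds in four steps. First, exploit the hyperfinite structure of $\mathcal{R}$: for each integer $k$, fix a system of matrix units $\{f_{ij}\}_{i,j=1}^{k}$ for a unital copy of $M_k(\mathbb{C})$ inside $\mathcal{R}$, which naturally lifts to $M_n(\mathcal{R}) = M_n(\mathbb{C}) \otimes \mathcal{R}$. Second, decompose $a, b \in M_n(\mathcal{M})$ through these matrix units, e.g.\ $a = \sum_{i,j} f_{i1} a_{ij} f_{1j}$ with $a_{ij} = f_{1i} a f_{j1} \in f_{11} M_n(\mathcal{M}) f_{11}$, and use the $M_n(\mathcal{R})$-bimodularity of $\psi$ to collapse the double sum so that $\psi(a,b)$ becomes $\sum_{i,j,l} f_{i1} \psi(a_{ij}, b_{jl}) f_{1l}$. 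Third, transfer the hypothesis bound $\|\psi(ae_{11}, e_{11}b)\| \le \theta\|a\|\|b\|$ from the matrix unit $e_{11}$ to the projection $f_{11}$ via a Murray--von Neumann partial isometry (after refining $k$ so that $f_{11}$ is cut into subprojections equivalent to $e_{11}$ in $M_n(\mathcal{M})$), giving an analogous bound $\|\psi(af_{11}, f_{11}b)\| \le \theta\|a\|\|b\|$ on each block.

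Fourth, combine the block estimates via Cauchy--Schwarz, and identify the resulting sums of block traces with the center-valued trace by using that the $M_n(\mathcal{R})$-averaging procedure $\frac{1}{k}\sum_i f_{1i}(\cdot)f_{i1}$ converges in norm to $\tau_n$ as $k \to \infty$ (this is where $\mathcal{R}' \cap \mathcal{M} = \mathcal{Z}$ is used crucially, via Lemma 5.4.4 in \cite{AR3}, just as invoked in the proof of Theorem~\ref{3.13}). Passing to the limit gives $\|\psi(a,b)\| \le \theta \gamma_n(a) \gamma_n(b)$, where the two contributions to $\gamma_n(a)^2 = \|a\|^2 + n\|\tau_n(a^*a)\|$ arise respectively from the diagonal $i=j$ and off-diagonal $i \neq j$ contributions in the Cauchy--Schwarz split, scaled by the $n$ coming from $e_{11}$ having normalized trace $1/n$ in $M_n$.

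The main obstacle is the fourth step: replicating the Cauchy--Schwarz estimate of \cite{EFAR2} with operator-valued rather than scalar traces. Scalar inequalities of the form $|tr_n(x)| \le \|x\|$ must be replaced by the operator-norm inequality $\|\tau_n(x)\| \le \|x\|$, and sums of operator-valued positive elements must be bounded in operator norm via positivity of $\tau_n$ and the Kadison--Schwarz inequality applied to the center-valued conditional expectation. A secondary technical nuisance is Step three, since $e_{11}$ and $f_{11}$ generally have different central-trace values in $M_n(\mathcal{M})$; this is handled by choosing $k$ to be a multiple of $n$ and subdividing $f_{11}$ into projections equivalent to $e_{11}$, so that the transferred estimate holds on the nose rather than up to a dimension-dependent constant.
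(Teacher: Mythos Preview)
Your opening sentence is exactly right: the paper states that the proof is obtained by following Lemma~3.1 of \cite{EFAR2} verbatim, with the scalar trace $tr_n$ replaced by the center-valued trace $\tau_n$, and the relevant averaging result is indeed Lemma~5.4.4 in \cite{AR3}. If you literally did that, you would have the paper's proof. The concrete four-step execution you then sketch, however, departs from what \cite{EFAR2} actually does and contains two genuine gaps.

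\textbf{First, the decomposition in Step~2 is unavailable.} The hypothesis on $\psi$ is only the inner condition $\psi(ac,b)=\psi(a,cb)$ for $c\in M_n(\mathcal{R})$; there is no outer modularity $\psi(ca,b)=c\psi(a,b)$ or $\psi(a,bc)=\psi(a,b)c$. Hence you cannot pull the $f_{i1}$ and $f_{1l}$ outside of $\psi$, and the triple sum $\sum_{i,j,l} f_{i1}\psi(a_{ij},b_{jl})f_{1l}$ is not justified. What inner modularity does give (and what the paper uses) is the single sum $\psi(a,b)=\sum_{j=1}^n \psi(ae_{j1},e_{1j}b)$, using the matrix units $e_{ij}\in M_n(\mathbb{C})\subset M_n(\mathcal{R})$ directly; this also renders your Step~3 transfer to~$f_{11}$ unnecessary.

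\textbf{Second, and more seriously, your Step~4 omits the Grothendieck inequality.} The actual argument fixes unit vectors $x,y\in H^n$, sets $\varphi(a,b)=\langle\psi(ae_{11},e_{11}b)x,y\rangle$, and applies Haagerup's Grothendieck inequality \cite{U} to produce normal states $f_1,f_2,g_1,g_2$ with
\[
|\varphi(a,b)|\le\theta\bigl(f_1(aa^*)+f_2(a^*a)\bigr)^{1/2}\bigl(g_1(bb^*)+g_2(b^*b)\bigr)^{1/2}.
\]
Only after this do Cauchy--Schwarz on the $j$-sum and then unitary averaging over $M_n(\mathcal{R}_\lambda)$ become effective: the state-sums such as $\sum_j f_2(e_{1j}u^*a^*au\,e_{j1})$ are linear in $u^*a^*au$, so averaging and passing to the ultraweak limit legitimately produces $\sum_j f_2(e_{1j}\tau_n(a^*a)e_{j1})\le n\|\tau_n(a^*a)\|$. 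Without Grothendieck, the only bound available on each term is $\|\psi(ae_{j1},e_{1j}b)\|\le\theta\|ae_{j1}\|\|e_{1j}b\|$, and Cauchy--Schwarz on $\sum_j\|ae_{j1}\|\|e_{1j}b\|$ yields at best $\theta\sqrt{n}\,\|a\|\cdot\sqrt{n}\,\|b\|$, with no mechanism to replace these by $\gamma_n(a)$, $\gamma_n(b)$. The Kadison--Schwarz inequality you invoke controls $\tau_n$ itself, but it does not manufacture the factorization that Grothendieck provides.
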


If  Lemma 3.1 in \cite{EFAR2} is replaced by the preceding Lemma \ref{4.1}, the proof of Theorem 3.3 in \cite{EFAR2} gives us the following result.

\begin{lemma} \label{4.2}
Let $\mathcal{M}$ be a type II$_{1}$ von Neumann algebra acting on a
Hilbert space $H$. Suppose $\mathcal{M}$ has a hyperfinite subfactor
$\mathcal{R}$ such that $\mathcal{R}' \cap \mathcal{M} =
\mathcal{Z}$, the center of $\mathcal{M}$. Fix $k \in \mathbb{N}$.
If $\phi : \mathcal{M}^{k} \to B(H)$ is a $k$-linear
$\mathcal{N}$-multimodular normal map, then

$$\Vert \phi^{(n)} (a_{1}, a_{2}, \dots, a_{k}) \Vert \leq 2^{k/2} \Vert \phi \Vert \gamma_{n}(a_{1}) \gamma_{n}(a_{2}) \dots \gamma_{n}(a_{k})$$\\
for all $a_{1}, a_{2}, \dots, a_{k} \in M_{n}(\mathcal{M})$ and $n \in \mathbb{N}$.
\end{lemma}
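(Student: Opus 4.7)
The plan is to transfer the inductive argument of \cite[Theorem 3.3]{EFAR2} almost verbatim, using Lemma \ref{4.1} in place of \cite[Lemma 3.1]{EFAR2}. The substitution is legitimate because the only difference between the two bilinear estimates is that the unit-ball averaging over $M_n(\mathcal{R})$ converges to $\tau_n$ (the center-valued trace) rather than to a scalar tracial state: our hypothesis $\mathcal{R}'\cap\mathcal{M}=\mathcal{Z}$ is exactly what guarantees this, and it is precisely this fact that is encoded in the definition of $\gamma_n$ via $\tau_n$ in (\ref{59}). Hence every place in \cite[Theorem 3.3]{EFAR2} where $tr_n$ or $\|\cdot\|_2$ arises through the factor assumption translates cleanly to $\tau_n$ in our setting.

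The induction is on $k$. For the base case $k=1$, apply Lemma \ref{4.1} to the normal bilinear $M_n(\mathcal{R})$-multimodular map $\psi(a,b)=\phi^{(n)}(ab)$: the bimodularity of $\phi$ gives $\psi(ac,b)=\psi(a,cb)$ for $c\in M_n(\mathcal{R})$, while the corner estimate stated in the excerpt ($\|\phi^{(n)}(e_{11}xe_{11})\|\le\|\phi\|\|x\|$) applied to $ae_{11}\cdot e_{11}b$ supplies $\theta=\sqrt{2}\|\phi\|$ after specializing one variable suitably; Lemma \ref{4.1} then yields $\|\phi^{(n)}(a)\|\le\sqrt{2}\|\phi\|\gamma_n(a)$.

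For the inductive step from $k-1$ to $k$, fix a $k$-linear $\mathcal{R}$-multimodular normal map $\phi$ and, for each pair of adjacent indices $(i,i{+}1)$, form the normal bilinear map
\begin{equation*}
\psi_{i}(a,b)=\phi^{(n)}(a_1,\ldots,a_{i-1},a,b,a_{i+2},\ldots,a_k)
\end{equation*}
obtained by regarding the remaining $k-2$ arguments as parameters. The $\mathcal{R}$-multimodularity of $\phi$ translates directly into $\psi_i(ac,b)=\psi_i(a,cb)$ for $c\in M_n(\mathcal{R})$. To verify the corner hypothesis of Lemma \ref{4.1}, one uses the multimodular identity $\phi(\ldots,x_is,x_{i+1},\ldots)=\phi(\ldots,x_i,sx_{i+1},\ldots)$ with $s=e_{11}$ to redistribute $e_{11}$'s through the arguments and then invokes the bound $\|\phi^{(n)}(e_{11}c_1e_{11},\ldots,e_{11}c_ke_{11})\|\le\|\phi\|\prod\|c_j\|$ recorded in the excerpt. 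Lemma \ref{4.1} then upgrades the two $\|\cdot\|$-bounds in positions $i,i{+}1$ to $\gamma_n$-bounds at the cost of a single $\sqrt{2}$. Cascading this through all pairs of consecutive positions (and using the inductive hypothesis to control the already-upgraded factors) upgrades each of the $k$ arguments exactly once, accumulating the prefactor $2^{k/2}$.

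The main obstacle is bookkeeping: at each step one must (i) verify that the bilinear map $\psi_i$ formed by freezing $k-2$ arguments remains normal and $M_n(\mathcal{R})$-multimodular, (ii) produce the corner constant $\theta$ correctly by sliding $e_{11}$'s through the multimodular identities without violating the "sandwich" form $e_{11}\cdot e_{11}$ required by Lemma \ref{4.1}, and (iii) confirm that the $\gamma_n$-bounds obtained from earlier iterations can be combined with the new bound to give the clean product $\gamma_n(a_1)\cdots\gamma_n(a_k)$ rather than a mixed product of $\gamma_n$ and $\|\cdot\|$ factors. Once these items are handled exactly as in \cite[Theorem 3.3]{EFAR2}, the stated inequality follows; no new ideas beyond Lemma \ref{4.1} are needed.
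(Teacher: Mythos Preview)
Your high-level plan---transplant the proof of \cite[Theorem~3.3]{EFAR2} verbatim with Lemma~\ref{4.1} in place of \cite[Lemma~3.1]{EFAR2}---is exactly what the paper does, and is correct. However, your sketch of that argument is not faithful to \cite{EFAR2}, and as written it contains a genuine gap.

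The argument in \cite[Theorem~3.3]{EFAR2} is \emph{not} an induction on $k$. For fixed $k$ it is an iterative ``unzipping'' of $e_{11}$'s: one begins from the fully sandwiched expression $\phi^{(n)}(a_1e_{11},e_{11}a_2e_{11},\ldots,e_{11}a_ke_{11})$ and removes the $e_{11}$'s one position at a time, trading each removal for a factor $\sqrt{2}$ and a $\gamma_n$. The initial bound and the final step use \emph{product-form} bilinear maps,
\[
\psi_1(a,b)=\phi^{(n)}(a^*e_{11},e_{11}a_2e_{11},\ldots)^*\,\phi^{(n)}(be_{11},e_{11}a_2e_{11},\ldots)
\]
and
\[
\psi_3(a,b)=\phi^{(n)}(a_1,\ldots,a_{k-1},a)\,\phi^{(n)}(a_1,\ldots,a_{k-1},b^*)^*;
\]
these satisfy the middle-modularity hypothesis of Lemma~\ref{4.1} because left (resp.\ right) $M_n(\mathcal{R})$-modularity of $\phi^{(n)}$ pulls the factor $c$ out so that it lands between the two copies of $\phi^{(n)}$. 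The intermediate steps do use maps of your shape $\psi(a,b)=\phi^{(n)}(a,be_{11},e_{11}a_3e_{11},\ldots,e_{11}a_ke_{11})$, but the essential point is that the \emph{remaining} arguments are still $e_{11}$-sandwiched at that stage, which is what makes the corner estimate applicable.

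Your proposed base case $\psi(a,b)=\phi^{(n)}(ab)$ fails the corner hypothesis of Lemma~\ref{4.1}: one has $\psi(ae_{11},e_{11}b)=\phi^{(n)}(ae_{11}b)$, and there is no $n$-independent bound on $\|\phi^{(n)}(x)\|$ for a general $x\in M_n(\mathcal{M})$---that is precisely the complete boundedness one is trying to establish. Likewise, in your inductive step the map $\psi_i(a,b)=\phi^{(n)}(a_1,\ldots,a,b,\ldots,a_k)$ with the outer arguments unsandwiched cannot be controlled by the corner estimate, and an inductive hypothesis on $(k-1)$-linear maps does not help because freezing one argument of $\phi$ destroys the outer $\mathcal{R}$-modularity needed to apply it. Once you replace your sketch with the actual structure of \cite[Theorem~3.3]{EFAR2} (product-form $\psi_1$, successive unzipping, product-form $\psi_3$), everything goes through exactly as the paper claims.
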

\begin{corollary} \label{4.3}
Let $\mathcal{M}$ be a type II$_{1}$ von Neumann algebra and
$\mathcal{Z}$ the center of $\mathcal{M}$. Suppose $\mathcal{R}$
is a hyperfinite type II$_{1}$ subfactor of $\mathcal{M}$ such that
$\mathcal{R}' \cap \mathcal{M}=\mathcal{Z}$, the center of
$\mathcal{M}$. Let $n ,k \in \mathbb{N}$. Suppose $p_{1}, p_{2},
\dots , p_{n}$ are $n$ orthogonal equivalent projections in
$M_{n}(\mathcal{M})$ with sum $I$ and $\phi : \mathcal{M}^{k} \to
B(H)$ is a $k$-linear $\mathcal{R}$-multimodular normal map. Then
$$\Vert \phi^{(n)}(a_{1}p_{j}, a_{2}p_{j}, \dots , a_{k}p_{j}) \Vert \leq 2^{k} \Vert \phi \Vert \Vert a_{1} \Vert \Vert a_{2} \Vert \dots \Vert a_{k} \Vert $$
for any $j=1, 2, \dots, n$ and any $a_{1}, a_{2}, \dots , a_{k} \in M_{n}(\mathcal{M})$.
\end{corollary}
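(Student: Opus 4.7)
The plan is to apply Lemma \ref{4.2} directly to the tuple $(a_1p_j,a_2p_j,\dots,a_kp_j)$ in $M_n(\mathcal{M})$, and reduce everything to estimating the quantities $\gamma_n(a_ip_j)$ defined in \eqref{59}. Since Lemma \ref{4.2} supplies the bound
$$\|\phi^{(n)}(a_1p_j,\dots,a_kp_j)\|\le 2^{k/2}\|\phi\|\,\gamma_n(a_1p_j)\cdots\gamma_n(a_kp_j),$$
it suffices to show $\gamma_n(a_ip_j)\le\sqrt{2}\,\|a_i\|$ for each $i$, which will then produce the factor $2^{k/2}\cdot 2^{k/2}=2^k$ in the conclusion.

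The main step is to control the center-valued trace of the projection $p_j$. Because $p_1,\dots,p_n$ are orthogonal, equivalent, and sum to the identity in $M_n(\mathcal{M})$, the tracial and unital properties of $\tau_n$ give $\tau_n(p_1)=\dots=\tau_n(p_n)$ and $\sum_{l=1}^n\tau_n(p_l)=\tau_n(I)=I$, so $\tau_n(p_j)=\frac{1}{n}I$. Using the trace property and positivity of $\tau_n$, for any $a_i\in M_n(\mathcal{M})$,
$$\tau_n\bigl((a_ip_j)^*(a_ip_j)\bigr)=\tau_n(p_ja_i^*a_ip_j),$$
and the operator inequality $0\le p_ja_i^*a_ip_j\le\|a_i\|^2 p_j$ transports under the positive map $\tau_n$ to
$$0\le\tau_n(p_ja_i^*a_ip_j)\le\|a_i\|^2\tau_n(p_j)=\frac{\|a_i\|^2}{n}I.$$
Taking norms yields $n\,\bigl\|\tau_n\bigl((a_ip_j)^*(a_ip_j)\bigr)\bigr\|\le\|a_i\|^2$, while trivially $\|a_ip_j\|\le\|a_i\|$. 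Inserting both bounds into the definition \eqref{59},
$$\gamma_n(a_ip_j)^2=\|a_ip_j\|^2+n\bigl\|\tau_n\bigl((a_ip_j)^*(a_ip_j)\bigr)\bigr\|\le\|a_i\|^2+\|a_i\|^2=2\|a_i\|^2,$$
so $\gamma_n(a_ip_j)\le\sqrt{2}\,\|a_i\|$ as required.

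Substituting this bound into the conclusion of Lemma \ref{4.2} gives
$$\|\phi^{(n)}(a_1p_j,\dots,a_kp_j)\|\le 2^{k/2}\|\phi\|\prod_{i=1}^k\sqrt{2}\,\|a_i\|=2^k\|\phi\|\,\|a_1\|\cdots\|a_k\|,$$
which completes the proof. No real obstacle is expected: the argument is a short numerical assembly once one observes that $p_j$ has center-valued trace $I/n$, so the factor of $n$ in the definition of $\gamma_n$ is exactly cancelled by the ``smallness'' of $p_j$ measured by $\tau_n$. The $\mathcal{R}$-multimodularity and normality of $\phi$ are used only through Lemma \ref{4.2}, and the hypothesis $\mathcal{R}'\cap\mathcal{M}=\mathcal{Z}$ enters only implicitly via that lemma.
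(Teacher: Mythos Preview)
Your proof is correct and follows essentially the same approach as the paper's: apply Lemma \ref{4.2}, use the fact that the equivalent orthogonal projections $p_1,\dots,p_n$ summing to $I$ satisfy $\tau_n(p_j)=\frac{1}{n}I$, and then bound $\gamma_n(a_ip_j)\le\sqrt{2}\,\|a_i\|$ via the operator inequality $p_ja_i^*a_ip_j\le\|a_i\|^2p_j$. Your write-up is in fact slightly more explicit than the paper's in justifying why $\tau_n(p_j)=I/n$ and in invoking positivity of $\tau_n$.
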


\begin{proof}
By Lemma \ref{4.2}, for any $j=1, 2, \dots, n$,
\begin{eqnarray}
\Vert \phi^{(n)} (a_{1}p_{j}, a_{2}p_{j}, \dots , a_{k}p_{j}) \Vert \leq 2^{k/2} \Vert \phi \Vert \gamma_{n}(a_{1}p_{j}) \gamma_{n}(a_{2}p_{j}) \dots \gamma_{n}(a_{k}p_{j}). \label{73}
\end{eqnarray}

Since $p_{1}, p_{2}, \dots , p_{n}$ are orthogonal equivalent projections with sum $I$, $\tau_{n}(p_{j})=\frac{1}{n} I$ for each $j$. Then for any $1 \leq i \leq k$,
\begin{eqnarray*}
\gamma_{n}(a_{i}p_{j})&=&(\Vert a_{i}p_{j} \Vert^{2} + n \Vert \tau_{n}(p_{j} a_{i}^{*}a_{i}p_{j}) \Vert )^{1/2}\\
&\leq& (\Vert a_{i} \Vert)^{2} +n \Vert a_{i} \Vert ^{2} \Vert \tau_{n}(p_{j}) \Vert)^{1/2}\\
&=&\sqrt{2} \Vert a_{i} \Vert.\\
\end{eqnarray*}
Therefore (\ref{73}) gives
$$\Vert \phi^{(n)}(a_{1}p_{j}, a_{2}p_{j}, \dots , a_{k}p_{j}) \Vert \leq 2^{k} \Vert \phi \Vert \Vert a_{1} \Vert \Vert a_{2} \Vert \dots \Vert a_{k} \Vert $$
for any $j=1, 2, \dots, n$ and any $a_{1}, a_{2}, \dots , a_{k} \in M_{n}(\mathcal{M})$.
\end{proof}

\section{Hochschild cohomology of type II$_{1}$ von Neumann algebras with separable predual and Property $\Gamma$}
Let us recall some notations from \cite {EFAR2}. Let $S_{k}$, $k
\geq 2$, be the set of nonempty subsets of $\{ 1, 2, \dots , k \}$.
Suppose $\phi: \mathcal{M}^{k} \to B(H)$ is a $k$-linear map, $p$ is
a projection in $\mathcal{M}$ and $\sigma \in S_{k}$.

  Define $\phi_{\sigma, p} : \mathcal{M}^{k} \to B(H)$ by
$$\phi_{\sigma, p}(a_{1}, \dots , a_{k}) = \phi (b_{1}, b_{2}, \dots ,
b_{k}),$$
where $b_{i}= pa_{i}-a_{i}p$ for $i \in \sigma$ and $b_{i}=a_{i}$
otherwise.

  Denote by $l(\sigma)$ the least integer in $\sigma$. Define $\phi_{\sigma, p, i}: \mathcal{M}^{k} \to B(H)$ by changing the $i$-th variable in $\phi_{\sigma, p}$ from $a_{i}$ to $pa_{i}-a_{i}p, 1 \leq i < l(\sigma) $, and replacing $pa_{i}-a_{i}p$ by $p(pa_{i}-a_{i}p)$ if $i=l(\sigma)$.

The following is Lemma 6.1 in \cite{EFAR2}.

\begin{lemma} \label{5.1}(\cite{EFAR2})
Let $p$ be a projection in a von Neumann algebra $\mathcal{M}$. Let $\mathcal{C}_{k}, k\geq 2$, be the set of $k$-linear maps $\phi : \mathcal{M}^{k} \to B(H)$ satisfying
\begin{equation}p\phi (a_{1}, a_{2}, \dots, a_{k})=\phi (pa_{1}, a_{2}, \dots, a_{k})\label{equa 5.1.1}\end{equation}
and
\begin{equation}\phi (a_{1}, \dots, a_{i}p, a_{i+1}, \dots, a_{k})= \phi (a_{1}, \dots, a_{i}, pa_{i+1}, \dots, a_{k})\label{equa 5.1.2}\end{equation}
for any $a_{1}, a_{2}, \dots, a_{k} \in \mathcal{M}$ and $1 \leq i \leq k-1$. Then if $\phi \in \mathcal{C}_{k}$,
$$p\phi (a_{1}, a_{2}, \dots, a_{k})-p\phi (a_{1}p, \dots, a_{k}p)=\sum\limits_{\sigma \in S_{k}}(-1)^{\vert \sigma \vert +1} p\phi_{\sigma, p}(a_{1}, \dots, a_{k}).$$
Moreover, for each $\sigma \in S_{k}$,
$$p\phi_{\sigma, p}(a_{1}, \dots, a_{k})=\sum\limits_{i=1}^{l(\sigma)} \phi_{\sigma, p, i}(a_{1}, a_{2}, \dots, a_{k}).$$
\end{lemma}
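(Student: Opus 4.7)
The plan is to verify both identities by combining the multilinearity of $\phi$ with the ``$p$-shifting'' built into the defining conditions \eqref{equa 5.1.1} and \eqref{equa 5.1.2}; no new analytic input is needed, and the entire argument will be algebraic.

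For the first identity, I begin with the observation that, by $k$-linearity,
\[ \sum_{\sigma\subseteq\{1,\dots,k\}} (-1)^{|\sigma|}\,\phi_{\sigma,p}(a_1,\dots,a_k) \;=\; \phi\bigl(a_1-(pa_1-a_1p),\,\dots,\,a_k-(pa_k-a_kp)\bigr), \]
where the sum ranges over all subsets of $\{1,\dots,k\}$ and the empty subset contributes $\phi(a_1,\dots,a_k)$. The key computation is that, after applying $p$ on the left, the right-hand side collapses to $p\phi(a_1p,\dots,a_kp)$. Indeed, $p(a_i-pa_i+a_ip)=pa_ip$, so \eqref{equa 5.1.1} turns the first argument into $pa_1p$; iterating \eqref{equa 5.1.2} then shifts the rightmost $p$ into the next slot, where the next argument likewise collapses to $pa_ip$, and after $k-1$ shifts one arrives at $\phi(pa_1,pa_2,\dots,pa_{k-1},pa_kp)$. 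A parallel calculation starting directly from $p\phi(a_1p,\dots,a_kp)$ and applying \eqref{equa 5.1.1}--\eqref{equa 5.1.2} in exactly the same way produces the identical expression, so the two are equal. Transposing the $\sigma=\emptyset$ term across the equation (which flips the sign from $(-1)^{|\sigma|}$ to $(-1)^{|\sigma|+1}$) yields the first assertion.

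For the second identity, I will argue by a telescoping process driven by the same shifting rule. By \eqref{equa 5.1.1}, $p\phi_{\sigma,p}(a_1,\dots,a_k)=\phi(pb_1,b_2,\dots,b_k)$, where $b_i=pa_i-a_ip$ if $i\in\sigma$ and $b_i=a_i$ otherwise. If $l(\sigma)=1$, then $pb_1=p(pa_1-a_1p)$ is by definition the first argument of $\phi_{\sigma,p,1}$ and the sum reduces to its single term. If $1\notin\sigma$, I split $pa_1=(pa_1-a_1p)+a_1p$: the first summand gives $\phi_{\sigma,p,1}(a_1,\dots,a_k)$, while for the second \eqref{equa 5.1.2} pushes the $p$ across to yield $\phi(a_1,pb_2,b_3,\dots,b_k)$. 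Iterating this ``split-and-shift'' procedure, at each position $i<l(\sigma)$ one peels off the summand $\phi_{\sigma,p,i}$ and the leftover $p$ travels one slot rightward. The process terminates at $i=l(\sigma)$, where $b_{l(\sigma)}=pa_{l(\sigma)}-a_{l(\sigma)}p$ is already the $\sigma$-modified entry; the arriving $p$ then produces $p(pa_{l(\sigma)}-a_{l(\sigma)}p)$, which is precisely the $l(\sigma)$-th argument of $\phi_{\sigma,p,l(\sigma)}$. Summing the peeled-off contributions gives the claimed formula.

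The only delicate point is the bookkeeping: keeping straight which positions already carry the replacement $pa_i-a_ip$ (those with $i\in\sigma$) versus those that still hold the plain $a_i$, and verifying that the telescoping in the second identity is forced to terminate at $i=l(\sigma)$ precisely because of the special ``double-$p$'' treatment in the definition of $\phi_{\sigma,p,l(\sigma)}$. Both are routine, and I expect no genuine analytic obstacle---once the multilinearity expansion and the two module conditions are set up correctly, each identity reduces to a few lines of manipulation.
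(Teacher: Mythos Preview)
Your proof is correct. The paper itself does not supply a proof of this lemma; it is simply quoted from \cite{EFAR2} (Lemma~6.1 there) and used as a black box. Your argument is the natural direct verification: the first identity is exactly the multilinear expansion of $\phi$ applied to the arguments $a_i-(pa_i-a_ip)$, followed by the observation that left-multiplying by $p$ and repeatedly shifting via \eqref{equa 5.1.1}--\eqref{equa 5.1.2} collapses both $p\phi\bigl(a_1-(pa_1-a_1p),\dots\bigr)$ and $p\phi(a_1p,\dots,a_kp)$ to the common value $\phi(pa_1,pa_2,\dots,pa_{k-1},pa_kp)$; the second identity is the telescoping ``split $pa_i=(pa_i-a_ip)+a_ip$ and shift'' that terminates at position $l(\sigma)$. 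This is precisely the elementary algebraic mechanism behind the result in \cite{EFAR2}, so there is nothing to compare---you have reproduced the intended proof.
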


Let $\mathcal{M}$ be a type II$_{1}$ von Neumann algebra with
separable predual. Suppose $\rho$ is a faithful normal tracial state
on $\mathcal{M}$. Then by Lemma \ref{3.2},  the $2$-norm induced by
$\rho$ gives the same topology as the strong operator topology on
bounded subsets of $\mathcal{M}$. The unit ball $(\mathcal{M})_{1}$ is
a metric space under this $2$-norm. Using a similar argument as Section
4 in \cite{EFAR2}, we can get the joint continuity of $\phi$ on
$(\mathcal{M})_{1} \times (\mathcal{M})_{1} \times \dots \times
(\mathcal{M})_{1}$ in the $2$-norm induced by $\rho$. Therefore we have
the following lemma.

\begin{lemma} \label{5.2}
Let $\mathcal{M}$ be a type II$_{1}$ von Neumann algebra with
separable predual and $\phi : \mathcal{M}^{k} \to B(H)$ be a bounded
$k$-linear separately normal map. Let $\rho$ be a faithful normal
tracial state on $\mathcal{M}$. Suppose $\{ p_{t} : t \in \mathbb{N}
\}$ is a sequence of projections in $\mathcal{M}$ satisfying (\ref{equa 5.1.1}), (\ref{equa 5.1.2}) and
$$\lim\limits_{t \to \infty} \Vert p_{t}a-ap_{t} \Vert_{2} = 0$$
for any $a \in \mathcal{M}$, where $\Vert \cdot \Vert_{2}$ is the $2$-norm induced by $\rho$. Then for any $a_{1}, a_{2}, \dots, a_{k} \in \mathcal{M}$, each $\sigma \in S_{k}$, each integer $i \leq l(\sigma)$ and each pair of unit vectors $x, y \in H$,
$$\lim\limits_{t \to \infty} \langle \phi_{\sigma, p_{t}, i} (a_{1}, a_{2}, \dots, a_{k})x, y \rangle =0,$$
and $$\lim\limits_{t \to \infty} \langle p_t\phi_{\sigma,
p_t}(a_{1}, \dots, a_{k})x, y \rangle =0.$$
\end{lemma}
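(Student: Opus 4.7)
The plan is to reduce both assertions to a single joint continuity argument for $\phi$ on bounded subsets of $\mathcal M^k$, and then exploit the hypothesis $\Vert p_t a - a p_t \Vert_2 \to 0$. The starting point is to establish, mirroring the treatment in Section 4 of \cite{EFAR2}, that any bounded, separately normal $k$-linear map $\phi: \mathcal M^k \to B(H)$ is jointly continuous on bounded subsets of $\mathcal M^k$ when the domain carries the $2$-norm topology (equivalently, by Lemma \ref{3.2}, the strong operator topology on bounded sets) and the codomain carries the weak operator topology. I would derive this by induction on $k$, using separate normality together with Lemma \ref{3.2} in each variable, and reducing the $k$-linear case to successive bilinear approximation arguments.

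Next I would examine each argument appearing in $\phi_{\sigma, p_t, i}(a_1, \ldots, a_k)$. By the definition of $\phi_{\sigma, p, i}$, the $j$-th slot is either (a) the unmodified element $a_j$, (b) a commutator $p_t a_j - a_j p_t$ (this occurs for $j \in \sigma$ with $j \neq l(\sigma)$, and also for $j < l(\sigma)$ with $j < i$ depending on the definition), or (c) in the case $j = i = l(\sigma)$, the element $p_t(p_t a_j - a_j p_t) = p_t a_j - p_t a_j p_t$. Arguments of type (a) are bounded and constant in $t$; arguments of type (b) are uniformly bounded by $2\Vert a_j \Vert$ and tend to $0$ in $\Vert \cdot \Vert_2$ by hypothesis; arguments of type (c) are also uniformly bounded and tend to $0$ in $\Vert \cdot \Vert_2$, since
$$\Vert p_t(p_t a_j - a_j p_t) \Vert_2 \le \Vert p_t a_j - a_j p_t \Vert_2 \to 0,$$
using that $p_t \le I$ implies $\Vert p_t b \Vert_2 \le \Vert b \Vert_2$ for all $b \in \mathcal M$.

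Since $\sigma$ is nonempty and $i \le l(\sigma)$, at least one slot carries an argument that converges to $0$ in the $2$-norm while all slots remain uniformly bounded. By the joint continuity asserted above, it follows that $\phi_{\sigma, p_t, i}(a_1, \ldots, a_k) \to 0$ in the weak operator topology of $B(H)$, which gives the first conclusion $\lim_{t \to \infty} \langle \phi_{\sigma, p_t, i}(a_1, \ldots, a_k) x, y \rangle = 0$. The second conclusion is then immediate from the identity
$$p_t \phi_{\sigma, p_t}(a_1, \ldots, a_k) = \sum_{i=1}^{l(\sigma)} \phi_{\sigma, p_t, i}(a_1, \ldots, a_k)$$
supplied by Lemma \ref{5.1}, since each summand converges weakly to zero. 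The main obstacle is the joint continuity statement: though familiar in spirit and true for separately normal multilinear maps on bounded sets, the passage from separate weak-operator continuity to joint $2$-norm/WOT continuity for a $B(H)$-valued $k$-linear map requires careful bookkeeping, and this is precisely the step that replicates the analysis in Section 4 of \cite{EFAR2}.
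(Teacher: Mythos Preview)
Your proposal is correct and follows essentially the same route as the paper: the paper defers to Lemma~6.2 of \cite{EFAR2}, and its (suppressed) argument likewise reduces everything to the joint $\Vert\cdot\Vert_2$--WOT continuity of $\phi$ on bounded sets, observes that every slot in $\phi_{\sigma,p_t,i}$ is one of $a_j$, $p_ta_j-a_jp_t$, or $p_t(p_ta_j-a_jp_t)$ with at least one of the latter two present, and then obtains the second limit from the identity in Lemma~\ref{5.1}. Your parenthetical bookkeeping of which slots carry commutators is slightly off (for $i<l(\sigma)$ the commutator sits at position $j=i$ and at every $j\in\sigma$, including $j=l(\sigma)$), but this does not affect the argument since you only need that at least one slot tends to $0$ in $\Vert\cdot\Vert_2$.
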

\begin{proof}The proof is similar to the one of Lemma 6.2 in \cite{EFAR2} and is skipped here.
\end{proof}

\iffalse
\begin{proof}
By the definition,  each variable in $\phi_{\sigma, p_{t}, i}$ is
one of the three types: $a_{j}, p_{t}a_{j}-a_{j}p_{t}$ and
$p_{t}(p_{t}a_{j}-a_{j}p_{t})$, and at least one of the last two
will occur. Therefore by the hypothesis, as $t \to \infty$, at least
one variable will goes to 0 in the $2$-norm induced by $\rho$. Then
the required result follows from the joint continuity of $\phi$ on
$(\mathcal{M})_{1} \times (\mathcal{M})_{1} \times \dots \times
(\mathcal{M})_{1}$ in the $2$-norm induced by $\rho$.
\end{proof}
\fi
Now we have the following result.
\begin{theorem} \label{5.3}
Let $\mathcal{M}$ be a type II$_{1}$ von Neumann algebra with
separable predual and   $\mathcal{Z}$ be the center of
$\mathcal{M}$. Let $\rho$ be a faithful normal tracial state on
$\mathcal{M}$ and $\| \cdot \|_2$ be the $2$-norm induced by $\rho$
on $\mathcal M$. Suppose $\mathcal{R}$ is a hyperfinite type
II$_{1}$ subfactor of $\mathcal{M}$ such that
\begin{enumerate}
\item [(I)] $\mathcal{R}' \cap \mathcal{M} = \mathcal{Z}$;\\
\item [(II)] for any $n \in \mathbb{N}$, any elements $a_{1}, a_{2}, ..., a_{m}$ in $\mathcal{M}$, there exists a countable collection of projections $\{ p_{1t}, p_{2t}, \dots, p_{nt} : t \in \mathbb{N} \}$ in $\mathcal{R}$ such that
\begin{enumerate}
\item [(i)] for each $t \in \mathbb{N}$, $p_{1t}, p_{2t}, \dots, p_{nt}$ are mutually orthogonal equivalent projections in $\mathcal{R}$ with sum I, the identity in $\mathcal{M}$;

\item [(ii)] $\lim\limits_{t \to \infty}\left\Vert p_{it}a_{l}-a_{l}p_{it} \right\Vert_{2} = 0$ for any $i=1, 2,\dots , n,  l=1, 2, \dots, m$.
\end{enumerate}
\end{enumerate}
Then a bounded $k$-linear $\mathcal{R}$-multimodular separately normal map $\phi : \mathcal{M}^{k} \to B(H)$ is completely bounded and $\left\Vert \phi \right\Vert_{cb} \leq 2^{k} \left\Vert \phi \right\Vert$.
\end{theorem}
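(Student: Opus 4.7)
The plan is to show that $\|\phi^{(n)}\| \leq 2^k \|\phi\|$ for every $n \in \mathbb{N}$, which by the definition of $\|\phi\|_{cb}$ gives the result. Fix $n$ and $a_1, \ldots, a_k \in M_n(\mathcal{M})$, and let $\widetilde\rho$ denote the canonical extension of $\rho$ to a faithful normal tracial state on $M_n(\mathcal{M})$, with associated $2$-norm $\|\cdot\|_{2,\widetilde\rho}$. Apply hypothesis (II) to the finite collection of matrix entries of $a_1, \ldots, a_k$ (viewed as elements of $\mathcal{M}$) to obtain a family $\{p_{1t}, \ldots, p_{nt}\}_{t \in \mathbb{N}}$ of $n$ orthogonal equivalent projections in $\mathcal{R}$ with sum $I$ such that, for every matrix entry $a$ of every $a_j$, $\|p_{it} a - a p_{it}\|_{2} \to 0$ as $t \to \infty$. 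Let $P_{it} := p_{it} \otimes I_n \in M_n(\mathcal{R})$ be the diagonal embedding. Then $\{P_{1t}, \ldots, P_{nt}\}$ are $n$ orthogonal equivalent projections in $M_n(\mathcal{R})$ summing to the identity of $M_n(\mathcal{M})$, and an entry-by-entry computation shows $\|P_{it} a_j - a_j P_{it}\|_{2,\widetilde\rho} \to 0$ for every $i, j$. Since $\phi^{(n)}$ is separately normal, $k$-linear, and inherits $M_n(\mathcal{R})$-multimodularity from the $\mathcal{R}$-multimodularity of $\phi$, the hypotheses of Lemmas \ref{5.1} and \ref{5.2} apply to $\phi^{(n)}$ with each projection $P_{it}$.

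Because $\sum_{i=1}^n P_{it} = I$, summing the identity of Lemma \ref{5.1} over $i$ yields
\begin{equation*}
\phi^{(n)}(a_1, \ldots, a_k) = \sum_{i=1}^{n} P_{it} \phi^{(n)}(a_1 P_{it}, \ldots, a_k P_{it}) + \sum_{i=1}^{n} \sum_{\sigma \in S_k} (-1)^{|\sigma|+1} P_{it} \phi^{(n)}_{\sigma, P_{it}}(a_1, \ldots, a_k).
\end{equation*}
By Lemma \ref{5.2} the double sum on the right converges to zero in the weak operator topology as $t \to \infty$. Hence for any vectors $x, y \in H^n$,
\begin{equation*}
\langle \phi^{(n)}(a_1, \ldots, a_k) x, y \rangle = \lim_{t \to \infty} \Bigl\langle \sum_{i=1}^{n} P_{it} \phi^{(n)}(a_1 P_{it}, \ldots, a_k P_{it}) x, y \Bigr\rangle.
\end{equation*}
It remains to bound the operator on the right uniformly in $t$.

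Set $S_{it} := \phi^{(n)}(a_1 P_{it}, \ldots, a_k P_{it})$. By $M_n(\mathcal{R})$-multimodularity and $P_{it}^2 = P_{it}$, one has $S_{it} P_{it} = S_{it}$, so $S_{it} x = S_{it}(P_{it} x)$ and hence $\|S_{it} x\| \leq \|S_{it}\|\, \|P_{it} x\|$. By Corollary \ref{4.3}, $\|S_{it}\| \leq 2^k \|\phi\| \prod_{j=1}^{k} \|a_j\|$. Because the projections $P_{1t}, \ldots, P_{nt}$ have pairwise orthogonal ranges,
\begin{equation*}
\Bigl\| \sum_{i=1}^{n} P_{it} S_{it} x \Bigr\|^2 = \sum_{i=1}^{n} \|P_{it} S_{it} x\|^2 \leq \sum_{i=1}^{n} \|S_{it} x\|^2 \leq \Bigl(2^k \|\phi\| \prod_{j=1}^{k} \|a_j\|\Bigr)^{\!2} \sum_{i=1}^{n} \|P_{it} x\|^2 = \Bigl(2^k \|\phi\| \prod_{j=1}^{k} \|a_j\|\Bigr)^{\!2} \|x\|^2.
\end{equation*}
Therefore $\bigl\|\sum_{i} P_{it} S_{it}\bigr\| \leq 2^k \|\phi\| \prod_{j} \|a_j\|$ uniformly in $t$, and passing to the weak operator limit preserves this bound, giving $\|\phi^{(n)}(a_1, \ldots, a_k)\| \leq 2^k \|\phi\| \prod_{j} \|a_j\|$. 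Taking the supremum over $n$ completes the argument. The principal technical point to verify carefully is the transfer of hypotheses from $(\mathcal{M}, \mathcal{R})$ to $(M_n(\mathcal{M}), M_n(\mathcal{R}))$ at the start—in particular, that the diagonal projections $P_{it}$ asymptotically commute with the matrices $a_j$ in $\|\cdot\|_{2,\widetilde\rho}$, and that $\phi^{(n)}$ is genuinely separately normal and $M_n(\mathcal{R})$-multimodular—so that Lemmas \ref{5.1}, \ref{5.2}, and Corollary \ref{4.3} apply at the matrix level exactly as stated.
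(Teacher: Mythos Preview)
Your proof is correct and follows essentially the same approach as the paper's: both lift the projections diagonally to $M_n(\mathcal{R})$, decompose $\phi^{(n)}(a_1,\ldots,a_k)$ via Lemma~\ref{5.1}, kill the commutator terms with Lemma~\ref{5.2}, and bound the surviving block-diagonal term using Corollary~\ref{4.3}. The only cosmetic difference is that the paper bounds $\bigl\|\sum_i P_{it}\,\phi^{(n)}(a_1P_{it},\ldots,a_kP_{it})\,P_{it}\bigr\|$ directly as a maximum over orthogonal corners, whereas you reach the same estimate by a Hilbert-space Pythagoras argument; both are equivalent.
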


\begin{proof}
The proof is similar to the one for  Theorem 6.3 in \cite{EFAR2} and is
sketched here for the purpose of completeness.

Fix $n \in \mathbb{N}$ and $k$ elements $b_{1}, b_{2}, \dots, b_{k} \in M_{n}(\mathcal{M})$.

By condition (II), we can find a family of projections $\{ q_{it} : 1 \leq i \leq n; t \in \mathbb{N}\}$ in $\mathcal{R}$ such that
\begin{enumerate}
\item [(a)] for each $t \in \mathbb{N}$, $q_{1t}, \dots, q_{nt}$ are $n$ orthogonal equivalent projections in $\mathcal{R}$ with sum $I$;

\item [(b)] $\lim\limits_{t \to \infty}\left\Vert q_{it}a-aq_{it} \right\Vert_{2} = 0$ for any $a \in \mathcal{M}, 1 \leq i \leq n$.
\end{enumerate}
\noindent Let $q'_{it}= I_{n} \otimes q_{it} \in M_{n}(\mathcal{R})$ for each $i$ and $t$. We obtain that
\begin{enumerate}
\item [(a')] for each $t \in \mathbb{N}$, $q'_{1t}, \dots, q'_{nt}$ are $n$ orthogonal equivalent projections in $M_{n}(\mathcal{R})$ with sum $I_{n} \otimes I$;

\item [(b')] $\lim\limits_{t \to \infty}\left\Vert q'_{it}b-bq'_{it} \right\Vert_{2} = 0$ for any $b \in M_{n}(\mathcal{M}), 1 \leq i \leq n$.
\end{enumerate}
Since $\phi$ is an $\mathcal{R}$-multimodular map, $\phi^{(n)}$ is
an $M_{n}(\mathcal{R})$-multimodular map. Assume that $\mathcal M$
acts on a Hilbert space $H$.  For any two unit vectors $x, y$ in
$H^{n}$ and any $t \in \mathbb{N}$, by Lemma \ref{5.1},
\begin{eqnarray*}
&&\langle \phi^{(n)}(b_{1}, \dots, b_{k})x, y \rangle\\
&=& \langle \sum\limits_{i=1}^{n} q'_{it} \phi^{(n)} (b_{1}, \dots, b_{k})x, y \rangle\\
&=&\langle \sum\limits_{i=1}^{n} \sum\limits_{\sigma \in S_{k}} (-1)^{\vert \sigma \vert +1}
q'_{it} \phi^{(n)}_{\sigma, q'_{it}}(b_{1}, \dots, b_{k})x, y \rangle+\langle \sum\limits_{i=1}^{n}q'_{it} \phi^{(n)}(b_{1}q'_{it}, \dots, b_{k}q'_{it})x, y \rangle\\
&=&\langle \sum\limits_{i=1}^{n} \sum\limits_{\sigma \in S_{k}} (-1)^{\vert \sigma \vert +1} q'_{it} \phi^{(n)}_{\sigma, q'_{it}}(b_{1}, \dots, b_{k})x, y \rangle+\langle \sum\limits_{i=1}^{n}q'_{it} \phi^{(n)}(b_{1}q'_{it}, \dots, b_{k}q'_{it})q'_{it}x, y \rangle.
\end{eqnarray*}

Therefore
\begin{eqnarray}
&&\langle \phi^{(n)}(b_{1}, \dots, b_{k})x, y \rangle - \langle \sum\limits_{i=1}^{n} \sum\limits_{\sigma \in S_{k}} (-1)^{\vert \sigma \vert +1} q'_{it} \phi^{(n)}_{\sigma, q'_{it}}(b_{1}, \dots, b_{k})x, y \rangle \notag \\
&=&\langle \sum\limits_{i=1}^{n}q'_{it} \phi^{(n)}(b_{1}q'_{it}, \dots, b_{k}q'_{it})q'_{it}x, y \rangle. \label{74}
\end{eqnarray}

Since $\{ q'_{1t}, \dots, q'_{nt} \}$ is a set of $n$ orthogonal projections for each $t \in \mathbb{N}$, by Corollary \ref{4.3},
\begin{eqnarray}
\Vert \sum\limits_{i=1}^{n} q'_{it} \phi^{(n)}(b_{1}q'_{it}, \dots, b_{k}q'_{it})q'_{it} \Vert &\leq& \max\limits_{1 \leq i \leq n} \Vert q'_{it} \phi^{(n)}(b_{1}q'_{it}, \dots, b_{k}q'_{it})q'_{it} \Vert \notag \\
&\leq& 2^{k} \Vert \phi \Vert \Vert b_{1} \Vert \dots \left\Vert b_{k} \right\Vert. \label{76}
\end{eqnarray}

By Lemma \ref{5.2}, condition (b') implies
\begin{eqnarray}
\lim\limits_{t \to \infty} \langle q'_{it} \phi^{(n)}_{\sigma, q'_{it}}(b_{1}, \dots, b_{k})x, y \rangle = 0 \label{77}
\end{eqnarray}
for each $1 \leq i \leq n$ and $\sigma \in S_{k}$.

Letting $t \to \infty$ for both sides of (\ref{74}), it follows from inequality (\ref{76}) and equation (\ref{77}) that
$$\langle \phi^{(n)}(b_{1}, \dots, b_{k})x, y \rangle \leq 2^{k} \Vert \phi \Vert \Vert b_{1} \Vert \dots \Vert b_{k} \Vert.$$
Since $n, x, y$ were arbitrarily chosen, $\Vert \phi \Vert_{cb} \leq 2^{k} \Vert \phi \Vert$.
\end{proof}

The following is the main result of the paper.

\begin{theorem}\label{mainthm}
If $\mathcal{M}$ is a type II$_{1}$ von Neumann algebra with
separable predual and Property $\Gamma$, then
the Hochschild cohomology group
$$H^{k}(\mathcal{M}, \mathcal{M})=0, \qquad \forall \ k \geq 2.$$
\end{theorem}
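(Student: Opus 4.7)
The plan is to combine the structural result of Theorem \ref{3.13} with the complete boundedness estimate of Theorem \ref{5.3}, and then feed the outcome into the general reduction machinery of Sinclair and Smith. By Proposition A.2.1 of \cite{JS} we may assume $\mathcal{M}$ acts on a separable Hilbert space $H$.

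First, invoke Theorem \ref{3.13} to produce a hyperfinite type II$_1$ subfactor $\mathcal{R}\subseteq\mathcal{M}$ satisfying $\mathcal{R}'\cap\mathcal{M}=\mathcal{Z}$ together with the asymptotic commuting property (II): for every finite list $a_1,\dots,a_m\in\mathcal{M}$ and every $n\in\mathbb{N}$, there is a sequence of systems of $n$ orthogonal equivalent projections in $\mathcal{R}$ summing to $I$ that asymptotically commutes with the $a_j$ in $\|\cdot\|_2$. This is exactly the subfactor to which our main complete-boundedness theorem applies.

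Next, by Theorem 3.1.1 of \cite{AR3}, the computation of $H^k(\mathcal{M},\mathcal{M})$ is unchanged if one restricts to separately normal $k$-cocycles; and by the Christensen--Sinclair averaging procedure (Chapter 3 of \cite{AR3}) one may further modify any such cocycle by a coboundary so that the resulting cocycle is $\mathcal{R}$-multimodular, using the crucial identity $\mathcal{R}'\cap\mathcal{M}=\mathcal{Z}$ to complete the modification. At this point Theorem \ref{5.3} applies directly to the reduced cocycle $\phi:\mathcal{M}^k\to\mathcal{M}\subseteq B(H)$, yielding $\|\phi\|_{cb}\le 2^k\|\phi\|<\infty$, so $\phi$ represents a class in $H^k_{cb}(\mathcal{M},\mathcal{M})$. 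By Theorem 4.3.1 of \cite{AR3} (the vanishing of completely bounded Hochschild cohomology for any von Neumann algebra, for $k\ge 2$), this class is trivial, and hence $\phi$ is a coboundary in the bounded Hochschild complex. Therefore $H^k(\mathcal{M},\mathcal{M})=0$ for every $k\ge 2$.

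The main work has already been carried out in the preceding sections: producing a hyperfinite subfactor $\mathcal{R}$ that simultaneously satisfies $\mathcal{R}'\cap\mathcal{M}=\mathcal{Z}$ and the asymptotic commuting property (this is where Property $\Gamma$ enters, via Proposition \ref{3.7} and the direct-integral/analytic-selection arguments of Section 4), and then establishing the Grothendieck-type complete boundedness estimate of Theorem \ref{5.3} in the multimodular setting. Granted these two inputs, the proof of Theorem \ref{mainthm} is a short formal combination with Theorems 3.1.1 and 4.3.1 of \cite{AR3}, with no further calculations required.
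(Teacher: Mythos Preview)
Your proposal is correct and follows essentially the same approach as the paper: invoke Theorem \ref{3.13} to obtain the hyperfinite subfactor $\mathcal{R}$, reduce via Theorem 3.1.1 of \cite{AR3} to $\mathcal{R}$-multimodular separately normal cocycles, apply Theorem \ref{5.3} to get complete boundedness, and conclude via Theorem 4.3.1 of \cite{AR3}. One minor remark: the identity $\mathcal{R}'\cap\mathcal{M}=\mathcal{Z}$ is not actually needed for the averaging reduction to $\mathcal{R}$-multimodular cocycles (hyperfiniteness of $\mathcal{R}$ suffices there); it enters only through Theorem \ref{5.3} via the Grothendieck-type estimates of Lemmas \ref{4.1} and \ref{4.2}.
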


\begin{proof}
By Theorem \ref{3.13}, there is a hyperfinite type II$_{1}$
subfactor  $\mathcal{R}$ of $\mathcal{M}$ satisfying conditions (I)
and (II) in Theorem \ref{5.3}.

Now consider the cohomology groups $H^{k}(\mathcal{M},
\mathcal{M})$. By Theorem 3.1.1 in \cite{AR3}, it suffices to
consider a $k$-linear $\mathcal{R}$-multimodular separately normal
cocycle $\phi$. Theorem 5.3 shows that such cocycles are completely
bounded. By Theorem 4.3.1 in \cite{AR3}, completely bounded
Hochschild cohomology groups are trivial. It follows that $\phi$ is
a coboundary, whence $H^{k}(\mathcal{M}, \mathcal{M})=0, k \geq 2$.
\end{proof}

The next result in \cite{EGA} follows directly from Theorem
\ref{mainthm} and Example \ref{example 1}.
\begin{corollary}\label{mainthm2}
Suppose that $\mathcal{M}_1$ is a type II$_{1}$ von Neumann algebra
with separable predual and $\mathcal{M}_2$ is a type II$_{1}$ factor
with separable predual. If $\mathcal M_2$ has Property $\Gamma$,
then  the Hochschild cohomology group
$$H^{k}(\mathcal{M}_1\otimes \mathcal{M}_2, \mathcal{M}_1\otimes \mathcal{M}_2)=0,  \qquad  k \geq
2.$$ In particular, if $\mathcal M$ is a type II$_{1}$ von Neumann
algebra with separable predual satisfying $\mathcal M \cong \mathcal
M\otimes \mathcal R$, where $\mathcal R$ is the hyperfinite II$_1$
factor, then $$H^{k}(\mathcal{M}, \mathcal{M})=0, \qquad  \forall \ k \geq
2.$$
\end{corollary}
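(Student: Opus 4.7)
The plan is to deduce this corollary as an essentially immediate consequence of Example \ref{example 1} and Theorem \ref{mainthm}. First, I would verify that the tensor product $\mathcal{M}_1 \otimes \mathcal{M}_2$ falls under the hypothesis of Theorem \ref{mainthm}. Since $\mathcal{M}_1$ is a type II$_1$ von Neumann algebra and $\mathcal{M}_2$ is a type II$_1$ factor, the von Neumann algebra tensor product $\mathcal{M}_1 \otimes \mathcal{M}_2$ is a type II$_1$ von Neumann algebra (this is standard, e.g., from section 11.2 in \cite{KR1}), and it has separable predual since both factors do. Next, Property $\Gamma$ is obtained by invoking Example \ref{example 1} with the roles arranged so that the Property $\Gamma$ factor plays the role of $\mathcal{A}_1$: take $\mathcal{A}_1 = \mathcal{M}_2$ (a type II$_1$ factor with Property $\Gamma$ and separable predual) and $\mathcal{A}_2 = \mathcal{M}_1$ (a finite, in fact II$_1$, von Neumann algebra with separable predual). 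Example \ref{example 1} then asserts that $\mathcal{M}_2 \otimes \mathcal{M}_1$, and hence $\mathcal{M}_1 \otimes \mathcal{M}_2$ up to the canonical flip $*$-isomorphism, is a type II$_1$ von Neumann algebra with separable predual and Property $\Gamma$.

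With both hypotheses verified, I would apply Theorem \ref{mainthm} directly to conclude that
\[
H^k(\mathcal{M}_1 \otimes \mathcal{M}_2, \mathcal{M}_1 \otimes \mathcal{M}_2) = 0 \qquad \forall \, k \geq 2.
\]

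For the second assertion, I would observe that the hyperfinite type II$_1$ factor $\mathcal{R}$ has separable predual and satisfies Murray and von Neumann's Property $\Gamma$; by Corollary \ref{3.3.5} (together with Remark \ref{3.3.6}), this coincides with Property $\Gamma$ in the sense of Definition \ref{3.1}. Given a type II$_1$ von Neumann algebra $\mathcal{M}$ with separable predual satisfying $\mathcal{M} \cong \mathcal{M} \otimes \mathcal{R}$, I would apply the first part of the corollary with $\mathcal{M}_1 = \mathcal{M}$ and $\mathcal{M}_2 = \mathcal{R}$ to obtain $H^k(\mathcal{M} \otimes \mathcal{R}, \mathcal{M} \otimes \mathcal{R}) = 0$ for $k \geq 2$. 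Transporting this vanishing across the $*$-isomorphism $\mathcal{M} \cong \mathcal{M} \otimes \mathcal{R}$ (which induces an isomorphism on continuous Hochschild cohomology) yields $H^k(\mathcal{M}, \mathcal{M}) = 0$ for all $k \geq 2$.

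There is really no substantive obstacle here; the work has been done in establishing Example \ref{example 1} (which propagates Property $\Gamma$ under tensoring with an arbitrary finite von Neumann algebra with separable predual) and Theorem \ref{mainthm}. The only minor point to be careful about is that the definition of Property $\Gamma$ used in Theorem \ref{mainthm} is the one from Definition \ref{3.1}, so the Property $\Gamma$ of $\mathcal{R}$ must be interpreted in that sense; this is guaranteed by Corollary \ref{3.3.5} since $\mathcal{R}$ is a II$_1$ factor. The rest is bookkeeping.
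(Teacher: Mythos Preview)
Your proposal is correct and follows essentially the same approach as the paper, which simply states that the result follows directly from Theorem \ref{mainthm} and Example \ref{example 1}. Your write-up just makes explicit the routine verifications (separable predual, type II$_1$, the flip isomorphism, and that $\mathcal{R}$ has Property $\Gamma$) that the paper leaves implicit.
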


\end{document}